\newtheorem{thm}{Theorem}[section]
\newtheorem{lem}[thm]{Lemma}
\theoremstyle{definition}
\theoremstyle{remark}
\numberwithin{equation}{section}
\begin{document}

\title[Sharp reversed HLS inequality]{Sharp reversed Hardy-Littlewood-Sobolev inequality with extended kernel}

\author{Wei Dai, Yunyun Hu, Zhao Liu}

\address{School of Mathematical Sciences, Beihang University (BUAA), Beijing 100083, P. R. China}
\email{weidai@buaa.edu.cn}

\address{Department of Mathematical Sciences, Yeshiva University, New York, NY 10033 USA}
\email{yhu2@mail.yu.edu}

\address{School of Mathematics and Computer Science, Jiangxi Science and Technology Normal University, Nanchang 330038, P. R. China}
\email{liuzhao@mail.bnu.edu.cn}

\thanks{Corresponding author: Zhao Liu at liuzhao@mail.bnu.edu.cn. \\
Wei Dai is supported by the NNSF of China (No. 11971049) and the Fundamental Research Funds for the Central Universities, Zhao Liu is supported by the NNSF of China (No. 11801237, 11926324).}

\date{}

\begin{abstract}
In this paper, we prove the following reversed Hardy-Littlewood-Sobolev inequality with extended kernel
\begin{equation*}
\int_{\mathbb{R}_+^n}\int_{\partial\mathbb{R}^n_+} \frac{x_n^\beta}{|x-y|^{n-\alpha}}f(y)g(x) dydx\geq
C_{n,\alpha,\beta,p}\|f\|_{L^{p}(\partial\mathbb{R}_+^n)}
\|g\|_{L^{q'}(\mathbb{R}_+^n)}
\end{equation*}
for any nonnegative functions $f\in L^{p}(\partial\mathbb{R}_+^n)$ and $g\in L^{q'}(\mathbb{R}_+^n)$, where $n\geq2$, $p,\ q'\in (0,1)$, $\alpha>n$, $0\leq\beta<\frac{\alpha-n}{n-1}$, $p>\frac{n-1}{\alpha-1-(n-1)\beta}$ such that $\frac{n-1}{n}\frac{1}{p}+\frac{1}{q'}-\frac{\alpha+\beta-1}{n}=1$. We prove the existence of extremal functions for the above inequality. Moreover, in the conformal invariant case, we classify all the extremal functions and hence derive the best constant via a variant method of moving spheres, which can be carried out \emph{without lifting the regularity of Lebesgue measurable solutions}. Finally, we derive the sufficient and necessary conditions for existence of positive solutions to the Euler-Lagrange equations by using Pohozaev identities. Our results are inspired by Hang, Wang and Yan \cite{HWY}, Dou, Guo and Zhu \cite{DGZ} for $\alpha<n$ and $\beta=1$, and Gluck \cite{Gl} for $\alpha<n$ and $\beta\geq0$.
\end{abstract}

\maketitle {\small {\bf Keywords:} Existence of extremal functions; Euler-Lagrange equations; Pohozaev identity; Hardy-Littlewood-Sobolev inequality.\\

{\bf 2010 MSC.} 42B25; 35A23; 42B37.}

\section{Introduction}
The classical Hardy-Littlewood-Sobolev inequality that was obtained by Hardy and Littlewood \cite{Hardy}
for $n = 1$ and by Sobolev \cite{Sobolev} for general $n$ states that
\begin{equation}\label{HL1}
\int_{\mathbb{R}^n}\int_{\mathbb{R}^n}|x-y|^{-(n-\alpha)} f(x)g(y)dxdy\leq C_{\alpha,n,p}\|f\|_{L^{p}(\mathbb{R}^n)}\|g\|_{L^{q'}(\mathbb{R}^n)}
\end{equation}
with $1<p,q'<\infty, 0<\alpha<n$ and $\frac{1}{p}+\frac{1}{q'}+\frac{n-\alpha}{n}=2$.

\medskip

This inequality has become an important tool in the analysis of PDEs. To find the best constant, the classification of the extremal functions plays a crucial role. When $p=q'=\frac{2n}{n+\alpha}$, the Euler-Lagrange equation of the extremals to the Hardy-Littlewood-Sobolev inequality in Euclidean space is a conformal invariant integral equation. In \cite{Lieb}, Lieb proved the existence of extremal functions for the inequality \eqref{HL1} by employing rearrangement inequalities. In addition, he classified extremal functions and computed the sharp constant when $p$ or $q'$ is equal to $2$, or $p=q'=\frac{2n}{n+\alpha}$.

Inequality \eqref{HL1} implies many important geometrical inequalities, such as the Gross logarithmic Sobolev inequality \cite{Gr}, and the Moser-Onofri-Beckner inequality \cite{Beckner1}. It is also well-known that if one picks $\alpha=2$ and $p=q'=\frac{2n}{n+2}$, then the Hardy-Littlewood-Sobolev inequality is in fact equivalent to the Sobolev inequality by using Green's representation formula. By using the competing symmetry method, Carlen and Loss \cite{CarL} provided a different proof from Lieb's of the sharp constants and extremal functions in the diagonal cases $p=q'=\frac{2n}{n+\alpha}$. By using the reflection positivity of inversions in spheres, Frank and Lieb \cite{FL1} also offered a new proof in some special diagonal cases. In \cite{FL2}, Frank and Lieb further employed a rearrangement-free technique developed in \cite{FL3} to recapture the best constant of inequality \eqref{HL1}.

Folland and Stein \cite{FS} extended the inequality \eqref{HL1} to the Heisenberg group and established the Hardy-Littlewood-Sobolev inequality on Heisenberg group. Frank and Lieb \cite{FL3} classified the extremals of the Hardy-Littlewood-Sobolev inequality on Heisenberg group in the diagonal cases. This extends the earlier work of Jerison and Lee \cite{JL} for sharp constants and extremals for the Sobolev inequality on the Heisenberg group in the conformal case in their study of CR Yamabe problem. Furthermore, Han, Lu and Zhu \cite{HLZ} established the double weighted Hardy-Littlewood-Sobolev inequality (namely, Stein-Weiss inequality) on the Heisenberg group and investigated the regularity and asymptotic behavior of the extremal functions. Recently, Chen, Lu and Tao \cite{CLTao} used the concentration-compactness principle to obtain existence of extremals of the Stein-Weiss inequality on the Heisenberg group for all indices.

Inequality \eqref{HL1} and its extensions have many applications in partial differential equations. Some remarkable extensions have already been obtained on the upper half space by Dou and Zhu \cite{DZ1} and on compact Riemannian manifolds by Han and Zhu \cite{Han}. The reversed (weighted) Hardy-Littlewood-Sobolev inequalities were derived in \cite{CLLT1,DZ2,Ngo1,Ngo2}. For more results concerning the (weighted) Hardy-Littlewood-Sobolev inequality and Hardy-Sobolev equations, please refer to \cite{Beckner2,Brascamp,CD,Chen,CL1,CL2,CJLL,CLZ1,CLZ2,DFHQW,DL,DL2,DLQ,LL,Liu,L,Lu,SW} and the references therein.

\medskip

Closely related to the Hardy-Littlewood-Sobolev inequality, Gluck \cite{Gl} recently established the following half-space version of sharp Hardy-Littlewood-Sobolev inequality involving general kernels (extended kernels) in the conformal invariant case $p=\frac{2(n-1)}{n+\alpha-2}$ and $q'=\frac{2n}{n+\alpha+2\beta}$:
\begin{equation}\label{equality00}
\Big|\int_{\mathbb{R}_+^n}\int_{\partial\mathbb{R}^n_+} K_{\alpha,\beta}(x'-y',x_n)f(y)g(x) dydx\Big|\leq C_{n,\alpha,\beta}
\|f\|_{L^p(\partial\mathbb{R}_+^n)}\|g\|_{L^{q'}(\mathbb{R}_+^n)},
\end{equation}
where the kernel
$$K_{\alpha,\beta}(x',x_n):=\frac{x_n^\beta}{(|x'|^2+x_n^2)^{\frac{n-\alpha}{2}}}, \quad  x=(x',x_n)\in \mathbb{R}^{n-1}\times (0,\infty)$$
with $\beta\geq0$, $0<\alpha+\beta<n-\beta$,
\begin{equation}\label{tiaojian}
\frac{n-\alpha-2\beta}{2n}+\frac{n-\alpha}{2(n-1)}<1,
\end{equation}
and
$$\mathbb{R}^{n}_{+}=\{x=(x_1,\cdots,x_n)\in \mathbb{R}^n \mid x_n>0\}.$$

In the special case $\alpha=0$ and $\beta=1$, $K_{\alpha,\beta}$ becomes the classical Poisson kernel, for which Hang, Wang and Yan \cite{HWY} derived the Hardy-Littlewood-Sobolev inequality and obtained the existence of extremals through a concentration-compactness principle. For the conformal invariant case, they classified the extremal functions of the inequality, and thus computed the sharp constant.

For $\alpha\in (0,1)$, $\beta=1-\alpha$, the kernel $K_{\alpha,\beta}$ is related to the divergence form operator $u\mapsto div(x_n^\alpha \nabla u)$
(the harmonic extension operator) on the half space. Chen \cite{Chens} established the sharp Hardy-Littlewood-Sobolev inequality \eqref{equality00} in such cases. He also generalized Carleman's inequality for harmonic functions in the plane to poly-harmonic functions in higher dimensions.

When $\beta=0$, Dou and Zhu \cite{DZ1} studied the sharp Hardy-Littlewood-Sobolev inequality on upper half spaces and obtained the existence of extremal functions.

When $\beta=1$, Dou, Guo and Zhu \cite{DGZ} investigated the integral inequality \eqref{equality00} for some special indices through the methods based on conformal transformations. Subsequently, Chen, Lu and Tao \cite{CLT} derived the Hardy-Littlewood-Sobolev inequality for all critical indices when $$\frac{n-1}{n}\frac{1}{p}+\frac{1}{q}-\frac{\alpha+\beta-1}{n}=1.$$
Furthermore, Chen, Liu, Lu and Tao \cite{CLLT2} extended it to the weighted Hardy-Littlewood-Sobolev inequality.

In the case $\alpha+\beta>1$, Liu \cite{Liu2} generalized the Hardy-Littlewood-Sobolev inequality with general kernel in the conformal invariant case for all critical indices.

\medskip

In this paper, we prove the reversed Hardy-Littlewood-Sobolev inequality with extended kernel in the half space.

\begin{thm}\label{theorem1}
Let $n\geq2$, $p, q'\in (0,1)$, $\alpha>n$, $0\leq\beta<\frac{\alpha-n}{n-1}$, $p>\frac{n-1}{\alpha-1-(n-1)\beta}$ and suppose that $\alpha$, $\beta$, $p$, $q'$ satisfy
$$\frac{n-1}{n}\frac{1}{p}+\frac{1}{q'}-\frac{\alpha+\beta-1}{n}=1.$$
Then, there is a constant $C_{n,\alpha,\beta,p}>0$ such that for any nonnegative functions $f\in L^p(\partial\mathbb{R}_+^n)$ and $g\in L^{q'}(\mathbb{R}_+^n)$,
\begin{equation}\label{equality}
\int_{\mathbb{R}_+^n}\int_{\partial\mathbb{R}^n_+} \frac{x_n^\beta }{|x-y|^{n-\alpha}} f(y)g(x) dydx\geq
C_{n,\alpha,\beta,p}\|f\|_{L^p(\partial\mathbb{R}_+^n)}\|g\|_{L^{q'}(\mathbb{R}_+^n)}.
\end{equation}
\end{thm}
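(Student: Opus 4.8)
The plan is to deduce the inequality \eqref{equality} from a linear ``reversed'' estimate via the reverse H\"older inequality, and then to prove that linear estimate by a region-by-region analysis of a weighted Riesz-type potential. By the homogeneity of \eqref{equality} (the balance condition $\frac{n-1}{n}\frac1p+\frac1{q'}-\frac{\alpha+\beta-1}{n}=1$ being exactly its dilation invariance under $f(x)\mapsto f(\lambda x)$, $g(x)\mapsto g(\lambda x)$), it suffices to treat $f$ with $\|f\|_{L^p(\partial\mathbb{R}^n_+)}=1$. Writing the left-hand side of \eqref{equality} as $\int_{\mathbb{R}^n_+}(T_\beta f)(x)\,g(x)\,dx$, where
$$T_\beta f(x):=x_n^\beta\int_{\partial\mathbb{R}^n_+}|x-y|^{\alpha-n}f(y)\,dy\qquad(\alpha>n),$$
and applying the reverse H\"older inequality on $\mathbb{R}^n_+$ (valid because $q'\in(0,1)$, with conjugate exponent $q:=q'/(q'-1)<0$), one gets $\int_{\mathbb{R}^n_+}(T_\beta f)\,g\,dx\ge\|T_\beta f\|_{L^q(\mathbb{R}^n_+)}\|g\|_{L^{q'}(\mathbb{R}^n_+)}$. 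Since $q<0$, the bound $\|T_\beta f\|_{L^q(\mathbb{R}^n_+)}\ge C$ is equivalent to the uniform estimate
$$\int_{\mathbb{R}^n_+}\bigl(T_\beta f(x)\bigr)^{q}\,dx\ \le\ C'\qquad\text{whenever }f\ge0,\ \|f\|_{L^p(\partial\mathbb{R}^n_+)}=1 .$$
(If $f\notin L^1(\partial\mathbb{R}^n_+)$ then $T_\beta f\equiv+\infty$ because $\alpha>n$, and \eqref{equality} is trivial, so one may assume $f\in L^1$.)

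The core is a pair of complementary pointwise lower bounds for $T_\beta f$. Using $|x-y|=\bigl(|x'-y'|^2+x_n^2\bigr)^{1/2}\ge\tfrac12\bigl(|x'-y'|+x_n\bigr)$ and $(s+t)^{\alpha-n}\ge\max\{s^{\alpha-n},t^{\alpha-n}\}$, one obtains
$$T_\beta f(x)\ \gtrsim\ x_n^{\beta}\int_{\mathbb{R}^{n-1}}|x'-y'|^{\alpha-n}f(y')\,dy'\ +\ x_n^{\,\alpha-n+\beta}\,\|f\|_{L^1(\partial\mathbb{R}^n_+)},$$
which controls $T_\beta f$ where $x_n$ is large or where $x'$ lies far from the bulk of $f$; this is exactly where the growth of the kernel, $\alpha>n$, is essential. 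Where $x_n$ is small and $x'$ is close to the effective support of $f$, one instead applies the reverse H\"older inequality to the inner integral (restricted to a fixed-size neighbourhood of $x'$) with exponents $p\in(0,1)$ and $p'=p/(p-1)<0$: the hypothesis $p>\frac{n-1}{\alpha-1-(n-1)\beta}$, which in particular forces $p>\frac{n-1}{\alpha-1}$, is exactly what makes $\int_{\mathbb{R}^{n-1}}|x-y|^{(\alpha-n)p'}\,dy$ converge, and it produces a bound of the form $T_\beta f(x)\gtrsim\|f\|_{L^p}\,x_n^{\,\alpha+\beta-1-(n-1)/p}$.

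One then feeds these bounds into $\int_{\mathbb{R}^n_+}(T_\beta f)^q\,dx$, splitting $\mathbb{R}^n_+$ into $\{x_n\le1\}$ and $\{x_n>1\}$ and, inside each, according to the position of $x'$ relative to the set carrying most of the $L^p$-mass of $f$. Every resulting piece reduces to the convergence of an explicit power integral; writing $\theta:=\alpha+\beta-1-\frac{n-1}{p}>0$, so that $|q|=n/\theta$, all of them converge precisely because
$$\beta\,|q|<1,\qquad (\alpha-n)\,|q|>n-1,\qquad (\alpha-n+\beta)\,|q|>1 .$$
The first of these is equivalent to the hypothesis $p>\frac{n-1}{\alpha-1-(n-1)\beta}$ (integrability of $x_n^{\beta q}$ near $\partial\mathbb{R}^n_+$); the second follows from $\beta<\frac{\alpha-n}{n-1}$ together with $p<1$ (decay in $x'$ at infinity); and the third is automatic from $\alpha+\beta+\frac1p>n+1$ (decay in $x_n$ at infinity). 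This establishes the uniform estimate above, and hence \eqref{equality}.

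The step I expect to be the main obstacle is exactly this synthesis. Because $p<1$, a function $f$ with $\|f\|_{L^p}=1$ may be arbitrarily concentrated or arbitrarily spread out, so no single pointwise lower bound for $T_\beta f$ can work; one must patch together the ``growing-kernel'' estimate and the localized reverse-H\"older estimate while keeping careful track of the boundary weight $x_n^\beta$ near $\partial\mathbb{R}^n_+$. The sharpness of each of the three numerical conditions above is not an accident: it reflects the conformal ``bubble'' profiles, for which $\int_{\mathbb{R}^n_+}(T_\beta f)^q\,dx$ is scale-invariant, so a successful argument must engage with these thresholds rather than circumvent them.
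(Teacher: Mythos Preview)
Your route is genuinely different from the paper's. The paper does \emph{not} attempt the strong-type bound $\int_{\mathbb{R}^n_+}(Tf)^q\,dx\le C$ directly; it proves only a weak-type estimate and then invokes the reversed Marcinkiewicz interpolation theorem (Proposition~2.5 in \cite{DZ2}). For the weak-type estimate the paper uses, besides your global bound (B) $Tf(x)\gtrsim x_n^{\theta}$ (which confines the sublevel set $\{Tf<\lambda\}$ to a strip in $x_n$), the reversed Young inequality on each slice $\{x_n=\text{const}\}$: since $Tf(\cdot,x_n)=K_{x_n}\ast f$ is a convolution on $\mathbb{R}^{n-1}$, one has $\|Tf(\cdot,x_n)\|_{L^r(\mathbb{R}^{n-1})}\le\|K_{x_n}\|_{L^s}\|f\|_{L^p}$ for suitable $r,s<0$, and this handles the $x'$-direction \emph{uniformly in $f$}. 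That uniformity is the one ingredient your proposal lacks.

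Concretely, the region-by-region synthesis you describe cannot close. On the piece $\{x_n\le1,\ x'\ \text{near the bulk of }f\}$ your bound (B) gives only $(Tf)^q\lesssim x_n^{\theta q}=x_n^{-n}$, whose $x_n$-integral over $(0,1)$ diverges; and your bound (A), $Tf(x)\gtrsim x_n^{\beta}M(x')$ with $M(x')=\int_{\mathbb{R}^{n-1}}|x'-y'|^{\alpha-n}f(y')\,dy'$, is useless there because $M(x')$ carries no uniform positive lower bound: when $f$ concentrates near $x'$ the kernel $|x'-y'|^{\alpha-n}$ vanishes on its support. (The reverse H\"older step that produces $x_n^{\theta}$ works only because the $+x_n^2$ in $|x-y|$ regularises the $p'$-norm of the kernel; the analogous bound for $M$ alone diverges.) If instead one \emph{combines} the two lower bounds and integrates in $x_n$ first, one obtains
\[
\int_0^{\infty}(Tf)^q\,dx_n\ \lesssim\ M(x')^{\tilde q},\qquad \tilde q=\frac{(n-1)p}{(n-1)-(\alpha-1)p}<0,
\]
and the remaining task is to bound $\int_{\mathbb{R}^{n-1}}M(x')^{\tilde q}\,dx'$ uniformly over $f$ with $\|f\|_{L^p}=1$. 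But this is \emph{exactly} the reversed Hardy--Littlewood--Sobolev inequality of Dou--Zhu \cite{DZ2} on $\mathbb{R}^{n-1}$ (the case $\beta=0$ in one lower dimension, with the matching critical exponent), not the ``convergence of an explicit power integral'' you claim. So as written your argument is circular; it can be rescued by citing \cite{DZ2}, but the paper's use of reversed Young is precisely the device that supplies the missing $x'$-uniformity without such a reduction.
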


\medskip

Define
\begin{equation}\label{operator}
  Tf(x):=\int_{\partial\mathbb{R}^n_+}\frac{x_n^\beta}{|x-y|^{n-\alpha}}f(y)dy.
\end{equation}

Throughout this paper, we always assume that $q$ and $q'$ are conjugate numbers. That is, $q$ and $q'$ satisfy $\frac{1}{q}+\frac{1}{q'}=1$. By duality, it is easy to verify that the inequality \eqref{equality} is equivalent to the following inequality:
\begin{equation}\label{dengjiabudengshi}
\|Tf\|_{L^{q}(\mathbb{R}^n_+)}\geq C_{n,\alpha,\beta,p}\|f\|_{L^{p}(\partial\mathbb{R}^n_+)}.
\end{equation}

\medskip

Once we have established the revered Hardy-Littlewood-Sobolev inequality with extended kernel, it is natural to ask whether the extremal functions for inequality \eqref{equality} exist or not. To answer this question, we turn to consider the following minimizing problem:
\begin{equation}\label{max}
C^{*}_{n,\alpha,\beta,p}:=\inf\left\{\|Tf\|_{L^q(\mathbb{R}^n_+)} \,\big|\, \|f\|_{L^p(\partial\mathbb{R}^n_+)}=1, \, f\geq 0\right\},
\end{equation}
where $p$ and $q$ satisfy
$$\frac{1}{q}=\frac{n-1}{n}\left(\frac{1}{p}-\frac{\alpha+\beta-1}{n-1}\right), \quad \beta q+1>0.$$

\medskip

It is easy to see that the extremals of inequality \eqref{dengjiabudengshi} solve the minimizing problem \eqref{max}. We will use the rearrangement inequality to prove the attainability of minimizers for minimizing problem \eqref{max}.

\begin{thm}\label{theorem2}
There exists a function $f\in L^p(\partial \mathbb{R}^n_+)$ satisfying $f\geq0$ and $\|f\|_{L^p(\partial \mathbb{R}^n_+)}=1$, such that $\|Tf\|_{L^q( \mathbb{R}^n_+)}=C^*_{n,\alpha,\beta,p}$.
\end{thm}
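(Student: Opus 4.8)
The plan is to establish attainability of the infimum in \eqref{max} via a direct-method argument organized around symmetric rearrangement, exactly as for the classical (reversed) Hardy--Littlewood--Sobolev inequality. First I would observe that since the kernel $\frac{x_n^\beta}{|x-y|^{n-\alpha}}$ is increasing in $|x-y|$ (because $\alpha>n$), the operator $T$ interacts favorably with rearrangement: if $f^\ast$ denotes the symmetric decreasing rearrangement of $f$ on $\partial\mathbb{R}^n_+\cong\mathbb{R}^{n-1}$ (with the radial variable centered at the origin) and we correspondingly consider $Tf$ as a function that, for each fixed height $x_n>0$, should be compared with its rearrangement in $x'$, then a Riesz-type rearrangement inequality should give $(Tf)(x',x_n)\le$ (up to the natural comparison) $(Tf^\ast)(x',x_n)$ in an appropriate pointwise/integrated sense, hence $\|Tf^\ast\|_{L^q}\ge\|Tf\|_{L^q}$ since $q<0$ makes the $L^q$ "norm" \emph{decreasing} under pointwise increase — wait, one must be careful with the sign of $q$; here $q'\in(0,1)$ forces $q<0$, so $\|\cdot\|_{L^q}$ reverses the usual monotonicity, which is precisely why the inequality \eqref{equality} is \emph{reversed}. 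Thus rearranging $f$ upward decreases $\|Tf\|_{L^q}$ while preserving $\|f\|_{L^p}$ (again $p<1$, so $\|\cdot\|_{L^p}$ is also a reversed functional, and rearrangement is an isometry for it), so we may restrict the minimization to nonnegative, radially symmetric, nonincreasing $f$ on $\partial\mathbb{R}^n_+$.

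Next I would take a minimizing sequence $\{f_k\}$ of such radial decreasing functions with $\|f_k\|_{L^p}=1$ and $\|Tf_k\|_{L^q}\to C^\ast_{n,\alpha,\beta,p}$. The key compactness input is that a uniformly $L^p$-bounded sequence of radially decreasing functions on $\mathbb{R}^{n-1}$ is, after passing to a subsequence, pointwise convergent a.e. to some radially decreasing $f\ge 0$ (standard Helly-type selection, since each $f_k$ is monotone in $|y'|$). One must then rule out the two forms of loss of mass — escape of $L^p$-mass to the spatial infinity or concentration at the origin — which in the reversed setting, combined with the dilation and translation covariance forced by the scaling relation $\frac{n-1}{n}\frac{1}{p}+\frac{1}{q'}-\frac{\alpha+\beta-1}{n}=1$, is controlled by normalizing the sequence: using the invariance of the ratio $\|Tf\|_{L^q}/\|f\|_{L^p}$ under the scalings $f\mapsto\lambda^{(n-1)/p}f(\lambda\,\cdot)$, I would fix the "median radius" of $f_k$ (e.g. the radius where the $L^p$-mass is half), so that the limit $f$ is nontrivial. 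Then I would pass to the limit: Fatou's lemma (used in the direction appropriate to $q<0$, i.e. on $\big(T f_k\big)^q$, which is where the reversed inequalities must be applied with care) together with the lower semicontinuity/continuity properties of $T$ on radial monotone functions yields $\|Tf\|_{L^q}\le \liminf\|Tf_k\|_{L^q}=C^\ast_{n,\alpha,\beta,p}$, while $\|f\|_{L^p}\le 1$ by pointwise convergence and Fatou in the $L^p$ (reversed) sense is actually an equality once concentration/vanishing are excluded, giving $\|f\|_{L^p}=1$. Hence $f$ is a minimizer.

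The main obstacle I expect is precisely the interplay between the \emph{reversed} nature of all the functionals ($p,q'\in(0,1)$, $q<0$) and the usual machinery: every monotonicity, every Fatou/Vitali step, and the rearrangement (Riesz) inequality must be invoked in its correct direction, and one must verify that the kernel's growth at infinity (it \emph{grows} like $|x-y|^{\alpha-n}$, $\alpha>n$) does not cause $Tf$ to fail to lie in $L^q(\mathbb{R}^n_+)$ — this is where the hypotheses $\beta<\frac{\alpha-n}{n-1}$, $p>\frac{n-1}{\alpha-1-(n-1)\beta}$ and $\beta q+1>0$ enter, ensuring integrability near $x_n=0$, near the origin, and at infinity. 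A secondary technical point is justifying that the rearrangement of $f$ genuinely decreases $\|Tf\|_{L^q}$: since $Tf(x)$ depends on $x'$ through a convolution-type integral against a radially increasing weight, one applies the Riesz rearrangement inequality slice-by-slice in $x_n$ and then integrates, but the reversed $L^q$ norm means one needs a lower bound on $\int (Tf)^q$, hence an upper bound on $Tf$ pointwise, which the Riesz inequality supplies only after noting that the relevant three-function integrand is handled by the general form of Brascamp--Lieb--Luttinger. Once these sign-bookkeeping and integrability issues are dispatched, the argument is the standard concentration-compactness-free direct method for rearrangement-invariant variational problems.
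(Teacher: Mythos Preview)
Your overall strategy --- reduce to radial nonincreasing $f$ by a Brascamp--Lieb reversed Riesz inequality applied slice-by-slice in $x_n$, take a minimizing sequence, normalize by scaling, and pass to the limit with Fatou --- is exactly the paper's. Two of your steps, however, hide the real work, and the second is a genuine gap.

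First, ``fix the median radius so the limit is nontrivial'' is not by itself a mechanism in the reversed regime. The paper proves a quantitative Lieb-type lemma: if a radial $f$ satisfies $f(y)\le\varepsilon|y|^{-(n-1)/p}$ everywhere, then $\|Tf\|_{L^q}\ge C\,\varepsilon^{1-p/p_1}\|f\|_{L^p}^{p/p_1}$ for any $p_1\in(0,p)$ (proved by passing to logarithmic radial coordinates so that $T$ becomes a one-dimensional convolution, then applying the reversed Young inequality). Since $\|Tf_j\|_{L^q}\to C^*_{n,\alpha,\beta,p}<\infty$, this forces $a_j:=\sup_{\lambda>0}\lambda^{(n-1)/p}f_j(\lambda)\ge 2c_0>0$; only then can one rescale so that $f_j(e_1)\ge c_0$. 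Your median-radius normalization does not by itself yield such a pointwise lower bound, and you would still need an input of this kind.

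Second, and more seriously, the step ``$\|f\|_{L^p}\le 1$ by Fatou, and actually equality once concentration/vanishing are excluded'' is where a missing idea lives. In the reversed setting ($0<p<1$) Fatou gives only $\int f_0^p\le 1$; the opposite inequality requires genuine tightness of $\{f_j^p\}$, which the paper obtains by a separate argument. After normalization one picks two points $x^1,x^2\in\mathbb{R}^n_+$ along which $Tf_j$ stays bounded on a subsequence; the elementary bound $|x^1-x^2|^{\alpha-n}\int_{\partial\mathbb{R}^n_+}f_j\lesssim Tf_j(x^1)+Tf_j(x^2)$ then gives a uniform $L^1$ bound $\int f_j\le C$, and the contribution of an annulus $\{3r/4\le|y|\le r\}$ to $Tf_j(x^1)$ gives the uniform decay $f_j(r)\le C r^{1-\alpha}$ for large $r$. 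These two estimates yield tightness in space (tail control via $p(1-\alpha)+n-1<0$) and in height (control of $\{f_j\ge R\}$ via the $L^1$ bound, using $p<1$); dominated convergence on the truncations $\min\{f_j,R\}\cdot\mathbf{1}_{\{|y|\le R\}}$ then gives $\int f_0^p\ge 1-\varepsilon$ for every $\varepsilon$. This is not ``sign bookkeeping'' --- without these pointwise bounds, extracted from the finiteness of $Tf_j$ at specific points, you have no handle on loss of $L^p$-mass in the limit.
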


The Euler-Lagrange equation for extremal functions to inequality \eqref{dengjiabudengshi}, up to a constant multiplier, is given by
\begin{equation}\label{Eulereq}
f^{p-1}(y)=\int_{\mathbb{R}^n_+}\frac{x_n^\beta}{|x-y|^{n-\alpha}}\left(Tf(x)\right)^{q-1}dx.
\end{equation}

Let
$$u(y):=f^{p-1}(y), \quad v(x):=x_{n}^{-\beta}Tf(x).$$
Denote
$$-\theta:=\frac{1}{p-1}, \quad -k:=q-1.$$
The Euler-Lagrange equation \eqref{Eulereq} can be rewritten as the following integral system:
\begin{equation}\label{integral}\begin{cases}
u(y)=\int_{\mathbb{R}^n_+}\frac{x_n^{\beta(1-k)}}{|x-y|^{n-\alpha}}v^{-k}(x)dx, \quad y\in\partial\mathbb{R}^n_+,\\ \\
v(x)=\int_{\partial\mathbb{R}^n_+}\frac{1}{|x-y|^{n-\alpha}}u^{-\theta}(y) dy, \quad x\in\mathbb{R}^n_+,
\end{cases}\end{equation}
where $\frac{1}{k-1}=\frac{n-1}{n}\left(\frac{\alpha+\beta-n}{n-1}-\frac{1}{\theta-1}\right)$ with $\beta(1-k)+1>0$.

\medskip

The classification of solutions to integral system \eqref{integral} plays a key role in deriving the sharp constant for Hardy-Littlewood-Sobolev inequality \eqref{equality}.

In the case $\beta=0$, Li \cite{L} first classified positive Lebesgue measurable solutions to single integral equation with negative exponents in the whole space. The classification of positive Lebesgue measurable solutions to integral system \eqref{integral} were obtained by Dou and Zhu \cite{DZ2} in the whole space and by Ng\^{o} and Nguyen \cite{Ngo2} in the half space.

\medskip

In all the above papers \cite{L,DZ2,Ngo2}, the authors can improve the regularity of positive Lebesgue measurable solutions to smooth solutions via the standard bootstrap method. In order to start moving the spheres, the regularity should be improved to \emph{(at least) $C^1$}.

Nevertheless, due to the presence of the factor $x_n^\beta$ in the numerator of extended kernels, it is very difficult for us to apply the standard bootstrap method to lift the regularity of Lebesgue measurable solutions so as to carry out the method of moving spheres.

\medskip

In this paper, by applying some new ideas, we obtain the classification of positive Lebesgue measurable solutions for system \eqref{integral} \emph{without lifting the regularity of Lebesgue measurable solutions}. Being different from Li in \cite{L}, we dilate the spheres $\emph{S}_\lambda$ from the neighborhood of singular points to the limiting radius $\bar{\lambda}$. By exploiting the \emph{Spherically Narrow Region Maximum Principle} in integral forms, we use a slight variant of the method of moving spheres to classify all positive Lebesgue measurable solutions to integral system \eqref{integral}. We believe that our methods can be applied to many other integral equations with various kernels.

\medskip

We prove
\begin{thm}\label{theoremfen}
Let $(u,v)$ be a pair of positive Lebesgue measurable solutions on $\partial\mathbb{R}^n_{+}\times \mathbb{R}^n_{+}$ satisfying \eqref{integral}. Assume that
$$k\leq\frac{n+\alpha+2\beta}{\alpha+2\beta-n}, \qquad \theta\leq\frac{n+\alpha-2}{\alpha-n}.$$
Then, $u$ and $v$ must be the form of
\begin{equation*}
u(y)=c_1\left(\frac{d}{1+d^{2}|y-z_0|^2}\right)^{\frac{n-\alpha}{2}}, \qquad v(y,0)=c_2\left(\frac{d}{1+d^{2}|y-z_0|^2}\right)^{\frac{n-\alpha}{2}}, \qquad \forall \, y\in \partial\mathbb{R}^{n}_+
\end{equation*}
for some $z_0\in\partial \mathbb{R}^n_{+}$, $c_1>0$, $c_2>0$ and $d>0$. Furthermore, we have
$$k=\frac{n+\alpha+2\beta}{\alpha+2\beta-n},\ \ \ \theta=\frac{n+\alpha-2}{\alpha-n}.$$
Moreover, in such conformally invariant case, the value of the best constant in \eqref{dengjiabudengshi} is
\begin{equation}\label{sharp}
C^*_{n,\alpha,\beta,p}=\left(n \nu_n\right)^{-\frac{n+\alpha-2}{2(n-1)}}
\left(\int_{ B_1}\left|\int_{\partial B_1}\left(\frac{1-|\xi|^2}{2}\right)^\beta|\eta-\xi|^{\alpha-n}
d\eta\right|^{\frac{2n}{n-\alpha-2\beta}}d\xi\right)^{\frac{n-\alpha-2\beta}{2n}},
\end{equation}
where $\nu_{n}$ denotes the volume of the unit ball in $\mathbb{R}^{n}$.
\end{thm}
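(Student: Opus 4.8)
The plan is to prove Theorem~\ref{theoremfen} in three stages: first classify the solutions via moving spheres, then pin down the exponents $k,\theta$, and finally compute the sharp constant.

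\textbf{Stage 1: Classification via a variant of moving spheres.} For $\lambda>0$ and a point $z\in\partial\mathbb{R}^n_+$ (taken as the origin after translation), introduce the Kelvin-type transforms
\begin{equation*}
u_\lambda(y):=\left(\frac{\lambda}{|y|}\right)^{n-\alpha}u\!\left(\frac{\lambda^2 y}{|y|^2}\right),\qquad
v_\lambda(x):=\left(\frac{\lambda}{|x|}\right)^{n-\alpha}v\!\left(\frac{\lambda^2 x}{|x|^2}\right),
\end{equation*}
noting the inversion $x\mapsto \lambda^2 x/|x|^2$ preserves $\mathbb{R}^n_+$ and $\partial\mathbb{R}^n_+$. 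Using the integral system \eqref{integral} and the change of variables, one derives the comparison identities expressing $u_\lambda-u$ and $v_\lambda-v$ on $B_\lambda$ as integrals over $B_\lambda$ of kernels times differences of the form $v^{-k}-v_\lambda^{-k}$ and $u^{-\theta}-u_\lambda^{-\theta}$, with a strictly positive net kernel (this uses the standard fact that $|x-y|^{-(n-\alpha)}$ with $\alpha>n$ behaves correctly under inversion together with the inequality comparing $|x-y|$, $|x^\lambda-y|$, etc.). Since the exponents $-k,-\theta$ are negative, $t\mapsto t^{-k}$ is decreasing, so the sign structure is set up for a maximum-principle argument. Rather than starting the spheres near the singularity in the usual way, I follow the paper's indicated variant: start with $\lambda$ large — near the ``limiting radius'' — and use the \emph{Spherically Narrow Region Maximum Principle} in integral form to show $u_\lambda\leq u$, $v_\lambda\leq v$ on $B_\lambda$ for $\lambda$ in a suitable range, then decrease $\lambda$ to the critical value $\bar\lambda:=\inf\{\lambda>0: u_\mu\le u,\ v_\mu\le v \text{ on } B_\mu\ \forall \mu\ge\lambda\}$. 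The key dichotomy: either $\bar\lambda=0$ for every choice of center $z$, which forces $u,v$ to be constant (impossible since they are integrable), or $\bar\lambda(z)>0$ for some $z$, in which case a strong-maximum-principle / contradiction argument (perturbing $\lambda$ slightly below $\bar\lambda$ and re-running the narrow-region estimate) gives $u_{\bar\lambda}\equiv u$ and $v_{\bar\lambda}\equiv v$ on $B_{\bar\lambda}$. The functional equation $u_{\bar\lambda(z)}=u$ for suitable centers $z$, by the standard calculus lemma on functions invariant under a family of conformal inversions (Li--Zhu type), forces $u$ (and similarly $v$ restricted to the boundary) to have exactly the stated bubble form $c_1\big(d/(1+d^2|y-z_0|^2)\big)^{(n-\alpha)/2}$.

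\textbf{Stage 2: Forcing the critical exponents.} Plug the bubble $u^{-\theta}(y)=c_1^{-\theta}\big(d/(1+d^2|y-z_0|^2)\big)^{-\theta(n-\alpha)/2}$ into the second equation of \eqref{integral} and compute $v(x)$ as a boundary integral; for the resulting $v$ to again be a bubble with the correct homogeneity when substituted back into the first equation — and in particular for all the defining integrals to converge and the scaling in $\lambda$ to be consistent — the exponents are forced. Concretely, matching homogeneity degrees under the dilation $y\mapsto ty$, $x\mapsto tx$ in both equations of \eqref{integral} (using $\beta(1-k)+1>0$ for convergence of the $x_n$-weighted integral) yields $-\theta(n-\alpha)/2 \cdot$(appropriate relation)$=$(the degree produced by the integral), and solving the two resulting linear relations together with the constraint $\frac{1}{k-1}=\frac{n-1}{n}\big(\frac{\alpha+\beta-n}{n-1}-\frac{1}{\theta-1}\big)$ pins down $k=\frac{n+\alpha+2\beta}{\alpha+2\beta-n}$ and $\theta=\frac{n+\alpha-2}{\alpha-n}$. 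The hypotheses $k\le \frac{n+\alpha+2\beta}{\alpha+2\beta-n}$, $\theta\le\frac{n+\alpha-2}{\alpha-n}$ are exactly what is needed to run the moving-sphere argument (they control the sign of the remainder kernels and guarantee the integrals defining $u_\lambda,v_\lambda$ converge), and then the classification shows equality must hold.

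\textbf{Stage 3: Computing the best constant.} In the conformally invariant case we may use the stereographic-type conformal equivalence between $(\mathbb{R}^n_+,\partial\mathbb{R}^n_+)$ and $(B_1,\partial B_1)$ to transport the extremal problem to the ball. Under this map the extremal $f$ corresponds (up to the conformal Jacobian factors) to a constant function on $\partial B_1$, so $C^*_{n,\alpha,\beta,p}=\|T\mathbf{1}_{\partial B_1}\|_{L^q(B_1)}/\|\mathbf{1}_{\partial B_1}\|_{L^p(\partial B_1)}$ computed with the transported kernel $\big(\frac{1-|\xi|^2}{2}\big)^\beta|\eta-\xi|^{\alpha-n}$; tracking the conformal weights produces the normalizing factor $(n\nu_n)^{-\frac{n+\alpha-2}{2(n-1)}}$ (here $n\nu_n=|\partial B_1|$ and $\|\mathbf 1_{\partial B_1}\|_{L^p}^p$ contributes), giving formula \eqref{sharp}. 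The explicit integral $\int_{B_1}\big|\int_{\partial B_1}(\tfrac{1-|\xi|^2}{2})^\beta|\eta-\xi|^{\alpha-n}d\eta\big|^{\frac{2n}{n-\alpha-2\beta}}d\xi$ is then just the value of the transported functional at the constant.

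\textbf{Main obstacle.} The crux is Stage 1, specifically making the nonstandard moving-sphere argument rigorous \emph{at the level of Lebesgue measurable solutions only} — one cannot invoke pointwise elliptic regularity because the weight $x_n^\beta$ obstructs the usual bootstrap. The delicate points are: (i) proving the integral comparison identities and the positivity of the net kernels for all admissible $(\lambda,z)$, which requires careful estimates of $|x-y|$ versus its Kelvin-reflected counterparts near $\partial\mathbb{R}^n_+$ where the $x_n^{\beta(1-k)}$ weight interacts with the inversion; (ii) the ``narrow region'' step — showing that for $\lambda$ in the starting range the measure-theoretic set where $u_\lambda>u$ (resp.\ $v_\lambda>v$) has small enough ``capacity'' relative to the kernel that the integral inequality self-improves to emptiness, which is the substitute for the classical maximum principle and must be done without continuity of $u,v$; and (iii) ruling out $\bar\lambda=0$ for \emph{every} center simultaneously, i.e.\ the rigidity step, which uses the integrability $u\in L^{\text{something}}$, $v\in L^{\text{something}}$ forced by the Euler--Lagrange origin of the system to exclude nonzero constants.
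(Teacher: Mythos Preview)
Your plan has the right overall architecture (comparison identities, moving spheres, dichotomy, calculus lemma, conformal transport to the ball), but Stage~1 contains two genuine errors that would make the argument fail as written.

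\textbf{Wrong direction for the spheres.} You propose to start with $\lambda$ large and \emph{decrease} to $\bar\lambda:=\inf\{\lambda: u_\mu\le u,\ v_\mu\le v\ \forall\mu\ge\lambda\}$. This cannot get off the ground. Since $\alpha>n$, for fixed $y$ with $|y|<\lambda$ one has $u_{z,\lambda}(y)=(|y-z|/\lambda)^{\alpha-n}u(y^{z,\lambda})$, and Lemma~\ref{lem2} gives $u(y^{z,\lambda})\sim C|y^{z,\lambda}|^{\alpha-n}\sim C(\lambda^2/|y-z|)^{\alpha-n}$, so $u_{z,\lambda}(y)\sim C\lambda^{\alpha-n}\to+\infty$ as $\lambda\to\infty$. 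Thus $u_{z,\lambda}\le u$ fails for every large $\lambda$, and the set you are taking the infimum over is empty. The paper does the opposite: it starts with $\lambda$ \emph{small} (Lemma~\ref{lemmastart}), defines $\bar\lambda(z)$ as a supremum, and dilates outward. The phrase ``from the neighborhood of singular points to the limiting radius'' in the introduction means exactly this: start near $\lambda=0$ (where $z$ is the only singular point of $u_{z,\lambda}$) and move out. Correspondingly, the dichotomy in the paper is $\bar\lambda(z)=+\infty$ for all $z$ (forcing $u$ constant, hence $v\equiv+\infty$, a contradiction) versus $\bar\lambda(\hat z)<+\infty$ for some $\hat z$, not $\bar\lambda=0$ versus $\bar\lambda>0$.

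\textbf{Misreading of ``without lifting regularity.''} You plan to run the narrow-region step ``without continuity of $u,v$'' and flag this as the main obstacle. In fact the paper \emph{does} establish $(u,v)\in C(\partial\mathbb{R}^n_+)\times C(\overline{\mathbb{R}^n_+})$ directly from the integral representation (end of Lemma~\ref{lem2}, via a H\"older-type estimate on $|x-y|^{\alpha-n}$), and this continuity is used repeatedly: in Lemma~\ref{lemaa} to take $\inf u$, and in Lemma~\ref{lemmasequali} to perturb $\lambda$ past $\bar\lambda$. What the paper avoids is the bootstrap to $C^1$, which is what Li's original scheme needs and which the weight $x_n^\beta$ obstructs. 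So your obstacle is real, but the resolution is to first prove continuity from the integrals and then proceed, not to attempt a purely measure-theoretic narrow-region argument.

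A smaller point on Stage~2: the paper does not pin down $k,\theta$ by plugging the bubble back in. Once $u\equiv u_{\bar z,\bar\lambda}$ and $v\equiv v_{\bar z,\bar\lambda}$, the comparison identities \eqref{equ11}--\eqref{equ12} have zero left-hand side while the integrand carries the factor $(\bar\lambda/|x-\bar z|)^{n+\alpha+2\beta-(\alpha+2\beta-n)k}$ (resp.\ the $\theta$ analogue); positivity of $K$ forces these exponents to vanish, giving the critical values immediately. Your scaling/substitution route may also work but is less direct.
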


The method of moving spheres can be applied to capture the explicit form of solutions directly rather than going through the procedure of deriving radial symmetry of solutions and then classifying radial solutions. For more results related to the method of moving spheres or moving planes, please see \cite{CD,Chen,CGS,CL0,CL2,DFHQW,DL,DL2,DLL,DLQ,DQ,GNN,Liu,LD,L,LZH,LZ,Lu,Pa} and the references therein.

\medskip

As a consequence of Theorem \ref{theoremfen}, we can derive the following sharp reversed Hardy-Littlewood-Sobolev inequality on $\mathbb{R}^{n}_{+}$ in the conformally invariant cases $p=\frac{2(n-1)}{n+\alpha-2}$ and $q=\frac{2n}{n-\alpha-2\beta}$.
\begin{thm}\label{SR-HLS}
Assume $n\geq2$, $\alpha>n$ and $0\leq\beta<\frac{\alpha-n}{2(n-1)}$. For all nonnegative function $f\in L^{\frac{2(n-1)}{n+\alpha-2}}(\partial\mathbb{R}^{n}_{+})$, we have
\begin{equation}\label{SRHLS}
  \|Tf\|_{L^{\frac{2n}{n-\alpha-2\beta}}(\mathbb{R}^n_+)}\geq C^{\ast}_{n,\alpha,\beta,\frac{2(n-1)}{n+\alpha-2}}\|f\|_{L^{\frac{2(n-1)}{n+\alpha-2}}(\partial\mathbb{R}^n_+)},
\end{equation}
where $Tf$ is defined in \eqref{operator} and the sharp constant $C^{\ast}_{n,\alpha,\beta,\frac{2(n-1)}{n+\alpha-2}}$ is given by \eqref{sharp}. The equality holds if and only if
\begin{equation}\label{extremal}
  f(y)=c\left(\frac{d}{1+d^{2}|y-z_0|^2}\right)^{\frac{n+\alpha-2}{2}}, \qquad \forall \, y\in\partial\mathbb{R}^{n}_{+}
\end{equation}
for some $z_0\in\partial \mathbb{R}^n_{+}$, $c>0$ and $d>0$.
\end{thm}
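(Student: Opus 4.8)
The plan is to deduce Theorem \ref{SR-HLS} directly from Theorem \ref{theoremfen} together with Theorems \ref{theorem1} and \ref{theorem2}. First I would observe that the conformally invariant exponents $p=\frac{2(n-1)}{n+\alpha-2}$ and $q=\frac{2n}{n-\alpha-2\beta}$ are exactly the ones forced by Theorem \ref{theoremfen}: the scaling relation $\frac{1}{q}=\frac{n-1}{n}\bigl(\frac1p-\frac{\alpha+\beta-1}{n-1}\bigr)$ together with the identities $-\theta=\frac{1}{p-1}$, $-k=q-1$ translates the conclusion $k=\frac{n+\alpha+2\beta}{\alpha+2\beta-n}$, $\theta=\frac{n+\alpha-2}{\alpha-n}$ into $p=\frac{2(n-1)}{n+\alpha-2}$, $q=\frac{2n}{n-\alpha-2\beta}$; one also checks that the hypothesis $0\le\beta<\frac{\alpha-n}{2(n-1)}$ guarantees $p,q'\in(0,1)$, $\beta q+1>0$, and the constraint $p>\frac{n-1}{\alpha-1-(n-1)\beta}$ from Theorem \ref{theorem1}, so that all earlier results apply at these exponents. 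Thus inequality \eqref{SRHLS} with \emph{some} positive constant is just the special case of Theorem \ref{theorem1} (equivalently \eqref{dengjiabudengshi}), and the content of Theorem \ref{SR-HLS} is the \emph{sharpness} of the constant and the \emph{identification} of the extremals.

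Next I would invoke Theorem \ref{theorem2} to produce a nonnegative extremizer $f_0$ with $\|f_0\|_{L^p(\partial\mathbb{R}^n_+)}=1$ and $\|Tf_0\|_{L^q(\mathbb{R}^n_+)}=C^*_{n,\alpha,\beta,p}$. A standard variational argument shows that any such minimizer, after normalization by a positive multiplicative constant, satisfies the Euler--Lagrange equation \eqref{Eulereq}, hence the pair $(u,v)=(f_0^{p-1},\,x_n^{-\beta}Tf_0)$ is a pair of nonnegative Lebesgue measurable solutions of the integral system \eqref{integral}. One must check $f_0$ is (almost everywhere) positive — this follows since $Tf_0>0$ wherever $f_0$ is not identically zero, and then \eqref{Eulereq} forces $f_0^{p-1}$ to be given by a strictly positive integral, so $f_0>0$ a.e.\ on $\partial\mathbb{R}^n_+$. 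Applying Theorem \ref{theoremfen} to this pair yields that $u$, and therefore $f_0=u^{\frac{1}{p-1}}$, has the bubble form \eqref{extremal}; conversely every function of the form \eqref{extremal} is, by the conformal invariance of the inequality under the group generated by translations in $\partial\mathbb{R}^n_+$ and Kelvin transforms centered on the boundary, an extremizer (the inequality is invariant so all bubbles give the same ratio, which must then equal $C^*$). Plugging one normalized representative of \eqref{extremal} into the functional and carrying out the resulting integral — conveniently after pulling back to the ball $B_1$ via stereographic projection, which turns the half space into $B_1$ and the boundary into $\partial B_1$ — produces the explicit value \eqref{sharp}; this is the computation already recorded in the statement, so I would only indicate the change of variables and cite the corresponding lemma for the Jacobian and kernel transformation.

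The main obstacle is the \textbf{regularity-free} application of the classification theorem: one needs to be certain that the minimizer supplied by Theorem \ref{theorem2} genuinely satisfies \eqref{integral} in the pointwise (a.e.) sense required as the hypothesis of Theorem \ref{theoremfen}, i.e.\ that $Tf_0\in L^q_{loc}$ with the right integrability so that the right-hand side of \eqref{Eulereq} is finite a.e., and that no degeneracy ($f_0\equiv 0$, or $Tf_0$ vanishing on a positive-measure set) occurs. Establishing finiteness and positivity of these integral expressions, and checking that the normalization constant relating the minimizer to the solution of \eqref{integral} is consistent with the conformal exponents, is where the care is needed; once that is in place, the chain ``minimizer $\Rightarrow$ Euler--Lagrange solution $\Rightarrow$ classified bubble $\Rightarrow$ explicit constant'' closes, and the ``if and only if'' in \eqref{extremal} is immediate because equality in \eqref{SRHLS} means $f$ is a minimizer (after scaling to unit norm) and conversely every bubble realizes equality by conformal invariance. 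The remaining steps are routine: verifying the admissibility of the exponents and performing the stereographic change of variables to obtain \eqref{sharp}.
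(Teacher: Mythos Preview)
Your proposal is correct and follows essentially the same route as the paper: the paper states Theorem~\ref{SR-HLS} explicitly ``as a consequence of Theorem~\ref{theoremfen}'', deriving the inequality from Theorem~\ref{theorem1}, the existence of an extremizer from Theorem~\ref{theorem2}, and then feeding the resulting Euler--Lagrange solution into the classification Theorem~\ref{theoremfen} to obtain the bubble form and the conformal exponents. The explicit value~\eqref{sharp} is computed in the paper at the end of Section~4 by exactly the Kelvin/stereographic transformation to the ball that you describe, so your outline matches the paper's argument in all essential respects.
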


\medskip

Obviously, for any extremal function $f$ to inequality \eqref{dengjiabudengshi}, $(f^{p-1},x_{n}^{-\beta}Tf)$ solves the integral system \eqref{integral} up to a constant multiplier. In light of Theorem \ref{theorem2}, we deduce that the sufficient condition for existence of positive solutions $(u,v)$ satisfying $(u,x_{n}^{\beta}v)\in L^{1-\theta}(\partial\mathbb{R}^{n}_{+})\times L^{1-k}(\mathbb{R}^{n}_{+})$ to the integral system \eqref{integral} is
\begin{equation}\label{SC}
  \frac{n-1}{\theta-1}+\frac{n}{k-1}=\alpha+\beta-n \qquad \text{and} \qquad \beta(1-k)+1>0.
\end{equation}

We will use the Pohozaev identities to prove that the sufficient condition \eqref{SC} is also a necessary condition for the existence of $C^{1}$ positive solutions to the system \eqref{integral}.

\begin{thm}\label{theorem4}
For $n\geq2$, $\alpha>n$, $\beta\geq0$, $\theta>0$, $k>0$, $\theta\neq1$, $k\neq1$ satisfying $\beta(1-k)+1>0$, assume that the system \eqref{integral} admits a pair of positive $C^1$ solutions $(u,v)$, then a necessary condition for $\theta$ and $k$ is
$$\frac{n-1}{\theta-1}+\frac{n}{k-1}=\alpha+\beta-n.$$
\end{thm}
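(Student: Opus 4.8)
\textbf{Proof proposal for Theorem \ref{theorem4}.}

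The plan is to derive the necessary relation by establishing a Pohozaev-type identity for the integral system \eqref{integral} using the scaling vector field $x\cdot\nabla$. First I would multiply the first equation in \eqref{integral} by $y\cdot\nabla u(y)$ and integrate over a large ball $B_R^+:=B_R(0)\cap\partial\mathbb{R}^n_+$ in $\partial\mathbb{R}^n_+$, and similarly multiply the second equation by $x\cdot\nabla v(x)$ and integrate over $B_R(0)\cap\mathbb{R}^n_+$. Writing $I_R$ for the double integral
\[
I_R=\int_{B_R(0)\cap\mathbb{R}^n_+}\int_{B_R(0)\cap\partial\mathbb{R}^n_+}\frac{x_n^{\beta(1-k)}}{|x-y|^{n-\alpha}}\,u^{-\theta}(y)\,v^{-k}(x)\,dy\,dx,
\]
I would differentiate $I_R$ under the scaling $(y,x)\mapsto(\lambda y,\lambda x)$ and use the homogeneity of the kernel: the kernel $x_n^{\beta(1-k)}|x-y|^{\alpha-n}$ is homogeneous of degree $\beta(1-k)+\alpha-n$, while the measures contribute $n-1$ (boundary) and $n$ (interior) to the total scaling degree. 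Combining the two "one-sided" Pohozaev identities obtained by integration by parts — one expressing $\int (y\cdot\nabla u)\,v^{-k}$-type terms against $u^{-\theta}$, the other the symmetric statement — produces, after letting $R\to\infty$ and checking that the boundary terms on $\partial B_R$ vanish, an identity of the schematic form
\[
\Big(\frac{n-1}{1-\theta}+\frac{n}{1-k}\Big)\,\mathcal{E}=\big(\alpha+\beta-n-\beta k\big)\,\mathcal{E}+\beta(1-k)\,\mathcal{E}',
\]
where $\mathcal{E}=\int\!\int x_n^{\beta(1-k)}|x-y|^{\alpha-n}u^{-\theta}v^{-k}$ is finite and positive (this is where one invokes that $(u,x_n^\beta v)\in L^{1-\theta}\times L^{1-k}$ is \emph{not} assumed, so one must instead argue directly that the integrals entering the identity converge). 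Equating the coefficients and simplifying — here the term with the extra factor of $x_n$ from differentiating $x_n^{\beta(1-k)}$ must be reorganized using that $v$ solves its own equation and that $x_n\partial_{x_n}$ acting on the second equation reproduces a multiple of $\mathcal{E}$ — yields exactly $\frac{n-1}{\theta-1}+\frac{n}{k-1}=\alpha+\beta-n$.

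More concretely, I would carry out the following steps in order. Step 1: justify that $u$ and $v$, being positive $C^1$ solutions of \eqref{integral}, have definite decay/growth at infinity inherited from the kernel — in particular $u(y)\sim |y|^{\alpha-n}$ and $v(x)\sim|x|^{\alpha-n}$ as the respective variables tend to infinity (with appropriate constants), since $\alpha-n>0$ the solutions grow, so the relevant integrals and the integration-by-parts boundary terms must be controlled via the precise exponents; this is the analytic heart of the argument. Step 2: on the ball of radius $R$, integrate $y\cdot\nabla u$ against the right-hand side and use $\int_{B_R^+}(y\cdot\nabla u)\,w\,dy=-\,(n-1)\int_{B_R^+}u\,w\,dy-\int_{B_R^+}u\,(y\cdot\nabla w)\,dy+R\int_{\partial B_R^+}u\,w\,d\sigma$ with $w=v^{-k}$ transported through the kernel; do the symmetric computation for the second equation. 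Step 3: add the two identities; the cross terms involving $y\cdot\nabla_y\big(|x-y|^{\alpha-n}\big)+x\cdot\nabla_x\big(x_n^{\beta(1-k)}|x-y|^{\alpha-n}\big)$ collapse, by Euler's homogeneity relation, to $(\alpha-n+\beta(1-k))$ times the kernel itself. Step 4: send $R\to\infty$; show the surface integrals over $\partial B_R$ vanish using the decay rates from Step 1 together with the integrability of $u^{-\theta}$ and $v^{-k}$ near infinity (the hypotheses $\theta>0$, $k>0$, $\alpha>n$ give $u^{-\theta}$, $v^{-k}$ enough decay). Step 5: read off the coefficient identity and rearrange to the stated form, which amounts to the bookkeeping $-(n-1)\cdot\frac{1}{\theta-1}-n\cdot\frac{1}{k-1}$ coming out of combining the $(n-1)\int u\,w$ and $n\int v\cdot(\text{kernel})$ terms with the factors $u^{-\theta}$, $v^{-k}$ and then integrating the chain-rule identities $y\cdot\nabla u^{-\theta}=-\theta u^{-\theta-1}(y\cdot\nabla u)$, etc., back against the integral representations.

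The main obstacle I anticipate is Step 1 together with the convergence of the boundary terms in Step 4: since $\alpha>n$ the kernel is \emph{growing}, so $u$ and $v$ are unbounded, and one cannot simply cite decay — one has to extract sharp two-sided asymptotics $c_-|y|^{\alpha-n}\le u(y)\le c_+|y|^{\alpha-n}$ directly from the defining integrals (splitting $|x-y|^{\alpha-n}$ over the regions $|x|\le |y|/2$, $|y|/2\le|x|\le 2|y|$, $|x|\ge 2|y|$ and using that $u^{-\theta}$, $v^{-k}$ are locally bounded and decay at the rate forced by these very asymptotics — a bootstrap that closes because $\theta,k>0$). Once these asymptotics are in hand, one checks that on $\partial B_R$ the product $u\cdot(\text{kernel integral of }v^{-k})$ times $R$ (the surface-measure Jacobian factor) tends to $0$; this requires $\beta(1-k)+1>0$ precisely so that the $x_n$-weight is integrable near the boundary hyperplane and does not spoil the estimate. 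A secondary technical point is that all manipulations in Steps 2–3 are a priori only formal for $C^1$ (not $C^2$) solutions, so I would instead phrase them as statements about the \emph{integral} representations — differentiating $\lambda\mapsto I_{R}$ for the scaled solutions $u_\lambda(y)=u(\lambda y)$, $v_\lambda(x)=v(\lambda x)$ at $\lambda=1$, which only needs $C^1$ regularity and dominated convergence — thereby avoiding any second derivatives entirely.
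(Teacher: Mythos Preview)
Your approach is correct and is precisely the Pohozaev-via-scaling strategy the paper uses. Two simplifications you should adopt from the paper's execution: first, group the weight with $v$ and work with $\tilde v:=x_n^\beta v$ throughout, so that $\int_{\mathbb{R}^n_+}\tilde v^{-k}\,(x\cdot\nabla\tilde v)\,dx=-\frac{n}{1-k}\int\tilde v^{1-k}$ after integration by parts with no leftover term---your extra $\beta(1-k)\,\mathcal{E}'$ is a miscount, since $x\cdot\nabla x_n^{\beta(1-k)}=\beta(1-k)\,x_n^{\beta(1-k)}$ carries the \emph{same} power of $x_n$ and just shifts the coefficient (indeed, working with $v$ directly one gets $\frac{n+\beta(1-k)}{k-1}$ in place of $\frac{n}{k-1}$, and this rearranges to the stated identity without any $\mathcal{E}'$); second, the integrability you flag as the ``main obstacle'' need not be bootstrapped---Lemma~\ref{lem2} applies verbatim to any positive measurable (hence $C^1$) solution and yields $u\le C(1+|y|^{\alpha-n})$ together with $\int(1+|y|^{\alpha-n})u^{-\theta}\,dy<\infty$, so $\int u^{1-\theta}<\infty$ and $\int x_n^{\beta(1-k)}v^{1-k}<\infty$ drop out immediately, after which one disposes of the $\partial B_R$ boundary terms along a sequence $R_j\to\infty$ (the standard ``good radii'' argument) rather than for every $R$.
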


\medskip

The rest of this paper are organized as follows. Section 2 is devoted to proving the reversed Hardy-Litttlewood-Sobolev inequality with extended kernels. In section 3, by using the rearrangement inequality, we obtain the existence of extremals to the reversed Hardy-Litttlewood-Sobolev inequality. In section 4, we use a variant of the method of moving spheres to classify all extremal functions, and compute the best constant in the conformal invariant case. In section 5, by using the Pohozaev identity in integral forms, we give a necessary condition for the existence of positive $C^{1}$ solutions to integral system \eqref{integral}.

\section{The proof of Theorem \ref{theorem1}}

In this section, we use the reversed Marcinkiewicz interpolation theorem and weak type estimate to establish the reversed Hardy-Littlewood-Sobolev inequality with the extended kernels.

\medskip

\begin{thm}\label{theorem1sub}
Let $n\geq2$, $p, q'\in(0,1)$, $\alpha>n$, $0\leq\beta<\frac{\alpha-n}{n-1}$, $p>\frac{n-1}{\alpha-1-(n-1)\beta}$ and suppose that $\alpha$, $\beta$, $p$, $q'$ satisfy
$$\frac{n-1}{n}\frac{1}{p}+\frac{1}{q'}-\frac{\alpha+\beta-1}{n}=1.$$
Then, there is a constant $C_{n,\alpha,\beta,p}>0$ such that for any nonnegative functions $f\in L^p(\partial\mathbb{R}_+^n)$ and $g\in L^{q'}(\mathbb{R}_+^n)$,
\begin{equation}\label{equalitysub}
\int_{\mathbb{R}_+^n}\int_{\partial\mathbb{R}^n_+} \frac{x_n^\beta }{|x-y|^{n-\alpha}} f(y)g(x) dydx\geq
C_{n,\alpha,\beta,p}\|f\|_{L^p(\partial\mathbb{R}_{+}^{n})}\|g\|_{L^{q'}(\mathbb{R}_{+}^{n})}.
\end{equation}
\end{thm}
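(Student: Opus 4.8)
The plan is to establish \eqref{equalitysub} by the \emph{reversed} Marcinkiewicz interpolation theorem applied to the sublinear operator $Tf$ defined in \eqref{operator}, so that it suffices to prove a single reversed weak-type estimate at a judicious endpoint and then interpolate with a trivial endpoint. Concretely, I would first reduce matters, by duality and homogeneity, to the operator inequality $\|Tf\|_{L^q(\mathbb{R}^n_+)}\geq C\|f\|_{L^p(\partial\mathbb{R}^n_+)}$ for nonnegative $f$, which is the form \eqref{dengjiabudengshi}; here $q<0$ (equivalently $0<q'<1$), so ``$L^q$'' is the usual quasi-norm $\big(\int (Tf)^q\big)^{1/q}$ with the reversed direction of the inequality built in. The scaling relation $\frac{n-1}{n}\frac1p+\frac1{q'}-\frac{\alpha+\beta-1}{n}=1$ is exactly the homogeneity condition that makes both sides of \eqref{equalitysub} scale the same way under $y\mapsto \lambda y$ on $\partial\mathbb{R}^n_+$ and $x\mapsto\lambda x$ on $\mathbb{R}^n_+$, so the inequality is scale invariant and one may normalize $\|f\|_{L^p}=1$.

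The core of the argument is the reversed weak-type $(p_0,q_0)$ bound: for a nonnegative $f$ on $\partial\mathbb{R}^n_+$ with $\|f\|_{L^{p_0}}$ fixed, one shows that the distribution function of $Tf$ is \emph{bounded below}, i.e. $\big|\{x\in\mathbb{R}^n_+ : Tf(x)>\lambda\}\big|\geq c\,(\lambda/\|f\|_{p_0})^{q_0}$ for all $\lambda>0$, with $q_0<0$ so the inequality points the right way. Since $\alpha>n$, the kernel $x_n^\beta|x-y|^{\alpha-n}$ is \emph{increasing} in $|x-y|$ rather than singular, which is what reverses every inequality: in particular, for nonnegative $f$, truncating the domain of integration only decreases $Tf$, and by Jensen's inequality applied to the concave function $t\mapsto t^{p_0}$ (recall $0<p_0<1$) on a ball one gets a pointwise lower bound $Tf(x)\geq c\,x_n^\beta\,(\text{dist}(x,\cdot))^{\alpha-n}\,\|f\mathbf{1}_E\|_{L^{p_0}}/|E|^{1/p_0-1}$ over suitable sets $E\subset\partial\mathbb{R}^n_+$. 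Integrating this lower bound over an appropriate region of $\mathbb{R}^n_+$ (a slab or a large ball, chosen so that $x_n^\beta$ and the distance factors are comparable to fixed powers of the relevant scale) and using the constraint $\beta q+1>0$ to guarantee integrability in $x_n$ near the boundary yields the reversed weak-type estimate; the precise exponents $p_0,q_0$ are forced by the homogeneity relation, and the hypothesis $p>\frac{n-1}{\alpha-1-(n-1)\beta}$ is exactly what keeps $q'<1$ (equivalently $q<0$) and makes the relevant integrals converge.

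With the reversed weak-type estimate in hand, I would invoke the reversed Marcinkiewicz interpolation theorem (in the form available for quasi-normed Lebesgue spaces with negative exponents, as used by Dou--Zhu and Ng\^o--Nguyen): interpolating the single reversed weak-type bound with an elementary reversed estimate at a second point — or more simply, using that a reversed weak-type $(p,q)$ inequality valid for \emph{all} admissible pairs on the critical line can be upgraded to the strong reversed bound \eqref{dengjiabudengshi} because the line segment of exponents is an open condition and one can interpolate from nearby pairs — one obtains $\|Tf\|_{L^q(\mathbb{R}^n_+)}\geq C_{n,\alpha,\beta,p}\|f\|_{L^p(\partial\mathbb{R}^n_+)}$, which by duality is \eqref{equalitysub}.

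The main obstacle, I expect, is the reversed weak-type estimate itself: unlike the classical HLS setting where one controls the kernel's singularity, here one must produce a \emph{lower} bound on $Tf$ that survives integration, and this requires carefully choosing, for each level $\lambda$, both the piece $E$ of the boundary on which $f$ carries a definite fraction of its $L^{p_0}$ mass (a layer-cake / Chebyshev argument, since $f$ need only be in $L^{p_0}$ with $p_0<1$) and the region of $\mathbb{R}^n_+$ over which the kernel is bounded below by the right power of the scale; the bookkeeping of exponents, together with checking that the constraint $\beta q+1>0$ and $0\leq\beta<\frac{\alpha-n}{n-1}$ make every integral finite, is where the real work lies. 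The interpolation step, by contrast, should be essentially a citation once the endpoint estimate is established.
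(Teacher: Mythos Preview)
Your overall architecture---dualize to the operator form \eqref{dengjiabudengshi}, prove a reversed weak-type bound, then invoke the reversed Marcinkiewicz interpolation of Dou--Zhu---matches the paper. But the weak-type step has a genuine gap. First, for $q<0$ the reversed weak $L^q$ quasi-norm is $\inf_{\lambda>0}\lambda\,|\{Tf<\lambda\}|^{1/q}$, so the estimate you must prove is an \emph{upper} bound on the \emph{sublevel} set, namely $|\{x\in\mathbb{R}^n_+: Tf(x)<\lambda\}|\leq C\lambda^{-q}$; your formulation $|\{Tf>\lambda\}|\geq c\lambda^{q_0}$ is vacuous here, since the kernel grows like $|x|^{\alpha-n}$ and every superlevel set has infinite measure.

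Second, a pointwise lower bound on $Tf$---whether via your Jensen/truncation argument or via reversed H\"older---only confines $\{Tf<\lambda\}$ to a slab $\{0<x_n<a(\lambda)\}$, which still has infinite measure in the tangential variable $x'$. The paper closes this by a separate integral estimate: on each slice $x_n=\text{const}$ one applies the \emph{reversed Young inequality} $\|K_{x_n}\ast f\|_{L^r(\mathbb{R}^{n-1})}\leq \|f\|_{L^p}\,\|K_{x_n}\|_{L^s}$ with $r,s<0$ and $\frac1r+1=\frac1p+\frac1s$, then integrates in $x_n$ over $(0,a)$ to obtain $\int_{0<x_n<a}(Tf)^r\,dx\leq C\,a^{\text{power}}$. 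Combined via Chebyshev ($r<0$) with the pointwise lower bound $Tf(x)\geq C\,x_n^{(n-1)/p'+\alpha+\beta-n}$ from reversed H\"older (which fixes $a(\lambda)$), this yields the correct sublevel bound. Your plan has no analogue of this $L^r$ step controlling the tangential direction, and without it the weak-type estimate does not follow.
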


\medskip

\begin{proof}
For $t>0$ and $x'\in \mathbb{R}^{n-1}$, define
$$K_t(x'):=\frac{t^\beta}{(|x'|^2+t^2)^{\frac{n-\alpha}{2}}}.$$
Then, for $x=(x',x_n)\in\mathbb{R}^n_+$, $y=(y',0)\in \partial \mathbb{R}^n_+$, we have
$$K_{\alpha,\beta}(x'-y',x_n)=\frac{x_n^\beta }{|x-y|^{n-\alpha}}=K_{x_n}(x'-y'), \qquad Tf(x)=(K_{x_n}\ast f)(x').$$

We are to prove Theorem \ref{theorem1sub} via showing inequality \eqref{dengjiabudengshi}. Let $p\in\left(\frac{n-1}{\alpha-1-(n-1)\beta},1\right)$, and $q$ satisfy $\frac{1}{q}=\frac{n-1}{n}\left(\frac{1}{p}-\frac{\alpha+\beta-1}{n-1}\right)$. By the reversed Marcinkiewicz interpolation theorem (see Proposition 2.5 in Dou and Zhu \cite{DZ2}), we only need to prove the following weak-type estimate:
\begin{equation}\label{a0}
\|Tf\|_{L_{w}^{q}(\mathbb{R}^n_+)}\geq C_{n,\alpha,\beta,p}\|f\|_{L^{p}(\partial\mathbb{R}^n_+)}, \qquad \forall \,\, f\in L^{p}(\partial\mathbb{R}^n_+), \,\, f\geq0.
\end{equation}
That is, we need to show that there is a constant $C_{n,\alpha,\beta,p}>0$ such that
$$\inf_{\lambda>0}\lambda\left|\{x\in\mathbb{R}^n_+ \mid |Tf(x)|<\lambda\}\right|^{\frac{1}{q}}\geq C_{n,\alpha,\beta,p}\|f\|_{L^{p}(\partial\mathbb{R}^n_+)}, \quad \forall f\in L^{p}(\partial\mathbb{R}^n_+), \,\, f\geq0.$$

Without loss of generality, we may assume that $\|f\|_{L^p(\partial\mathbb{R}^n_+)}=1$. Assume that $r$, $s<0$ satisfy
\begin{equation}\label{a1}
r\in \left(\frac{np}{(1-\alpha-\beta)p+n-1},0\right), \quad s<\frac{n-1}{n-\alpha}, \qquad \frac{1}{r}+1=\frac{1}{p}+\frac{1}{s}.
\end{equation}
It follows from the reversed Young equality (Lemma 2.2 in \cite{DZ2}) that, for any $a>0$,
\begin{equation*}\begin{split}
\int_{\substack{x\in\mathbb{R}^n_+ \\ 0<x_n<a}}\left|Tf(x)\right|^{r}dx
&=\int_{0}^{a}\int_{\mathbb{R}^{n-1}}\left|(K_{x_n}\ast f)(x')\right|^{r}dx'dx_n\\
&\leq\|f\|^r_{L^{p}(\mathbb{R}^{n-1})}\int_{0}^{a}\left\|K_{x_n}\right\|^r_{L^s(\mathbb{R}^{n-1})}dx_n\\
&=\int_{0}^{a}\left(\int_{\mathbb{R}^{n-1}}\frac{x_n^{\beta s}}{\left(|x'|^2+x_n^2\right)^{\frac{(n-\alpha)s}{2}}}dx'\right)^{\frac{r}{s}}dx_n\\
&=\int_{0}^{a}x_n^{\frac{(n-1)r}{s}+(\alpha+\beta-n)r}dx_n\cdot\left(\int_{\mathbb{R}^{n-1}}\frac{1}{\left(|x'|^2+1\right)^{\frac{(n-\alpha)s}{2}}}dx'\right)^{\frac{r}{s}}.
\end{split}\end{equation*}
One can deduce from \eqref{a1} that
$$\frac{(n-1)r}{s}+(\alpha+\beta-n)r>-1.$$
Then, we have
\begin{equation}\label{eq0}
\int_{\substack{x\in\mathbb{R}^n_+ \\ 0<x_n<a}}|Tf(x)|^{r}dx\leq C_1 a^{\frac{(n-1)r}{s}+(\alpha+\beta-n)r+1}.
\end{equation}

In view of the reversed H\"{o}lder inequality (Lemma 2.1 in \cite{DZ2}), we can deduce that, for any $x_{n}>0$ and $x'\in \mathbb{R}^{n-1}$,
\begin{eqnarray*}
 && (K_{x_n}\ast f)(x')\geq x_{n}^{\beta}\left(\int_{\partial\mathbb{R}^{n}_{+}}\frac{1}{\left(|x'-y'|^{2}+x_{n}^{2}\right)^{\frac{(n-\alpha)p'}{2}}}dy\right)^{\frac{1}{p'}} \\
 && \qquad\qquad\qquad \geq x_n^{\frac{n-1}{p'}+(\alpha+\beta-n)}
  \left(\int_{\mathbb{R}^{n-1}}\frac{1}{\left(1+|y'|^{2}\right)^{\frac{(n-\alpha)p'}{2}}}dy'\right)^{\frac{1}{p'}}.
\end{eqnarray*}
Consequently, we arrive at, for any $x_{n}>0$,
\begin{equation}\label{eq1}
  \inf_{x'\in{\mathbb{R}^{n-1}}}(K_{x_n}*f)(x')\geq C_{n,\alpha,p}x_n^{\frac{n-1}{p'}+(\alpha+\beta-n)}.
\end{equation}
Since $p\in\left(\frac{n-1}{\alpha-1-(n-1)\beta},1\right)$, we get that $\frac{n-1}{p'}+(\alpha+\beta-n)>0$. Thus we can derive from \eqref{eq0} and \eqref{eq1} that, for any $\lambda>0$,
\begin{equation*}\begin{split}
&\quad \left|\left\{x\in\mathbb{R}^n_+ \mid |Tf(x)|<\lambda\right\}\right| \\
&=\Big|\Big\{x\in\mathbb{R}^n_+ \,\Big|\, 0<x_n<C_{n,\alpha,\beta,p}\lambda^{\frac{p'}{n-1+p'(\alpha+\beta-n)}},\ |Tf(x)|<\lambda\Big\}\Big|\\
&\leq \frac{1}{\lambda^r}\int_{\left\{x\in\mathbb{R}^n_+ \,\big|\,0<x_n<C_{n,\alpha,\beta,p}\lambda^{\frac{p'}{n-1+p'(\alpha+\beta-n)}}\right\}}|Tf(x)|^rdx\\
&\leq C_{n,\alpha,\beta,p}\lambda^{\frac{np}{(\alpha+\beta-1)p-n+1}}\\
&= C_{n,\alpha,\beta,p}\lambda^{-q},
\end{split}\end{equation*}
which yields immediately that
\begin{equation}\label{a3}
\|Tf\|_{L_w^q(\mathbb{R}^n_+)}\geq C_{n,\alpha,\beta,p}\|f\|_{L^{p}(\partial\mathbb{R}^n_+)}.
\end{equation}
Note that inequality \eqref{a3} implies, via the reversed Marcinkiewicz interpolation theorem (Proposition 2.5 in \cite{DZ2}), that
\begin{equation*}
\|Tf\|_{L^q(\mathbb{R}^n_+)}\geq C_{n,\alpha,\beta,p}\|f\|_{L^{p}(\partial\mathbb{R}^n_+)}.
\end{equation*}
This concludes our proof of Theorem \ref{theorem1}.
\end{proof}

\section{The proof of Theorem \ref{theorem2}}

In this section, we will employ rearrangement inequality to investigate the existence of minimizers for the minimizing problem£º
\begin{equation}\label{minimum}
C^*_{n,\alpha,\beta,p}:=\inf\left\{\left\|Tf\right\|_{L^q(\mathbb{R}^n_+)} \,\big|\, \|f\|_{L^p(\partial\mathbb{R}^n_+)}=1, \, f\geq 0\right\}.
\end{equation}

We prove
\begin{thm}\label{theorem2sub}
There exists a function $f\in L^p(\partial \mathbb{R}^n_+)$ such that $f\geq0$, $\|f\|_{L^p(\partial \mathbb{R}^n_+)}=1$, and $\|Tf\|_{L^q(\mathbb{R}^n_+)}=C^{*}_{n,\alpha,\beta,p}$.
\end{thm}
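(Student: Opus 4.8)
The plan is to run the direct method of the calculus of variations on the minimizing problem~\eqref{minimum}. Let $\{f_k\}\subset L^p(\partial\mathbb{R}^n_+)$ be a minimizing sequence: $f_k\ge0$, $\|f_k\|_{L^p(\partial\mathbb{R}^n_+)}=1$, $\|Tf_k\|_{L^q(\mathbb{R}^n_+)}\to C^{*}_{n,\alpha,\beta,p}$. The quotient $f\mapsto\|Tf\|_{L^q(\mathbb{R}^n_+)}/\|f\|_{L^p(\partial\mathbb{R}^n_+)}$ is invariant under the translations $f(\cdot)\mapsto f(\cdot-a)$, $a\in\mathbb{R}^{n-1}\cong\partial\mathbb{R}^n_+$, and --- because the balance condition of Theorem~\ref{theorem1} makes the exponents critical --- under the dilations $f(\cdot)\mapsto\lambda^{(n-1)/p}f(\lambda\,\cdot)$, $\lambda>0$; these symmetries will be used to fix the center and the scale of the minimizing sequence. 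Moreover, since $q<0$ the functional $g\mapsto\|g\|_{L^q}$ is monotone ($0\le g_1\le g_2$ forces $\|g_1\|_{L^q}\le\|g_2\|_{L^q}$, because $t\mapsto t^{q}$ and $t\mapsto t^{1/q}$ are both decreasing), so truncating $f_k$ only decreases $\|Tf_k\|_{L^q}$; truncating and renormalizing, we may also assume each $f_k\in L^\infty$ has compact support.

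The first substantive step is symmetrization. Replace $f_k$ by its symmetric non-increasing rearrangement $f_k^{\ast}$ on $\partial\mathbb{R}^n_+=\mathbb{R}^{n-1}$. The rearrangement inequality appropriate to the reversed setting --- available because $q<0$ and the kernel $|x-y|^{-(n-\alpha)}=|x-y|^{\alpha-n}$ is radially increasing, in the spirit of the rearrangement toolkit of Dou--Zhu~\cite{DZ2} --- yields $\|Tf_k^{\ast}\|_{L^q(\mathbb{R}^n_+)}\le\|Tf_k\|_{L^q(\mathbb{R}^n_+)}$, so the sequence stays minimizing; thus we may assume each $f_k$ is radial and non-increasing in $|y'|$. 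Using the dilation symmetry we then normalize so that $\int_{B_1}f_k^{p}\,dy'=\frac12$, with $B_1\subset\mathbb{R}^{n-1}$ the unit ball, which is possible because for a radial non-increasing profile of unit $L^p$-mass the mass inside $B_1$ sweeps $(0,1)$ continuously as the dilation parameter varies.

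Next I would extract a limit and record a priori bounds. Radial monotonicity together with $\|f_k\|_{L^p}=1$ gives the pointwise bound $f_k(y')\le C|y'|^{-(n-1)/p}$, and a short comparison argument (monotonicity plus the reversed H\"older inequality, Lemma~2.1 of~\cite{DZ2}) upgrades the normalization to the uniform lower bounds $\int_{B_1}f_k\,dy'\ge c_0>0$ and $\int_{B_{1/4}}f_k\,dy'\ge c_1>0$. By Helly's selection theorem for monotone functions, along a subsequence $f_k\to f$ a.e.\ on $\mathbb{R}^{n-1}$, with $f$ radial non-increasing; Fatou's lemma gives $\|f\|_{L^p}\le1$, and the lower mass bounds force $f\not\equiv0$. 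Applying Fatou in the inner integral in~\eqref{operator} also gives $\liminf_k Tf_k(x)\ge Tf(x)$ for a.e.\ $x\in\mathbb{R}^n_+$, hence $\limsup_k(Tf_k(x))^{q}\le(Tf(x))^{q}$ a.e., since $t\mapsto t^{q}$ is decreasing for $q<0$.

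The heart of the matter, and the step I expect to be the main obstacle, is to promote these pointwise facts into (i) \emph{no loss of mass}, $\|f\|_{L^p}=1$, and (ii) \emph{lower semicontinuity of the reversed norm}, $\|Tf\|_{L^q}\le\liminf_k\|Tf_k\|_{L^q}=C^{*}$; both are delicate precisely because $q<0$. For (i) the mechanism is that $\alpha>n$ makes $x_n^{\beta}|x-y|^{\alpha-n}$ grow in $|x|$, so any fraction of the $L^p$-mass escaping to spatial infinity would force $Tf_k$ to blow up pointwise on the bulk region, whence $(Tf_k)^{q}\to0$ there and $\|Tf_k\|_{L^q}\to\infty$, contradicting minimality; making this quantitative --- e.g.\ via a Brezis--Lieb-type splitting $f_k=f+r_k$ adapted to the singular exponent, combined with $\|f_k\|_{L^p}^{p}=\|f\|_{L^p}^{p}+\|r_k\|_{L^p}^{p}+o(1)$ and the elementary monotonicity $\|a+b\|_{L^q}\ge\|a\|_{L^q}$ for $a,b\ge0$, $q<0$ --- is the crucial point. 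For (ii), once escape is excluded one has $Tf_k\to Tf$ a.e.; then, using $\int_{B_{1/4}}f_k\ge c_1$, the lower estimate~\eqref{eq1}, and the fact that for the (truncated) $f_k$ the quantity $Tf_k(x)$ behaves, as $x_n\to0$, like a constant multiple of $x_n^{\beta}$, whose $q$-th power is locally integrable exactly by the standing hypothesis $\beta q+1>0$, one produces a uniform integrable majorant for $(Tf_k)^{q}$ on a fixed bounded subset of $\mathbb{R}^n_+$ and a fixed integrable lower bound for $Tf_k$ on its complement; a Vitali/reverse-Fatou argument then yields $\limsup_k\int(Tf_k)^{q}\le\int(Tf)^{q}$, i.e.\ (ii). Finally, $\|f\|_{L^p}=1$ together with~\eqref{dengjiabudengshi} gives $\|Tf\|_{L^q}\ge C^{*}$, which with (ii) forces $\|Tf\|_{L^q}=C^{*}$; thus the (dilation-normalized) $f$ is the desired minimizer.
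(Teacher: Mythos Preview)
Your overall architecture---symmetrize, fix the scale by dilation, extract an a.e.\ limit, then prove no loss of mass---matches the paper's. The part you correctly flag as ``the main obstacle'', namely step~(i), is where your sketch has a genuine gap, and the paper's proof fills it with a specific quantitative lemma that your proposal lacks.

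Your normalization $\int_{B_1}f_k^{p}=\tfrac12$ does \emph{not} rule out the vanishing scenario $f_k\to0$ a.e.: a radial decreasing sequence can satisfy this constraint while spreading its $L^p$-mass ever more thinly over dyadic annuli. Your heuristic ``mass escaping $\Rightarrow Tf_k\to\infty$ pointwise $\Rightarrow \|Tf_k\|_{L^q}\to\infty$'' fails at the last arrow: pointwise $(Tf_k)^q\to0$ on a bulk region does not force $\int(Tf_k)^q\to0$ without a dominated-convergence majorant, which you have not produced. The Brezis--Lieb splitting you suggest does not obviously survive the singular exponent $q<0$. The paper instead proves the key Lemma~\ref{lemsub}: if $f(y)\le\varepsilon|y|^{-(n-1)/p}$ then $\|Tf\|_{L^q}\ge C\,\varepsilon^{1-p/p_1}\|f\|_{L^p}^{p/p_1}$ with $1-p/p_1<0$, so a minimizing sequence cannot have $a_j:=\sup_{\lambda>0}\lambda^{(n-1)/p}f_j(\lambda)\to0$. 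This is what licenses the \emph{pointwise} normalization $f_j(e_1)\ge c_0$ (not your integral one), which immediately yields $Tf_j(x)\ge C x_n^{\beta}(1+|x|^{\alpha-n})$ uniformly. From there the paper extracts two points $x^1,x^2$ where $Tf_j$ stays bounded, and the elementary inequality $|x^1-x^2|^{\alpha-n}\le C(|x^1-y|^{\alpha-n}+|x^2-y|^{\alpha-n})$ converts this into a \emph{uniform $L^1$ bound} $\int f_j\le C_2$ and then a uniform decay $f_j(r)\le C_3 r^{1-\alpha}$ for large $r$. These two estimates give tightness of $\{f_j^p\}$ both at spatial infinity and at large values of $f_j$, whence $\|f_0\|_{L^p}=1$ by truncation plus dominated convergence. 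None of these steps appears in your sketch.

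Conversely, your step~(ii) is over-engineered: once $\|f_0\|_{L^p}=1$ is in hand, two applications of Fatou (to the inner integral, then to $-(Tf_j)^q$ with the integrable majorant coming from the uniform lower bound on $Tf_j$) give $\|Tf_0\|_{L^q}\le C^{*}$ directly; no Vitali argument is needed. Also, your preliminary truncation of $f_k$ to $L^\infty$ with compact support is harmless but unnecessary; the paper does not do it.
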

\begin{proof}
Using symmetrization argument, we first show that the minimizing problem \eqref{minimum} can be attained by radially symmetric functions.

Let $u\geq0$ be a measurable function on $\mathbb{R}^n$, the symmetric rearrangement of $u$ is the nonnegative lower semi-continuous radial decreasing function $u^*$ that has the same distribution as $u$ (see e.g. \cite{LL}). For function $v>0$, define
$$v_{*}:=\left((v^{-1})^*\right)^{-1}.$$
Then, $v_*$ is radially symmetric and increasing rearrangement function for $v$. For functions $u,w\geq0$ and $v>0$, one can derive that (see Proof of Proposition 9 in Brascamp and Lieb \cite{Brascamp})
\begin{equation*}
\int_{\mathbb{R}^n}\int_{\mathbb{R}^n}u(x-y)v(x)w(y)dxdy\geq \int_{\mathbb{R}^n}\int_{\mathbb{R}^n}u^*(x-y)v_*(x)w^{*}(y)dxdy.
\end{equation*}
Suppose that $w\geq0$ and $\|w\|_{L^{q'}(\mathbb{R}^n)}=\|w^*\|_{L^{q'}(\mathbb{R}^n)}=1$ for $0<q'<1$. Then, for $q<0$ and $q'=\frac{q}{q-1}$, we can deduce that
\begin{equation}\label{eq2} \begin{split}
&\quad\,\, \|u\ast v\|_{L^{q}(\mathbb{R}^n)} \\
&=\inf_{w\geq0,\, \|w\|_{L^{q'}(\mathbb{R}^n)}=1}\int_{\mathbb{R}^n}\int_{\mathbb{R}^n}u(x)\bar{v}(x-y)w(-y)dxdy\\
&=\inf_{w\geq0,\, \|w\|_{L^{q'}(\mathbb{R}^n)}=1}\int_{\mathbb{R}^n}\int_{\mathbb{R}^n}u(x-y)\bar{v}(x)w(y)dxdy\\
&\geq\inf_{w\geq0,\, \|w\|_{L^{q'}(\mathbb{R}^n)}=1} \int_{\mathbb{R}^n}\int_{\mathbb{R}^n}u^*(x-y)\bar{v}_*(x)w^*(y)dxdy\\
&\geq\inf_{w\geq0,\, \|w\|_{L^{q'}(\mathbb{R}^n)}=1} \left(\int_{\mathbb{R}^n}
\left(\int_{\mathbb{R}^n}u^*(x-y)v_*(x)dx\right)^qdy\right)^{\frac{1}{q}}\left(\int_{\mathbb{R}^n}\left(w^*(y)\right)^{q'}dy\right)^{\frac{1}{q'}}\\
&=\|u^{*}\ast v_{*}\|_{L^{q}(\mathbb{R}^n)},
\end{split}\end{equation}
where the function $\bar{v}$ is defined by $\bar{v}(x):=v(-x)$.

\medskip

Now, assume that $\{f_j\}$ is a minimizing sequence for the minimizing problem \eqref{minimum}. Then, by \eqref{eq2}, we have
$$\|f_j\|_{L^p(\partial\mathbb{R}^n_+)}=\|f_j^*\|_{L^p(\partial\mathbb{R}^n_+)}=1$$
and
\begin{equation*}\begin{split}
\|Tf_j\|_{L^q(\mathbb{R}^n_+)}^q&=\int_0^\infty\|K_{x_n}\ast f_j\|_{L^q(\mathbb{R}^{n-1})}^qdx_n\\
&\geq\int_0^\infty\|K_{x_n}\ast f_j^*\|_{L^q(\mathbb{R}^{n-1})}^qdx_n\\
&=\|Tf_j^*\|_{L^q(\mathbb{R}^n_+)}^q.
\end{split}\end{equation*}
Thus it follows that $\{f_j^*\}$ is also a minimizing sequence. Hence, from now on, we may assume that $\{f_j\}$ is a sequence of nonnegative radial decreasing function.

From this, we shall write $f_j(x)$ by $f_j(|x|)$ or $f_j(r)$ where $r=|x|$. By the normalization $\|f_j\|_{L^p(\partial\mathbb{R}^n_+)}=1$, we have
$$1=\omega_{n-2}\int_{0}^\infty f_j^p(r)r^{n-2}dr\geq \nu_{n-1}f_j^p(R)R^{n-1}$$
for any $R>0$, where $\omega_{n-2}$ and $\nu_{n-1}$ denote the surface area of the unit sphere and the volume of the unit ball in $\mathbb{R}^{n-1}$, respectively. The above estimate implies that there exists some constant $C_{n,p}$ independent of $j$ such that for any $R>0$,
$$0\leq f_j(R)\leq C_{n,p}R^{-\frac{n-1}{p}}, \qquad j=1,2,\cdots.$$

\medskip

To continue our proof, we need the following lemma to modify the minimizing sequence so that $f_j$ will not converge to the trivial function as $j$ goes to infinity. For $\alpha\in(0,n)$ and $\beta=0$, the following lemma was first obtained by Lieb \cite{Lieb} in the whole space. Subsequently, Dou and Zhu \cite{DZ2}, Ng\^{o} and Nguyen \cite{Ngo1} extended it to $\alpha>n$, $\beta=0$.
\begin{lem}\label{lemsub}
Suppose that $f \in L^p(\partial\mathbb{R}^{n}_+)$ is non-negative, radially symmetric, and
\begin{equation}\label{ffeiling}
f(y)\leq \varepsilon |y|^{-\frac{n-1}{p}}
\end{equation}
for all $y \in \partial\mathbb{R}^n_+$. Then for any $p_1\in (0, p)$, there exists a constant $C_{n,\alpha,\beta,p,p_{1}}>0$, independent of
$f$ and $\varepsilon$, such that
$$\|Tf\|_{L^q(\mathbb{R}^{n}_+)}\geq C_{n,\alpha,\beta,p,p_{1}}\varepsilon^{1-\frac{p}{p_1}}\|f\|^{\frac{p}{p_1}}_{L^p(\partial\mathbb{R}^{n}_+)}.$$
\end{lem}
\begin{proof}
Define $F: \mathbb{R}\rightarrow\mathbb{R}$ by
$$F(t)=e^{\frac{(n-1)t}{p}}f(e^t).$$
It follows from \eqref{ffeiling} that
\begin{equation}\label{lemmad}
\omega_{n-2}^\frac{1}{p}\|F\|_{L^p(\mathbb{R})}=\|f\|_{L^p(\partial\mathbb{R}^{n}_+)} \quad \text{and} \quad \|F\|_{L^\infty(\mathbb{R})}\leq \varepsilon.
\end{equation}

For any $x'\in \mathbb{R}^{n-1}$, since both $K_{x_n}(x')=\frac{x_n^\beta}{\left(|x'|^2+x_{n}^2\right)^{\frac{n-\alpha}{2}}}$ and $f$ are radially symmetric function, one can easily check that $Tf(x',x_n)$ is also radially symmetric w.r.t. $x'$.

Now we define $H: \mathbb{R}\times\mathbb{R}_+\rightarrow\overline{\mathbb{R}_{+}}$ by
$$H(t,x_n):=e^{\frac{nt}{q}}Tf(e^t,e^{t}x_n).$$
Then, it follows that
\begin{equation}\label{lemmacc}
\omega_{n-2}^\frac{1}{q}\|H\|_{L^q(\mathbb{R}^2_+)}=\|Tf\|_{L^q(\mathbb{R}^{n}_+)}.
\end{equation}
Let any vector lying on $\partial\mathbb{R}^{n}_+$ with length $l$ be denoted by $\vec{l}$, we can calculate that
\begin{equation*}\begin{split}
H(t,x_n)&=e^{\frac{nt}{q}}\int_{\partial\mathbb{R}^{n}_+}(e^tx_n)^\beta\Big|
|\vec{e^t}-y|^2+e^{2t}x_n^2\Big|^{\frac{\alpha-n}{2}}f(y)dy\\
&=e^{\left(\frac{n}{q}+\frac{\alpha-n}{2}+\beta\right)t}\int_{\partial\mathbb{R}^{n}_+}x_n^\beta
\left|e^t(1+x_n^2)+e^{-t}|y|^2-2(\vec{1}\cdot y)\right|^{\frac{\alpha-n}{2}}f(y)dy\\
&=e^{\left(\frac{n}{q}+\frac{\alpha-n}{2}+\beta\right)t}\int_{-\infty}^{+\infty}\int_{\mathbb{S}^{n-2}}x_n^\beta
\left|e^{t-s}(1+x_n^2)+e^{-(t-s)}-2(\vec{1}\cdot \xi)\right|^{\frac{\alpha-n}{2}}\\
&\ \ \  \cdot f(e^s)e^{\left(n-1+\frac{\alpha-n}{2}\right)s}d\xi ds
\end{split}\end{equation*}
Then, thanks to $\frac{n-1}{n}\frac{1}{p}=\frac{\alpha+\beta-1}{n}+\frac{1}{q}$, we can rewrite $H$ as follows:
$$H(t,x_n)=\int_{-\infty}^{+\infty}L(t-s,x_n)F(s)ds,$$
where $L(s,x_n)=e^{\left(\frac{n}{q}+\frac{\alpha-n}{2}+\beta\right)s}Z(s,x_n)$ with
\begin{equation*}
Z(s,x_n)=\begin{cases}
\int_{\mathbb{S}^{n-2}}x_n^\beta
\left|e^{s}(1+x_{n}^{2})+e^{-s}-2(\vec{1}\cdot \xi)\right|^{\frac{\alpha-n}{2}}d\xi, \quad &n\geq3,\\
\\
x_n^\beta\left[\left(e^s(1+x_n^2)+e^{-s}-2\right)^{\frac{\alpha-n}{2}}+\left(e^s(1+x_n^2)+e^{-s}+2\right)^{\frac{\alpha-n}{2}}\right], \quad &n=2.
\end{cases}
\end{equation*}
It is easy to check that
$$L(t,x_n)\sim e^{\left(\frac{n}{q}+\frac{\alpha-n}{2}+\beta\right)t}x_{n}^{\beta}\left(e^{t}\left(1+x_{n}^{2}\right)+e^{-t}\right)^{\frac{\alpha-n}{2}}$$
as $t^2+x_n^2\rightarrow+\infty$. Recall that $\frac{n}{q}+\alpha-n+\beta=\left(\frac{1}{p}-1\right)(n-1)$ with $p\in(0,1)$, we deduce that
\begin{equation}\label{lemmaabc}
\frac{1}{q}+\alpha-n+\beta>\frac{n}{q}+\alpha-n+\beta>0.
\end{equation}
Since $q\beta+1>0$, then for any $r<0$, we have
\begin{equation}\label{lemmadd}
\int_{\mathbb{R}}\left(\int_0^\infty L^q(t,x_n)dx_n\right)^{\frac{r}{q}}dt\leq C_{n,\alpha,\beta,p,r}<+\infty.
\end{equation}

For any $p_1\in(0,p)$, we can choose $r_1$ such that
$$\frac{1}{p_1}+\frac{1}{r_1}=1+\frac{1}{q}.$$
Since $p_1\in(0,1)$ and $q<0$, we obtain that
\begin{equation}\label{lemmaddd}
r_1<0\ \ \text{and}\ \ \frac{q}{r_1}>1.
\end{equation}
It follows from the reversed Young inequality (see Lemma 2.2 in Dou and Zhu \cite{DZ2}) that
$$\int_{\mathbb{R}}|H(t,x_n)|^qdt\leq\left(\int_{\mathbb{R}} |L(t,x_n)|^{r_1}dt\right)^{\frac{q}{r_1}}
\left(\int_{\mathbb{R}} |F(t)|^{p_1}dt\right)^{\frac{q}{p_1}}.$$

By \eqref{lemmad}, \eqref{lemmadd}, \eqref{lemmaddd} and the Minkowski inequality, we have
\begin{equation*}\begin{split}
\int_{\mathbb{R}_{+}^{2}}|H(t,x_n)|^qdtdx_n&\leq\int_0^\infty\left(\int_{\mathbb{R}} |L(t,x_n)|^{r_1}dt\right)^{\frac{q}{r_1}}dx_n
\left(\int_{\mathbb{R}} |F(t)|^{p_1}dt\right)^{\frac{q}{p_1}}\\
&\leq\left(\int_{\mathbb{R}}\left(\int_{0}^{\infty}|L(t,x_n)|^{q}dx_n\right)^{\frac{r_{1}}{q}}dt\right)^{\frac{q}{r_1}}
\left(\int_{\mathbb{R}} |F(t)|^{p_1}dt\right)^{\frac{q}{p_1}}\\
&\leq C_{n,\alpha,\beta,p,p_{1}}\left(\int_{\mathbb{R}} |F(t)|^{p_1-p+p}dt\right)^{\frac{q}{p_1}}\\
&\leq C_{n,\alpha,\beta,p,p_{1}}\|F\|_{L^\infty(\mathbb{R})}^{q\left(1-\frac{p}{p_1}\right)}\left(\int_{\mathbb{R}} |F(t)|^{p}dt\right)^{\frac{q}{p_1}}\\
&\leq C_{n,\alpha,\beta,p,p_{1}}\varepsilon^{q\left(1-\frac{p}{p_1}\right)}\left(\int_{\partial\mathbb{R}^{n}_{+}}|f(x)|^{p}dx\right)^{\frac{q}{p_1}}.\\
\end{split}\end{equation*}
By \eqref{lemmacc}, we conclude that there exists some constant $C_{n,\alpha,\beta,p,p_{1}}>0$ such that
$$\|Tf\|_{L^q(\mathbb{R}^{n}_+)}\geq C_{n,\alpha,\beta,p,p_{1}}\varepsilon^{1-\frac{p}{p_1}}\|f\|^{\frac{p}{p_1}}_{L^p(\partial\mathbb{R}^{n}_+)}.$$
This completes the proof of Lemma \ref{lemsub}.
\end{proof}

\medskip

For convenience, denote $e_1:=(1,0,\cdots,0)\in \partial\mathbb{R}^n_+$, and define
$$a_j :=\sup_{\lambda>0}\lambda^{\frac{n-1}{p}}f_j(\lambda e_1).$$

Notice that, for any $y\in\partial\mathbb{R}^n_+$,
$$f_j(y) = f_j(|y|e_1) = |y|^{-\frac{n-1}{p}}|y|^{\frac{n-1}{p}}f_j(|y| e_1)\leq a_j|y|^{-\frac{n-1}{p}},$$
and $\|Tf_j\|_{L^q(\mathbb{R}^{n}_+)}\rightarrow C^{\ast}_{n,\alpha,\beta,p}<+\infty$. We can deduce from Lemma \ref{lemsub} that, there is a constant $c_{0}>0$ independent of $j$ such that
\begin{equation}\label{genggaif0}
a_j\geq2c_0>0, \qquad \forall \,\, j=1,2,\cdots.
\end{equation}
Define
$$f_j^{\lambda}(y)=\lambda^{\frac{n-1}{p}}f_j(\lambda y).$$
Note that $\frac{n-1}{n}\frac{1}{p}=\frac{\alpha+\beta-1}{n}+\frac{1}{q}$, it is easy to verify that
\begin{equation}\label{genggaif1}
\|f_j^\lambda\|_{L^p(\partial \mathbb{R}^n_+)}=\|f_j\|_{L^p(\partial \mathbb{R}^n_+)}, \qquad \|T(f_j^\lambda)\|_{L^q(\mathbb{R}^n_+)}=\|T(f_j)\|_{L^q(\mathbb{R}^n_+)}.
\end{equation}

For each $j$, by \eqref{genggaif0}, we can choose $\lambda_j$ so that $f_j^{\lambda_j}(e_{1})\geq c_0$. Due to \eqref{genggaif1}, we know that $\{f_j^{\lambda_j}\}^\infty_{j=1}$ is also a minimizing sequence for the minimizing problem \eqref{minimum}. Therefore, we can further assume that there is a nonnegative, radially symmetric and nonincreasing minimizing sequence $\{f_j\}^\infty_{j=1}$ with $\|f_j\|_{L^p(\partial \mathbb{R}^n_+)} =1$ and $f_j(e_1)\geq c_0>0$. Similar to Lieb's argument in Lemma 2.4 in \cite{Lieb}, we know, up to a subsequence, that $f_j\rightarrow f_0$ a.e. in $\partial \mathbb{R}^n_+$.

Now we are to show that the function $f_0$ is indeed a minimizer for \eqref{minimum}.

For all $x\in\mathbb{R}^n_+$, there exists a constant $C>0$ independent of $j$ such that
\begin{equation}\label{zhengshif}
Tf_j(x)\geq C\int_{y\in\partial\mathbb{R}^{n}_{+}, \,|y|\leq1}x_{n}^{\beta}|x-y|^{\alpha-n}dy\geq Cx_n^\beta\left(1+|x|^{\alpha-n}\right):=h(x).
\end{equation}
Let
$$k(x):=\liminf_{j\rightarrow\infty}Tf_j(x).$$
By \eqref{zhengshif}, for any $x\in\mathbb{R}^n_+$, we have
$$k(x)\geq h(x)>0.$$
It follows from Fatou's lemma that
\begin{equation*}\begin{split}
&-\int_{\mathbb{R}^n_+}k^q(x)dx=\int_{\mathbb{R}^n_+}\liminf_{j\rightarrow\infty}\left[-(Tf_j)^q(x)\right]dx \\
&\qquad\qquad\qquad\,\, \leq\liminf_{j\rightarrow\infty}\int_{\mathbb{R}^n_+}\left[-(Tf_j)^q(x)\right]dx \\
&\qquad\qquad\qquad\,\, =-\left(C^*_{n,\alpha,\beta,p}\right)^q.
\end{split}\end{equation*}
Thus we can infer that
\begin{equation}\label{gujik}
\int_{\mathbb{R}^n_+}k^q(x)dx\geq\left(C^*_{n,\alpha,\beta,p}\right)^q>0.
\end{equation}
This implies that $meas\{x\in\mathbb{R}^n_+ \mid 0<k(x)<+\infty\}>0$. Hence we can take a point $x^1\in\mathbb{R}^n_+$, and extract a subsequence of $Tf_j$ (still denoted by $Tf_j$) such that
$$\lim_{j\rightarrow\infty}Tf_j(x^1)=a_1\in(0,+\infty).$$
Repeating the above arguments and extracting a subsequence of $Tf_j$ if necessary, we can choose a point $x^2\in\mathbb{R}^n_+$ such that $x^2\neq x^1$ and $(x^2)_{n}>(x^1)_{n}>0$,
$$\lim_{j\rightarrow\infty}Tf_j(x^2)=a_2\in (0,+\infty).$$
Then, there exists a constant $C_1>0$ such that $Tf_j(x^i)\leq C_1$ for $i=1,2$ and all $j\geq1$. Therefore, for any $j\geq1$,
\begin{equation*}\begin{split}
&\quad \left[(x^1)_{n}\right]^\beta|x^1-x^2|^{\alpha-n}\int_{\partial\mathbb{R}^n_+}f_j(y)dy \\
&\leq\left[(x^1)_{n}\right]^\beta\max\left\{1,2^{\alpha-n-1}\right\}\int_{\partial\mathbb{R}^n_+}|x^1-y|^{\alpha-n}f_j(y)dy \\
&\ \ +\left[(x^2)_{n}\right]^\beta\max\left\{1,2^{\alpha-n-1}\right\}\int_{\partial\mathbb{R}^n_+}|x^2-y|^{\alpha-n}f_j(y)dy \\
&\leq\max\left\{1,2^{\alpha-n-1}\right\}\left(Tf_j(x_1)+Tf_j(x_2)\right)\\
&\leq2C_1\max\left\{1,2^{\alpha-n-1}\right\},\\
\end{split}\end{equation*}
where we have used the following point-wise inequality:
$$|u+v|^{\alpha-n}\leq\max\left\{1,2^{\alpha-n-1}\right\}\left(|u|^{\alpha-n}+|v|^{\alpha-n}\right), \qquad \forall \, u,v\in\mathbb{R}^n.$$
Thus there exists a constant $C_2>0$ such that, for all $j\geq1$,
\begin{equation}\label{zhengshifw1}
\int_{\partial\mathbb{R}^n_+}f_j(y)dy\leq C_2.
\end{equation}

For any $r>2|x^1|$ and $\frac{3}{4}r\leq |y|\leq r$, one can infer that $|x^1-y|\geq \frac{|y|}{3}$. Then, it follows immediately that, for any $j\geq1$,
\begin{equation*}\begin{split}
C_1&\geq Tf_{j}(x^{1})\geq\left[(x^1)_{n}\right]^\beta\int_{y\in\partial\mathbb{R}^{n}_{+}, \, \frac{3}{4}r\leq |y|\leq r}|x^1-y|^{\alpha-n}f_j(y)dy\\
&\geq 3^{n-\alpha}\left[(x^1)_{n}\right]^\beta f_j(r)r^{\alpha-1}\int_{\frac{3}{4}\leq |y|\leq 1}|y|^{\alpha-n}dy.
\end{split}\end{equation*}
Consequently, for any $r>2|x^1|$, there exists a constant $C_3>0$ such that
$$f_j(r)\leq C_{3}r^{1-\alpha}, \qquad \forall \, j\geq1.$$

Recall that $\frac{n-1}{n}\frac{1}{p}-\frac{1}{q}-\frac{\alpha+\beta-1}{n}=0$ and $q\beta+1>0$, for $R>2|x^1|$, we have
\begin{equation}\label{zhengshifw2}\begin{split}
\int_{y\in\partial\mathbb{R}^{n}_{+}, \, |y|\geq R}f^p_j(y)dy&\leq C_3^p\int_{y\in\partial\mathbb{R}^{n}_{+}, \, |y|\geq R}|y|^{p(1-\alpha)}dy\\
&= \frac{\omega_{n-2}C_3^p}{p(\alpha-1)-n+1} R^{p(1-\alpha)+n-1}\\
&= \frac{\omega_{n-2}C_3^p}{p(\alpha-1)-n+1} R^{p(\beta+\frac{n}{q})}.
\end{split}\end{equation}
By \eqref{zhengshifw1}, we have
\begin{equation}\label{zhengshifw3}
\int_{\{f_j\geq R\}}f^p_j(y)dy\leq R^{p-1}\int_{\partial\mathbb{R}^n_+}f_j(y)dy\leq C_2 R^{p-1}.
\end{equation}
For any $\varepsilon>0$, by \eqref{zhengshifw2} and \eqref{zhengshifw3}, one can choose $R_{\varepsilon}>2|x^1|$ sufficiently large such that, for any $R\geq R_{\varepsilon}$,
$$\int_{y\in\partial\mathbb{R}^{n}_{+}, \, |y|\geq R}f^p_j(y)dy<\frac{\varepsilon}{2}, \qquad \int_{\{f_j\geq R\}}f^p_j(y)dy<\frac{\varepsilon}{2}.$$
For every $j\geq1$ and $R\geq R_{\varepsilon}$, define
$$f_{j,R}(y):=\min\{f_j(y), R\}.$$
Then, we have, for any $j\geq1$ and $R\geq R_{\varepsilon}$,
\begin{equation*}\begin{split}
\int_{y\in\partial\mathbb{R}^{n}_{+}, \, |y|\leq R}\left[f_{j,R}(y)\right]^{p}dy&\geq \int_{\{|y|\leq R\}\cap\{f_j\leq R\}}f^p_j(y)dy\\
&= 1-\int_{\{|y|\leq R\}\cap\{f_j>R\}}f^p_j(y)dy-\int_{\{|y|> R\}}f^p_j(y)dy\\
&\geq1-\varepsilon.
\end{split}\end{equation*}

For arbitrarily given $R\geq R_{\varepsilon}$, it follows from the dominated convergence theorem that
$$1-\varepsilon\leq\lim_{j\rightarrow\infty}\int_{y\in\partial\mathbb{R}^{n}_{+}, \, |y|\leq R}\left[f_{j,R}(y)\right]^{p}dy
=\int_{y\in\partial\mathbb{R}^{n}_{+}, \, |y|\leq R}\left(\min\left\{f_{0}(y), R\right\}\right)^{p}dy.$$
Therefore, by letting $R\rightarrow\infty$, we get, for any $\varepsilon>0$,
$$\int_{\partial\mathbb{R}^{n}_{+}}f_{0}^{p}(y)dy\geq1-\varepsilon,$$
which yields immediately that $\int_{\partial\mathbb{R}^{n}_{+}}f_{0}^{p}(y)dy\geq1$. On the other hand, by Fatou's lemma, we have $\int_{\partial\mathbb{R}^{n}_{+}}f_{0}^{p}(y)dy\leq1$. Hence, we arrive at
$$\int_{\partial\mathbb{R}^{n}_{+}}f_{0}^{p}(y)dy=1.$$

It remains to show that $\|Tf_0\|_{L^q( \mathbb{R}^n_+)}=C^*_{n,\alpha,\beta,p}$. By Fatou's Lemma, we have
\begin{equation}\label{zhmend}
Tf_0(x)\leq{\lim\inf}_{j\rightarrow\infty}Tf_j(x)=k(x), \qquad \forall \, x\in\mathbb{R}^{n}_{+}.
\end{equation}
By \eqref{gujik} and \eqref{zhmend}, we arrive at
$$C^*_{n,\alpha,\beta,p}\geq \left(\int_{\mathbb{R}^n_+}k^q(x)dx\right)^{\frac{1}{q}} \geq
\left(\int_{\mathbb{R}^n_+}(Tf_0(x))^qdx\right)^{\frac{1}{q}}\geq C^*_{n,\alpha,\beta,p},$$
which implies that $f_0$ is a minimizer for the minimizing problem \eqref{minimum}.

This completes the proof of Theorem \ref{theorem2sub}.
\end{proof}

\section{The proof of Theorem \ref{theoremfen}}

In this section, we consider the following Euler-Lagrange equation for extremal functions to inequality \eqref{dengjiabudengshi}:
\begin{equation}\label{Euleraa}
f^{p-1}(y)=\int_{\mathbb{R}^n_+}\frac{x_n^\beta}{(|x'-y'|^2+x_n^2)^{\frac{n-\alpha}{2}}}(Tf(x))^{q-1}dx, \qquad \forall \, y\in\partial\mathbb{R}^{n}_{+}.
\end{equation}

Let $u(y)=f^{p-1}(y)$, $v(x)=x_{n}^{-\beta}Tf(x)$, $-\theta=\frac{1}{p-1}$, and $-k=q-1$. Euler-Lagrange equation \eqref{Euleraa} can be rewritten as the following integral system£º
\begin{equation}\label{intsp}\begin{cases}
u(y)=\int_{\mathbb{R}^n_+}\frac{x_n^{\beta(1-k)}}{|x-y|^{n-\alpha}}  v^{-k}(x)dx, \quad y\in\partial\mathbb{R}^n_+,\\ \\
v(x)=\int_{\partial\mathbb{R}^n_+}\frac{1}{|x-y|^{n-\alpha}}u^{-\theta}(y) dy, \quad x\in\mathbb{R}^n_+,
\end{cases}\end{equation}
where $\frac{1}{k-1}=\frac{n-1}{n}(\frac{\alpha+\beta-n}{n-1}-\frac{1}{\theta-1})$ with $\beta(1-k)+1>0$. The definition of $v(x)$ can be extended to $\overline{\mathbb{R}^{n}_{+}}$.

\begin{thm}\label{theoremfenlei}
Let $(u,v)$ be a pair of positive Lebesgue measurable solutions on $\partial\mathbb{R}^n_{+}\times \mathbb{R}^n_{+}$ satisfying the system \eqref{intsp}. Assume that
$$k\leq\frac{n+\alpha+2\beta}{\alpha+2\beta-n}, \qquad \theta\leq\frac{n+\alpha-2}{\alpha-n}.$$
Then, $u$ and $v$ must take the following form:
\begin{equation*}
u(y)=c_1\left(\frac{d}{1+d^{2}|y-z_0|^2}\right)^{\frac{n-\alpha}{2}}, \quad v(y,0)=c_2\left(\frac{d}{1+d^{2}|y-z_0|^2}\right)^{\frac{n-\alpha}{2}}, \qquad \forall \, y\in\partial\mathbb{R}^{n}_{+}
\end{equation*}
for some $z_0\in\partial \mathbb{R}^n_{+}$, $c_1>0$, $c_2>0$ and $d>0$. Furthermore, we have
$$k=\frac{n+\alpha+2\beta}{\alpha+2\beta-n},\ \ \ \theta=\frac{n+\alpha-2}{\alpha-n}.$$
Moreover, in such conformally invariant case, the value of the best constant in inequality \eqref{dengjiabudengshi} is
$$C^*_{n,\alpha,\beta,p}=\left(n\nu_n\right)^{-\frac{n+\alpha-2}{2(n-1)}}\left(\int_{ B_1}\left|\int_{\partial B_1}\left(\frac{1-|\xi|^2}{2}\right)^\beta|\eta-\xi|^{\alpha-n}d\eta\right|^{\frac{2n}{n-\alpha-2\beta}}d\xi\right)^{\frac{n-\alpha-2\beta}{2n}},$$
where $\nu_{n}$ denotes the volume of the unit ball in $\mathbb{R}^{n}$.
\end{thm}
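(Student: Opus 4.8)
The plan is to classify the positive Lebesgue measurable solutions $(u,v)$ of the integral system \eqref{intsp} by a variant of the method of moving spheres, carried out directly on the integral equations so that no regularity lifting is needed. First I would rewrite the system in a form amenable to Kelvin transforms. For $\lambda>0$ and a center $z\in\partial\mathbb{R}^n_+$ set
\begin{equation*}
u_{z,\lambda}(y):=\left(\frac{\lambda}{|y-z|}\right)^{n-\alpha}u\!\left(z+\frac{\lambda^2(y-z)}{|y-z|^2}\right),\qquad
v_{z,\lambda}(x):=\left(\frac{\lambda}{|x-z|}\right)^{n-\alpha}v\!\left(z+\frac{\lambda^2(x-z)}{|x-z|^2}\right),
\end{equation*}
noting that the inversion $x\mapsto z+\lambda^2(x-z)/|x-z|^2$ preserves $\mathbb{R}^n_+$ and $\partial\mathbb{R}^n_+$ when $z\in\partial\mathbb{R}^n_+$. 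Using the change of variables in the two integrals of \eqref{intsp}, together with the identities $|x-y| \,|x^{z,\lambda}-y^{z,\lambda}| \cdot |x-z|^{-1}|y-z|^{-1} = \lambda^{-2}|x-y|$ for the kernel and $x_n \mapsto \lambda^2 x_n/|x-z|^2$ for the weight, one derives integral inequalities comparing $u_{z,\lambda}-u$ and $v_{z,\lambda}-v$ on the region $|y-z|\ge\lambda$, picking up an extra factor that is nonnegative precisely because the exponent condition $\beta(1-k)+1>0$ together with the subcritical hypotheses $k\le\frac{n+\alpha+2\beta}{\alpha+2\beta-n}$, $\theta\le\frac{n+\alpha-2}{\alpha-n}$ forces the ``defect'' integrand to have a favorable sign. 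The key analytic engine is the \emph{Spherically Narrow Region Maximum Principle} in integral form: on a thin spherical shell around $S_\lambda=\{|y-z|=\lambda\}$ the relevant integral operator has small norm, so nonnegativity of $u_{z,\lambda}-u$ and $v_{z,\lambda}-v$ propagates.

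Next I would run the moving-sphere procedure itself. Rather than starting the spheres at infinity (which would require decay estimates and regularity), following the stated strategy I dilate $S_\lambda$ outward \emph{from a neighborhood of a singular point}: one first checks that for $\lambda$ small enough (spheres close to the singularity of $u^{-\theta}$ or of the Kelvin transform) the inequalities $u_{z,\lambda}\ge u$, $v_{z,\lambda}\ge v$ hold on $\{|y-z|\ge\lambda\}$, using the Narrow Region Maximum Principle as the starting point. Define $\bar\lambda(z):=\sup\{\lambda>0 : u_{z,\mu}\ge u \text{ and } v_{z,\mu}\ge v \text{ on } \{|\cdot-z|\ge\mu\}\ \forall\,\mu\le\lambda\}$. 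A standard dichotomy then applies: either $\bar\lambda(z)=+\infty$ for every $z$, or $\bar\lambda(z_0)<\infty$ for some $z_0$. In the first case the usual calculus lemma (if $u_{z,\lambda}\le u$ on the exterior for all $z$ and all $\lambda$, or the reverse, then $u$ is constant or of the stated inverted-quadratic form — here one concludes $u$ and $v$ are constants, which contradicts integrability/finiteness of the defining integrals). In the second case, at $\bar\lambda:=\bar\lambda(z_0)$ one shows the strict inequality must collapse: if $u_{z_0,\bar\lambda}\not\equiv u$ one uses the strong maximum principle version of the integral estimate to get strict positivity of the defect away from $S_{\bar\lambda}$, then invokes the Narrow Region Maximum Principle once more to push $\bar\lambda$ a bit further, contradicting maximality. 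Hence $u_{z_0,\bar\lambda}\equiv u$ and $v_{z_0,\bar\lambda}\equiv v$, which by the classification of functions invariant under a single inversion forces
\begin{equation*}
u(y)=c_1\left(\frac{d}{1+d^2|y-z_0|^2}\right)^{\frac{n-\alpha}{2}},\qquad v(y,0)=c_2\left(\frac{d}{1+d^2|y-z_0|^2}\right)^{\frac{n-\alpha}{2}}.
\end{equation*}
Plugging these explicit forms back into \eqref{intsp} and matching homogeneities (the two integrals must reproduce the same power $(1+d^2|y-z_0|^2)^{-(n-\alpha)/2}$, which is only consistent for one pair of exponents) pins down $k=\frac{n+\alpha+2\beta}{\alpha+2\beta-n}$ and $\theta=\frac{n+\alpha-2}{\alpha-n}$.

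Finally, for the sharp constant: in the conformal case Theorem \ref{theorem2sub} gives an extremal $f$, and $(f^{p-1},x_n^{-\beta}Tf)$ solves \eqref{intsp} up to constants, so by the classification above the extremal $f$ itself has the form \eqref{extremal}. One then computes $C^*_{n,\alpha,\beta,p}=\|Tf\|_{L^q}/\|f\|_{L^p}$ for this $f$. The cleanest route is a stereographic-type conformal change mapping $\partial\mathbb{R}^n_+\cong\mathbb{R}^{n-1}$ to $\partial B_1\subset\mathbb{R}^n$ and $\mathbb{R}^n_+$ to $B_1$; under this map the extremal is pushed to a constant, the weight $x_n^\beta$ becomes $\left(\frac{1-|\xi|^2}{2}\right)^\beta$, the kernel $|x-y|^{\alpha-n}$ becomes $|\eta-\xi|^{\alpha-n}$ up to conformal factors that cancel against the $L^p$ and $L^q$ Jacobians precisely because of the balance condition $\frac{n-1}{n}\frac1p+\frac1q=\frac{\alpha+\beta-1}{n}+1$ (wait: $\frac1q$; the conformal weights are engineered exactly to make the ratio invariant). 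Evaluating the resulting ratio with $f\equiv\text{const}$ on $\partial B_1$ leaves
\begin{equation*}
C^*_{n,\alpha,\beta,p}=\left(n\nu_n\right)^{-\frac{n+\alpha-2}{2(n-1)}}\left(\int_{B_1}\left|\int_{\partial B_1}\left(\frac{1-|\xi|^2}{2}\right)^\beta|\eta-\xi|^{\alpha-n}d\eta\right|^{\frac{2n}{n-\alpha-2\beta}}d\xi\right)^{\frac{n-\alpha-2\beta}{2n}},
\end{equation*}
the factor $(n\nu_n)^{-\frac{n+\alpha-2}{2(n-1)}}$ coming from $|\partial B_1|=n\nu_n$ and the normalization of the constant function in $L^p(\partial B_1)$.

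I expect the main obstacle to be the first case-analysis step: establishing the \emph{sign} of the defect integrand after Kelvin inversion and the validity of the integral Narrow Region Maximum Principle in the presence of the two different ambient spaces ($\partial\mathbb{R}^n_+$ and $\mathbb{R}^n_+$) and the boundary-degenerate weight $x_n^{\beta(1-k)}$ — in particular, controlling the operator norm on thin shells uniformly in $\lambda$ and handling the singularities of the inverted solutions near $z$ without any a priori continuity of $u,v$. This is exactly where the hypothesis $\beta(1-k)+1>0$ and the subcriticality bounds on $k,\theta$ must be used decisively, and it is the point at which the argument departs from Li's \cite{L} by starting from the singularity rather than from infinity.
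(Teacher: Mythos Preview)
Your overall architecture is correct and closely follows the paper: Kelvin transforms centered at $z\in\partial\mathbb{R}^n_+$, integral identities for $u-u_{z,\lambda}$ and $v-v_{z,\lambda}$ with a positive kernel, a starting lemma for small $\lambda$, the definition of $\bar\lambda(z)$, and a dichotomy on whether $\bar\lambda\equiv+\infty$. The subcritical conditions on $k,\theta$ enter exactly where you say, and the sharp-constant computation via the conformal map to the ball is also the paper's route.

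There is, however, a genuine gap at the classification step. You write that once $u_{z_0,\bar\lambda}\equiv u$ and $v_{z_0,\bar\lambda}\equiv v$ for \emph{one} center $z_0$, ``the classification of functions invariant under a single inversion'' forces the explicit form. This is false: invariance under a single sphere inversion imposes only a functional relation along pairs of inverse points and is satisfied by a large family of functions (for instance, any function radial about $z_0$ with the right scaling). The calculus lemmas you need (Li \cite{L}, Lemma~5.8; Li--Zhu \cite{LZ}; Li--Zhang \cite{LZH}) require the identity $u_{z,\bar\lambda(z)}\equiv u$ to hold for \emph{every} $z\in\partial\mathbb{R}^n_+$. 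The paper inserts an extra step you are missing: from a single $\hat z$ with $\bar\lambda(\hat z)<\infty$ and the equality $u_{\hat z,\bar\lambda(\hat z)}\equiv u$, it reads off the exact asymptotic $\lim_{|y|\to\infty}|y|^{n-\alpha}u(y)=\bar\lambda(\hat z)^{n-\alpha}u(\hat z)\in(0,\infty)$, compares this with the inequality $u\le u_{z,\lambda}$ on the exterior valid for all $z$ and $\lambda\le\bar\lambda(z)$, and thereby forces $\bar\lambda(z)<\infty$ for every $z$. Only then does the critical-radius lemma give equality at all centers, and only then does the calculus lemma yield the inverted-quadratic form. You should build this propagation step into your outline.

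Two smaller points. First, you list ``handling the singularities \ldots\ without any a priori continuity of $u,v$'' as the main obstacle, but in fact the paper does establish $u\in C(\partial\mathbb{R}^n_+)$ and $v\in C(\overline{\mathbb{R}^n_+})$ directly from the integral representations and the growth bounds $u\sim 1+|y|^{\alpha-n}$, $v\sim 1+|x|^{\alpha-n}$ (their Lemma~\ref{lem2}); this continuity is then freely used in the narrow-region and continuation arguments. What the paper avoids is the bootstrap to $C^1$, not continuity itself. Second, the forcing of $k=\frac{n+\alpha+2\beta}{\alpha+2\beta-n}$ and $\theta=\frac{n+\alpha-2}{\alpha-n}$ is read off immediately from the integral identities at $\bar\lambda$: since $u\equiv u_{z,\bar\lambda}$ and $v\equiv v_{z,\bar\lambda}$, the integrands in the difference formulas must vanish identically, which is possible only when the exponents $n+\alpha+2\beta-(\alpha+2\beta-n)k$ and $n+\alpha-2-(\alpha-n)\theta$ are zero. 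Your ``plug in and match homogeneities'' would also work but is unnecessary.
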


By applying some new ideas, we classify positive Lebesgue measurable solutions to the system \eqref{intsp} \emph{without lifting the regularity of Lebesgue measurable solutions} and hence prove Theorem \ref{theoremfenlei}. Being different from the arguments in \cite{L}, we dilate the spheres $S_\lambda$ from the neighborhood of singular points to the limiting radius $\bar{\lambda}$. By exploiting the \emph{Spherically Narrow Region Maximum Principle} in integral forms, we use a slight variant of the method of moving spheres to classify all positive Lebesgue measurable solutions to integral system \eqref{intsp}.

\medskip

To this end, we give some necessary notations. For any $\lambda>0$, denote
$$B_{\lambda}(x):=\{y\in \mathbb{R}^n \mid |y-x|<\lambda,\ x\in \mathbb{R}^n\},$$
$$B_{\lambda}^{n-1}(x):=\{y\in \partial\mathbb{R}^n_{+} \mid \ |y-x|<\lambda,\ x\in \partial\mathbb{R}^n_{+}\},$$
$$B_{\lambda}^{+}(x):=\{y=(y_1,y_2,\cdots,y_n)\in B_{\lambda}(x) \mid \ y_n>0,\ x\in \partial\mathbb{R}^n_{+}\}.$$

For $z\in\partial\mathbb{R}^n_{+}$ and $\lambda>0$, set
$$x^{z,\lambda}:=\frac{\lambda^2(x-z)}{|x-z|^2}+z, \qquad y^{z,\lambda}:=\frac{\lambda^2(y-z)}{|y-z|^2}+z,$$
where $x\in \mathbb{R}^n_{+}$ and $y\in \partial\mathbb{R}^n_{+}\setminus\{z\}$. For any such $x$, $y$ and $z$, we have
\begin{equation}\label{equ2.1}
|x^{z,\lambda}-y^{z,\lambda}|=\frac{\lambda^2|x-y|}{|x-z||y-z|},
\end{equation}
\begin{equation}\label{equ2.2}
|y-z||x-y^{z,\lambda}|=|x-z||x^{z,\lambda}-y|,
\end{equation}
and
\begin{equation}\label{equ2.3}
\left(x^{z,\lambda}\right)_n=\left(\frac{\lambda}{|x-z|}\right)^2x_n.
\end{equation}
Let $(u,v)$ be a pair of positive functions defined on $\partial\mathbb{R}^n_{+}\times \mathbb{R}^n_{+}$. Define the Kelvin-type transforms
\begin{equation*}
u_{z,\lambda}(y)=\left(\frac{\lambda}{|y-z|}\right)^{n-\alpha}u(y^{z,\lambda}), \qquad \forall \, y\in \partial\mathbb{R}^n_{+}\setminus\{z\}
\end{equation*}
and
\begin{equation*}
v_{z,\lambda}(x)=\left(\frac{\lambda}{|x-z|}\right)^{n-\alpha}v(x^{z,\lambda}), \qquad \forall \, x\in \mathbb{R}^n_{+}.
\end{equation*}

\begin{lem}\label{lemma1}
Assume that $(u,v)$ is a pair of positive Lebesgue measurable solutions satisfying the system \eqref{intsp}. Then, for any $z\in\partial\mathbb{R}^n_{+}$ and $\lambda>0$,
\begin{equation}\begin{split}\label{equ11}
&\quad u(y)-u_{z,\lambda}(y)\\
&=\int_{B^{+}_{\lambda}(z)}K(z,\lambda,y,x)x_{n}^{\beta(1-k)}\left(\left(\frac{\lambda}{|x-z|}\right)^{n+\alpha+2\beta-(\alpha+2\beta-n)k}
v^{-k}_{z,\lambda}(x)-v^{-k}(x)\right)dx\\
\end{split}\end{equation}
for all $y\in \partial\mathbb{R}^n_{+}\setminus\{z\}$ and
\begin{equation}\label{equ12}\begin{split}
&\quad v(x)-v_{z,\lambda}(x)\\
&=\int_ {B^{n-1}_{\lambda}(z)}K(z,\lambda,y,x)\left(\left(\frac{\lambda}{|y-z|}\right)^{n+\alpha-2-(\alpha-n)\theta}
u^{-\theta}_{z,\lambda}(y)-u^{-\theta}(y)\right)dy\\
\end{split}\end{equation}
for all $x\in\mathbb{R}^n_{+}$, where
$$K(z,\lambda,y,x):=\left(\frac{\lambda}{|x-z|}\right)^{n-\alpha}\frac{1}{|x^{z,\lambda}-y|^{n-\alpha}}-\frac{1}{|x-y|^{n-\alpha}}.$$
Furthermore, for any $z\in\partial\mathbb{R}^n_{+}$ and $\lambda>0$,
\begin{equation}\label{equ224}
K(z,\lambda,y,x)>0, \qquad \forall \, x\in B^{+}_{\lambda}(z), \ y\in B^{n-1}_{\lambda}(z)\setminus\{z\}.
\end{equation}
\end{lem}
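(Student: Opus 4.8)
The proof proceeds in three stages: deriving the integral representations \eqref{equ11} and \eqref{equ12}, then establishing the positivity \eqref{equ224} of the kernel $K(z,\lambda,y,x)$ on the relevant region. First I would fix $z\in\partial\mathbb{R}^n_+$ and $\lambda>0$ and compute $u_{z,\lambda}(y)$ directly from the first equation of \eqref{intsp}. Writing $u(y^{z,\lambda})=\int_{\mathbb{R}^n_+}|x-y^{z,\lambda}|^{\alpha-n}x_n^{\beta(1-k)}v^{-k}(x)\,dx$, I would perform the change of variables $x\mapsto x^{z,\lambda}$ (the inversion about the sphere $S_\lambda(z)$), which maps $\mathbb{R}^n_+$ to itself, and use the identities \eqref{equ2.1}--\eqref{equ2.3} together with the Jacobian $dx^{z,\lambda}=\left(\frac{\lambda}{|x-z|}\right)^{2n}dx$. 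Applying \eqref{equ2.2} in the form $|x^{z,\lambda}-y^{z,\lambda}|\cdot\frac{|x-z|}{\lambda}\cdot\frac{|y-z|}{\lambda}=|x-y|$ converted appropriately, and \eqref{equ2.3} for the $x_n$ factor, one collects all powers of $\frac{\lambda}{|x-z|}$; the exponent bookkeeping is exactly what produces the factor $\left(\frac{\lambda}{|x-z|}\right)^{n+\alpha+2\beta-(\alpha+2\beta-n)k}$ multiplying $v^{-k}_{z,\lambda}(x)$. Subtracting the original expression for $u(y)$ and recognizing that the two kernels differ by $K(z,\lambda,y,x)$ gives \eqref{equ11} once one checks that the contribution from $\mathbb{R}^n_+\setminus B^+_\lambda(z)$ cancels (since on that region $|x-z|\geq\lambda$ and the inversion maps it to $B^+_\lambda(z)$, so the integral over the exterior transforms into an integral over the interior with the Kelvin-transformed integrand, and the two pieces combine into the single integral over $B^+_\lambda(z)$ with the difference $(\cdots)v^{-k}_{z,\lambda}-v^{-k}$). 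The derivation of \eqref{equ12} is entirely parallel, using the second equation of \eqref{intsp} and the identity \eqref{equ2.2} in its stated form, with the exponent $n+\alpha-2-(\alpha-n)\theta$ arising because $y\in\partial\mathbb{R}^n_+$ contributes an $(n-1)$-dimensional Jacobian $\left(\frac{\lambda}{|y-z|}\right)^{2(n-1)}$ rather than $2n$.

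For the positivity \eqref{equ224}, I would compare $\left(\frac{\lambda}{|x-z|}\right)^{n-\alpha}|x^{z,\lambda}-y|^{\alpha-n}$ with $|x-y|^{\alpha-n}$ for $x\in B^+_\lambda(z)$ and $y\in B^{n-1}_\lambda(z)\setminus\{z\}$. Since $\alpha-n>0$, we have $t^{\alpha-n}$ increasing, so $K>0$ is equivalent to $\frac{\lambda^2|x^{z,\lambda}-y|^2}{|x-z|^2}>|x-y|^2$, i.e. (multiplying out, using $|x^{z,\lambda}-z|=\lambda^2/|x-z|$)
\begin{equation*}
\lambda^2\left(\frac{\lambda^2}{|x-z|^2}-\frac{2\lambda^2(x-z)\cdot(y-z)}{|x-z|^2|y-z|^2}\cdot\frac{|y-z|^2}{\lambda^2}\cdot\frac{1}{?}+\cdots\right)
\end{equation*}
— more cleanly, a standard algebraic identity gives
\begin{equation*}
\frac{\lambda^2}{|x-z|^2}|x^{z,\lambda}-y|^2-|x-y|^2=\left(1-\frac{\lambda^2}{|x-z|^2}\right)\left(\frac{\lambda^2}{|y-z|^2}-1\right)\frac{|y-z|^2}{?},
\end{equation*}
so I would instead just verify the clean factorization
$$|x-y|^2|x-z|^2-\lambda^2|x^{z,\lambda}-y|^2=\Big(|x-z|^2-\lambda^2\Big)\Big(1-\frac{\lambda^2}{|y-z|^2}\Big)|y-z|^2\cdot\frac{1}{|y-z|^2}$$
after expanding $|x^{z,\lambda}-y|^2=\frac{\lambda^4}{|x-z|^2}-\frac{2\lambda^2(x-z)\cdot(y-z)}{|x-z|^2}+|y-z|^2$ (note $y,z\in\partial\mathbb{R}^n_+$, so $(x^{z,\lambda}-z)\cdot(y-z)=\frac{\lambda^2}{|x-z|^2}(x-z)\cdot(y-z)$ and the $x_n$ component of $y-z$ vanishes). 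Since $x\in B^+_\lambda(z)$ forces $|x-z|<\lambda$ and $y\in B^{n-1}_\lambda(z)$ forces $|y-z|<\lambda$, both factors on the right are positive, hence $K(z,\lambda,y,x)>0$.

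The main obstacle is the first stage: carefully handling the change of variables over the \emph{unbounded} domain $\mathbb{R}^n_+$ and showing the exterior contribution recombines correctly, so that the final representation involves only the bounded region $B^+_\lambda(z)$ (resp. $B^{n-1}_\lambda(z)$) and the \emph{difference} of the Kelvin-transformed and original solutions rather than two separate terms. This requires that all the integrals converge (which follows from $(u,v)$ being a genuine solution of \eqref{intsp}, so the defining integrals are finite a.e.) and that Fubini/change-of-variables is legitimate; the exponent on the factor $\left(\frac{\lambda}{|x-z|}\right)$ in front of $v^{-k}_{z,\lambda}$ must be tracked through the substitution without error, since it is precisely this exponent — and whether it is $\geq0$ or $\leq0$ under the hypotheses $k\leq\frac{n+\alpha+2\beta}{\alpha+2\beta-n}$, $\theta\leq\frac{n+\alpha-2}{\alpha-n}$ — that drives the subsequent moving-sphere argument. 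The positivity claim \eqref{equ224}, by contrast, is a purely elementary identity once the right factorization is spotted.
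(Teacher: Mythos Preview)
Your plan for \eqref{equ11} and \eqref{equ12} is exactly the paper's argument: split the integral for $u(y)$ into $B^+_\lambda(z)$ and its complement, change variables $x\mapsto x^{z,\lambda}$ on the exterior piece using \eqref{equ2.1}--\eqref{equ2.3} and the Jacobian $(\lambda/|x-z|)^{2n}$ to fold it back into $B^+_\lambda(z)$, do the analogous computation for $u_{z,\lambda}(y)$ via \eqref{equ2.2}, and subtract; the exponent bookkeeping you describe is precisely what the paper records in \eqref{lemmadeduce1}--\eqref{lemmadeduce2}.

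Your positivity argument has the right idea but the algebra is garbled. First, the equivalence is inverted: since $n-\alpha<0$, one has $\left(\frac{\lambda}{|x-z|}\right)^{n-\alpha}=\left(\frac{|x-z|}{\lambda}\right)^{\alpha-n}$, so $K>0$ is equivalent to $\frac{|x-z|^2}{\lambda^2}\,|x^{z,\lambda}-y|^2>|x-y|^2$, not $\frac{\lambda^2}{|x-z|^2}\,|x^{z,\lambda}-y|^2>|x-y|^2$. Second, the clean factorization (which is what the paper uses) is
\[
|x-z|^2\,|x^{z,\lambda}-y|^2-\lambda^2\,|x-y|^2=\big(\lambda^2-|x-z|^2\big)\big(\lambda^2-|y-z|^2\big),
\]
obtained by expanding $|x^{z,\lambda}-y|^2=\frac{\lambda^4}{|x-z|^2}-\frac{2\lambda^2}{|x-z|^2}(x-z)\cdot(y-z)+|y-z|^2$; your displayed identity has stray $|y-z|^{\pm2}$ factors and the wrong sign on the left. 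Finally, for $x\in B^+_\lambda(z)$ and $y\in B^{n-1}_\lambda(z)\setminus\{z\}$ both factors $\lambda^2-|x-z|^2$ and $\lambda^2-|y-z|^2$ are \emph{positive} (not ``both factors \dots are positive'' referring to your negative factors), which gives $K>0$ directly. Once these slips are corrected your argument coincides with the paper's.
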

\begin{proof}
We first prove \eqref{equ11}. For $z\in\partial\mathbb{R}^n_{+}$ and $\lambda>0$, using the change of variable $x\mapsto x^{z,\lambda}$ and \eqref{equ2.3}, we obtain
\begin{equation*}
\int_{\mathbb{R}^{n}_{+}\setminus B_{\lambda}^{+}(z)}\frac{x_n^{\beta(1-k)}v^{-k}(x)}{|x-y|^{n-\alpha}}dx=\int_{B_{\lambda}^{+}(z)}\left(\frac{\lambda}{|x-z|}\right)^{2n+2\beta(1-k)}
\frac{x_n^{\beta(1-k)} v^{-k}(x^{z,\lambda})}{|x^{z,\lambda}-y|^{n-\alpha}}dx.
\end{equation*}
For any $y\in \partial \mathbb{R}^{n}_{+}$, by \eqref{intsp}, one can write
\begin{equation}\begin{split}\label{lemmadeduce1}
u(y)&=\int_{B_{\lambda}^{+}(z)}\frac{x_n^{\beta(1-k)}
v^{-k}(x)}{|x-y|^{n-\alpha}}dx+\int_{B_{\lambda}^{+}(z)}\left(\frac{\lambda}{|x-z|}\right)^{2n+2\beta(1-k)}\frac{x_n^{\beta(1-k)} v^{-k}(x^{z,\lambda})}{|x^{z,\lambda}-y|^{n-\alpha}}dx\\
&=\int_{B_{\lambda}^{+}(z)}\frac{x_n^{\beta(1-k)}v^{-k}(x)}{|x-y|^{n-\alpha}}dx+\int_{B_{\lambda}^{+}(z)}
\left(\frac{\lambda}{|x-z|}\right)^{2n+2\beta-(\alpha+2\beta-n)k}\frac{x_n^{\beta(1-k)}v^{-k}_{z,\lambda}(x)}{|x^{z,\lambda}-y|^{n-\alpha}}dx.\\
\end{split}\end{equation}
Consequently, using \eqref{equ2.1} and \eqref{equ2.2}, we have, for any $y\in \partial \mathbb{R}^{n}_{+}\setminus\{z\}$,
\begin{equation}\begin{split}\label{lemmadeduce2}
u_{z,\lambda}(y)&=\left(\frac{\lambda}{|y-z|}\right)^{n-\alpha}\int_{B_{\lambda}^{+}(z)}\frac{x_n^{\beta(1-k)}v^{-k}(x)}{|x-y^{z,\lambda}|^{n-\alpha}}dx\\
&\ \ +\left(\frac{\lambda}{|y-z|}\right)^{n-\alpha}\int_{B_{\lambda}^{+}(z)}\left(\frac{\lambda}{|x-z|}\right)^{2n+2\beta
-(\alpha+2\beta-n)k}\frac{x_n^{\beta(1-k)}v^{-k}_{z,\lambda}(x)}{|x^{z,\lambda}-y^{z,\lambda}|^{n-\alpha}}dx\\
&=\int_{B_{\lambda}^{+}(z)}\left(\frac{\lambda}{|x-z|}\right)^{n-\alpha}\frac{x_n^{\beta(1-k)}v^{-k}(x)}{|x^{z,\lambda}-y|^{n-\alpha}}dx\\
&\ \ +\int_{B_{\lambda}^{+}(z)}\left(\frac{\lambda}{|x-z|}\right)^{n+\alpha+2\beta-(\alpha+2\beta-n)k}\frac{x_n^{\beta(1-k)}
v^{-k}_{z,\lambda}(x)}{|x-y|^{n-\alpha}}dx.
\end{split}\end{equation}
Combining \eqref{lemmadeduce1} with \eqref{lemmadeduce2}, we deduce that, for all $y\in \partial\mathbb{R}^n_{+}\setminus\{z\}$,
\begin{equation*}\begin{split}
&\quad u(y)-u_{z,\lambda}(y)\\
&=\int_{B^{+}_{\lambda}(z)}K(z,\lambda,y,x)x_{n}^{\beta(1-k)}\left(\left(\frac{\lambda}{|x-z|}\right)^{n+\alpha+2\beta-(\alpha+2\beta-n)k}
v^{-k}_{z,\lambda}(x)-v^{-k}(x)\right)dx,
\end{split}\end{equation*}
where
\begin{equation}\label{kernelp}
K(z,\lambda,y,x):=\left(\frac{\lambda}{|x-z|}\right)^{n-\alpha}\frac{1}{|x^{z,\lambda}-y|^{n-\alpha}}-\frac{1}{|x-y|^{n-\alpha}}.
\end{equation}

Similarly, for any $x\in\mathbb{R}^n_{+}$, one can derive that
\begin{equation*}\begin{split}
&\quad v(x)-v_{z,\lambda}(x)\\
&=\int_ {B^{n-1}_{\lambda}(z)}K(z,\lambda,y,x)\left(\left(\frac{\lambda}{|y-z|}\right)^{n+\alpha-2-(\alpha-n)\theta}
u^{-\theta}_{z,\lambda}(y)-u^{-\theta}(y)\right)dy.
\end{split}\end{equation*}

Next, we show that
$$K(z,\lambda,y,x)>0, \qquad  \forall \, x\in B^{+}_{\lambda}(z), \ y\in B^{n-1}_{\lambda}(z)\setminus\{z\}.$$
By \eqref{kernelp}, it suffices to verify that
$$|x-z|^{n-\alpha}|x^{z,\lambda}-y|^{n-\alpha}<\lambda^{n-\alpha}|x-y|^{n-\alpha},$$
which is equivalent to prove that
\begin{equation*}\begin{split}
&|x-z|^{2}|x^{z,\lambda}-y|^{2}-\lambda^{2}|x-y|^{2}\\
&=|x-z|^2\left[\frac{\lambda^4}{|x-z|^2}-2\lambda^2\cdot\frac{(x-z)(y-z)}{|x-z|^2}
+|y-z|^2\right]-\lambda^2|x-y|^2\\
&=\left(\lambda^2-|x-z|^2\right)\left(\lambda^2-|y-z|^2\right)\\
&>0.
\end{split}\end{equation*}
This completes the proof of Lemma \ref{lemma1}.
\end{proof}

\medskip

We use the idea of Li \cite{L} to prove the following lemma.
\begin{lem}\label{lem2}
Assume that $(u,v)$ is a pair of positive Lebesgue measurable solutions on $\partial\mathbb{R}^n_{+}\times \mathbb{R}^n_{+}$ satisfying the system \eqref{intsp}, then

\noindent (i) $\int_{\partial \mathbb{R}^n_{+}}(1+|y|^{\alpha-n}) u^{-\theta}(y)dy<+\infty$ and $\int_{\mathbb{R}^n_{+}}x_n^{\beta(1-k)}(1+|x|^{\alpha-n}) v^{-k}(x)dx<+\infty$;

\noindent (ii) there exist constants $C_1\geq1$ and $C_2\geq1$ such that
$$\frac{1}{C_1}(1+|y|^{\alpha-n})\leq u(y)\leq C_1(1+|y|^{\alpha-n}),$$
$$\frac{1}{C_2}(1+|x|^{\alpha-n})\leq v(x)\leq C_2(1+|x|^{\alpha-n});$$

\noindent (iii) the following asymptotic properties hold:
$$\lim_{|y|\rightarrow\infty}\frac{u(y)}{|y|^{\alpha-n}}=\int_{\mathbb{R}^n_{+}}x_n^{\beta(1-k)}v^{-k}(x)dx<+\infty,$$ $$\lim_{|x|\rightarrow\infty}\frac{v(x)}{|x|^{\alpha-n}}=\int_{\partial\mathbb{R}^n_{+}}u^{-\theta}(y)dy<+\infty.$$

Furthermore, $(u,v)\in C(\partial\mathbb{R}^n_{+})\times C(\overline{\mathbb{R}^n_{+}})$.
\end{lem}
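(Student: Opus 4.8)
The plan is to lean entirely on the structural feature $\alpha>n$: unlike the classical Hardy--Littlewood--Sobolev kernel, the kernel $|x-y|^{\alpha-n}$ here has \emph{no singularity} on the diagonal (it vanishes there) but \emph{grows} at infinity like $|x|^{\alpha-n}$. Hence every estimate in the lemma will be produced by the two elementary facts that $(a+b)^{\alpha-n}\le C_{\alpha,n}(a^{\alpha-n}+b^{\alpha-n})$ and that $|x|/2\le|x-y|\le 2|x|$ whenever $|x|\ge 2|y|$, together with the observation that, being a genuine pair of solutions, $u$ and $v$ are finite at almost every point. The recurring principle is that once the two weighted integrals in (i) are known to be finite, inserting $|x-y|^{\alpha-n}\le C(|x|^{\alpha-n}+|y|^{\alpha-n})$ into \eqref{intsp} immediately produces the \emph{upper} bounds of (ii) and, via dominated convergence, both the finiteness in (iii) and the continuity.

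First I would prove (i). Pick $x^{0}\in\mathbb{R}^{n}_{+}$ with $v(x^{0})<\infty$. Since $(x^{0})_{n}>0$ and $\alpha-n>0$, one has $|x^{0}-y|^{\alpha-n}\ge\big((x^{0})_{n}\big)^{\alpha-n}$ for all $y\in\partial\mathbb{R}^{n}_{+}$, while $|x^{0}-y|^{\alpha-n}\ge c\,|y|^{\alpha-n}$ once $|y|\ge 2|x^{0}|$; feeding these into the second equation of \eqref{intsp} at $x=x^{0}$ gives at once $\int_{\partial\mathbb{R}^{n}_{+}}(1+|y|^{\alpha-n})u^{-\theta}(y)\,dy<\infty$. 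For the other integral, fix $\bar y\in\partial\mathbb{R}^{n}_{+}$ with $|\bar y|\ge 1$ and $\bar y_{1}\in\partial\mathbb{R}^{n}_{+}$ with $3|\bar y|\le|\bar y_{1}|\le 5|\bar y|$, both chosen where $u$ is finite (possible since $u<\infty$ a.e.). Evaluating the first equation of \eqref{intsp} at $y=\bar y$ and using $|x-\bar y|\ge|x|/2$ for $|x|\ge 2|\bar y|$ bounds $\int_{\{|x|\ge 2|\bar y|\}}|x|^{\alpha-n}x_{n}^{\beta(1-k)}v^{-k}\,dx$; evaluating it at $y=\bar y_{1}$ and using $|x-\bar y_{1}|\ge|\bar y|>0$ on $\{|x|\le 2|\bar y|\}$ bounds $\int_{\{|x|\le 2|\bar y|\}}x_{n}^{\beta(1-k)}v^{-k}\,dx$. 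A trivial splitting of $\mathbb{R}^{n}_{+}$ into these two regions then gives $\int_{\mathbb{R}^{n}_{+}}x_{n}^{\beta(1-k)}(1+|x|^{\alpha-n})v^{-k}(x)\,dx<\infty$, completing (i) (and showing en passant that $u$ and $v$ are in fact finite \emph{everywhere}).

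Next, the upper bounds in (ii) are, as noted, immediate from (i) and $|x-y|^{\alpha-n}\le C(|x|^{\alpha-n}+|y|^{\alpha-n})$. For the lower bounds I would restrict the integral defining $u(y)$ to a fixed compact set $E\subset\mathbb{R}^{n}_{+}$ of positive Lebesgue measure, on which $m_{E}:=\int_{E}x_{n}^{\beta(1-k)}v^{-k}\,dx\in(0,\infty)$; since $|x-y|\ge\mathrm{dist}(E,\partial\mathbb{R}^{n}_{+})>0$ for every $y\in\partial\mathbb{R}^{n}_{+}$ and $|x-y|\ge|y|/2$ for $|y|$ large, this yields $u(y)\ge c\,(1+|y|^{\alpha-n})$. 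The same device, now restricting the $y$-integral for $v(x)$ to $\{|y|\le 2\}\cap\partial\mathbb{R}^{n}_{+}$ for the growth and to the annulus $\{5\le|y|\le 6\}\cap\partial\mathbb{R}^{n}_{+}$ to keep the kernel bounded below when $|x|$ is bounded, gives $v(x)\ge c\,(1+|x|^{\alpha-n})$ on $\overline{\mathbb{R}^{n}_{+}}$. Part (iii) then follows by dominated convergence: writing $u(y)/|y|^{\alpha-n}=\int_{\mathbb{R}^{n}_{+}}(|x-y|/|y|)^{\alpha-n}x_{n}^{\beta(1-k)}v^{-k}(x)\,dx$, the integrand converges pointwise to $x_{n}^{\beta(1-k)}v^{-k}(x)$ and is dominated, uniformly for $|y|\ge 1$, by $C(1+|x|^{\alpha-n})x_{n}^{\beta(1-k)}v^{-k}(x)\in L^{1}$ from (i); the limit is finite by (i) and strictly positive since $v^{-k}>0$. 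The statement for $v$ is identical.

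Finally, continuity: if $y_{j}\to y$ then $|x-y_{j}|^{\alpha-n}\to|x-y|^{\alpha-n}$ pointwise and is bounded, for $j$ large, by $C(1+|x|^{\alpha-n})x_{n}^{\beta(1-k)}v^{-k}(x)\in L^{1}$, so $u\in C(\partial\mathbb{R}^{n}_{+})$; for $v$ one observes in addition that $|x-y|^{\alpha-n}=(|x'-y|^{2}+x_{n}^{2})^{(\alpha-n)/2}$ is jointly continuous in $x$ up to $\{x_{n}=0\}$ precisely because $\alpha-n>0$ removes the would-be singularity, so $v\in C(\overline{\mathbb{R}^{n}_{+}})$. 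The only genuinely delicate point in the whole argument is the near-diagonal half of (i): because the kernel vanishes on the diagonal, the finiteness of $u$ at a single point gives \emph{no} control of $\int_{\{|x|\le 2|\bar y|\}}x_{n}^{\beta(1-k)}v^{-k}$, and one must bring in the value of $u$ at a \emph{second} point placed far from the relevant ball so that the kernel stays bounded below there; everything else reduces to bookkeeping with $(a+b)^{\alpha-n}\le C(a^{\alpha-n}+b^{\alpha-n})$ and the comparison $|x-y|\sim|x|$ at infinity.
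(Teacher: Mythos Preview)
Your proposal is correct and follows the same basic strategy as the paper---exploit that $\alpha>n$ so the kernel $|x-y|^{\alpha-n}$ has no diagonal singularity but grows at infinity, then combine finiteness of $u,v$ at generic points with dominated convergence. The main organisational difference is the order: the paper first proves the \emph{lower} bounds in (ii) (by restricting to a set $E$ where $v<R$, so $v^{-k}\ge R^{-k}$), and then uses the bound $v\ge 1/C_{2}$ together with $\beta(1-k)+1>0$ to control the annular piece $\{|y^{0}|/2\le |x|\le 2|y^{0}|\}$ in its splitting for (i). You instead prove (i) first, handling the troublesome near region for the $v^{-k}$ integral by your two-point device---evaluating $u$ at a second boundary point $\bar y_{1}$ placed well outside $B_{2|\bar y|}$ so that the kernel stays bounded below there---and only afterwards derive the lower bounds (where now the finiteness of $m_{E}$ is guaranteed by (i)). Your route is slightly cleaner in that it avoids the apparent circularity of needing the lower bound on $v$ inside the proof of (i), and it does not explicitly invoke the hypothesis $\beta(1-k)+1>0$ at that step; the paper's route has the small advantage that the set $E$ where $v<R$ gives the lower bound on $u$ without first knowing (i). For the continuity, the paper writes out an explicit H\"older/Lipschitz modulus for $|x-y|^{\alpha-n}$, whereas your dominated-convergence argument is shorter and equally valid. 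None of these differences is substantive; both proofs rest on exactly the ingredients you identify.
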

\begin{proof}
Since $(u,v)$ is a pair of positive Lebesgue measurable solutions of \eqref{intsp}, we have
\begin{equation*}
\text{meas}\{y\in\partial \mathbb{R}^n_{+} \mid u(y)<+\infty\}>0,\ \ \ \text{meas}\{x\in\mathbb{R}^n_{+} \mid v(x)<+\infty\}>0.
\end{equation*}
Moreover, there exist $R>1$ large enough and a measurable set $E$ such that
\begin{equation*}
E\subset\{x\in\mathbb{R}^n_{+} \mid v(x)<R\}\cap B_R^{+}(0)
\end{equation*}
with $meas(E)>\frac{1}{R}$.

For any $y\in\partial \mathbb{R}^n_{+}$, due to $\beta\geq0$ and $k>1$, we have
\begin{equation*}\begin{split}
u(y)&=\int_{\mathbb{R}^n_{+}}x_n^{\beta(1-k)}|x-y|^{\alpha-n} v^{-k}(x)dx\\
&\geq \int_{E}x_n^{\beta(1-k)}|x-y|^{\alpha-n}v^{-k}(x)dx\\
&\geq R^{-k+\beta(1-k)}\int_{E}|x-y|^{\alpha-n} dx.
\end{split}\end{equation*}
Since $\alpha>n$, there exists a constant $C_1\geq1$ such that
$$u(y)\geq\frac{1+|y|^{\alpha-n}}{C_1}, \qquad \forall \, y\in\partial \mathbb{R}^n_{+}.$$
Similarly, for any $x\in \mathbb{R}^n_{+}$, we have
\begin{equation}\label{vguji}
v(x)\geq\frac{1+|x|^{\alpha-n}}{C_2}.
\end{equation}
Thus we have obtained the left hand sides of the two inequalities in (ii).

\medskip

On the other hand, there exists a point $y^0\in\partial\mathbb{R}^{n}_{+}\setminus\{0\}$ such that
\begin{equation}\label{eq-a2}
  u(y^0)=\int_{\mathbb{R}^n_{+}}x_n^{\beta(1-k)}|x-y^0|^{\alpha-n}v^{-k}(x)dx<+\infty.
\end{equation}
Hence, one has
\begin{equation*}\begin{split}
&\quad \int_{\mathbb{R}^n_{+}}x_n^{\beta(1-k)}(1+|x|^{\alpha-n})v^{-k}(x)dx\\
&\leq C_{y^0}\int_{|x|<\frac{1}{2}|y^0|}x_n^{\beta(1-k)}|x-y^0|^{\alpha-n} v^{-k}(x)dx
+C_{y^0}\int_{|x|>2|y^0|}x_n^{\beta(1-k)}|x-y^0|^{\alpha-n} v^{-k}(x)dx \\
&\quad +\int_{\frac{1}{2}|y^0|\leq|x|\leq2|y^0|}x_n^{\beta(1-k)}(1+|x|^{\alpha-n})v^{-k}(x)dx,
\end{split}\end{equation*}
combining this with \eqref{vguji}, \eqref{eq-a2} and $(1-k)\beta+1>0$, we obtain the second formula in (i). The first formula in (i) can be proved similarly.

\medskip

By (i), we have, for any $y\in \partial\mathbb{R}^n_+$,
\begin{equation}\label{eq-a0}\begin{split}
\frac{u(y)}{1+|y|^{\alpha-n}}&=\int_{\mathbb{R}^n_{+}}\frac{|x-y|^{\alpha-n}}{1+|y|^{\alpha-n}}x_n^{\beta(1-k)}v^{-k}(x)dx\\
&\leq \int_{\mathbb{R}^n_{+}} x_n^{\beta(1-k)}(1+|x|^{\alpha-n}) v^{-k}(x)dx<+\infty.
\end{split}\end{equation}
Similarly, from (i), we infer that, for any $x\in \mathbb{R}^n_+$,
\begin{equation}\label{eq-a1}\begin{split}
\frac{v(x)}{1+|x|^{\alpha-n}}&=\int_{\partial\mathbb{R}^n_{+}}\frac{|y-x|^{\alpha-n}}{1+|x|^{\alpha-n}}u^{-\theta}(y)dy\\
&\leq \int_{\partial\mathbb{R}^n_{+}}(1+|y|^{\alpha-n})u^{-\theta}(y)dy<+\infty.
\end{split}\end{equation}
Then the right hand sides of the two inequalities in (ii) follow from \eqref{eq-a0} and \eqref{eq-a1}.

\medskip

Next, we will show the asymptotic behaviors of $u$ and $v$ near infinity in (iii).

For any $x\in\mathbb{R}^{n}_{+}$ with $|x|>1$ and $y\in\partial\mathbb{R}^{n}_{+}$ with $|y|>1$, by (i) and (ii), we have
\begin{equation*}\begin{split}
\frac{u(y)}{|y|^{\alpha-n}}&=|y|^{n-\alpha}\int_{\mathbb{R}^n_{+}}x_n^{\beta(1-k)}|x-y|^{\alpha-n}v^{-k}(x)dx\\
&\leq C \int_{\mathbb{R}^n_{+}}x_n^{\beta(1-k)}(1+|x|^{\alpha-n})v^{-k}(x)dx<+\infty
\end{split}\end{equation*}
and
\begin{equation*}\begin{split}
\frac{v(x)}{|x|^{\alpha-n}}&=|x|^{n-\alpha}\int_{\partial \mathbb{R}^n_{+}}|x-y|^{\alpha-n}u^{-\theta}(y)dy\\
&\leq C \int_{\partial\mathbb{R}^n_{+}}(1+|y|^{\alpha-n})u^{-\theta}(y)dy<+\infty.
\end{split}\end{equation*}
Hence, by Lebesgue's dominated convergence theorem, we arrive at (iii).

\medskip

Now, we are to show that $u$ and $v$ are continuous functions on $\partial\mathbb{R}^{n}_{+}$ and $\overline{\mathbb{R}^{n}_{+}}$ respectively. For the sake of simplicity, we only consider the continuity of $v$. The continuity of $u$ can be derived in quite similar way.

For any given $x^1\in\overline{\mathbb{R}^n_{+}}$, $x^2\in\overline{\mathbb{R}^n_{+}}$ and arbitrary $y\in\partial\mathbb{R}^n_{+}$, it is easy to verify that
\begin{equation*}
\Big||x^1-y|^{\alpha-n}-|x^2-y|^{\alpha-n}\Big|\leq
\begin{cases}
C|x^1-x^2|^{\alpha-n}, \qquad \text{if} \,\, 0<\alpha-n<1,\\ \\
C\left(1+|y|^{\alpha-n-1}\right)|x^1-x^2|, \qquad \text{if} \,\, \alpha-n\geq1.
\end{cases}
\end{equation*}
Then, it follows that
\begin{equation*}\begin{split}
|v(x^1)-v(x^2)|&=\left|\int_{\partial \mathbb{R}^n_{+}}|x^1-y|^{\alpha-n}u^{-\theta}(y)dy
-\int_{\partial \mathbb{R}^n_{+}}|x^2-y|^{\alpha-n}u^{-\theta}(y)dy\right| \\
&\leq\int_{\partial \mathbb{R}^n_{+}}\left||x^1-y|^{\alpha-n}-|x^2-y|^{\alpha-n}\right|u^{-\theta}(y)dy \\
&\leq C\max\left\{|x^1-x^2|^{\alpha-n},|x^1-x^2|\right\}\int_{\partial \mathbb{R}^n_{+}}\left(1+|y|^{\alpha-n}\right)u^{-\theta}(y)dy.
\end{split}\end{equation*}
This implies immediately that $v\in C(\mathbb{R}^{n}_{+})$. Thus we finish the proof of Lemma \ref{lem2}.
\end{proof}

\medskip

We first show that $u-u_{z,\lambda}$ and $v-v_{z,\lambda}$ are strictly positive in a small neighborhood of $z$.
\begin{lem}\label{lemaa}
Assume that $(u,v)$ is a pair of positive Lebesgue measurable solutions on $\partial\mathbb{R}^n_{+}\times \mathbb{R}^n_{+}$ satisfying the system \eqref{intsp}. Then, there exists a $\delta_{0}$ small enough such that, for any $0<\lambda\leq\delta_{0}$, there exist constants $C_3>0$ and $C_4>0$ such that
\begin{align*}
&u(y)-u_{z,\lambda}(y)>C_3>0, \qquad \forall \, y\in B^{n-1}_{\lambda^{2}}(z)\setminus \{z\},\\
&v(x)-v_{z,\lambda}(x)>C_4>0, \qquad \forall \, x\in B^+_{\lambda^{2}}(z).
\end{align*}
\end{lem}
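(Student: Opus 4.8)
The plan is to reduce everything to the two-sided pointwise bounds from Lemma \ref{lem2}(ii). Since those give $u(y)\geq \tfrac{1}{C_1}>0$ and $v(x)\geq\tfrac{1}{C_2}>0$ for all $y\in\partial\mathbb{R}^n_+$ and $x\in\mathbb{R}^n_+$, it is enough to show that the Kelvin transforms $u_{z,\lambda}$ and $v_{z,\lambda}$ can be made uniformly small on the punctured balls $B^{n-1}_{\lambda^2}(z)\setminus\{z\}$ and $B^{+}_{\lambda^2}(z)$ by choosing $\lambda$ small; then $u-u_{z,\lambda}$ and $v-v_{z,\lambda}$ will be bounded below by $\tfrac{1}{2C_1}$ and $\tfrac{1}{2C_2}$ respectively.

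First I would fix $z\in\partial\mathbb{R}^n_+$ and $y\in B^{n-1}_{\lambda^2}(z)\setminus\{z\}$, and write $r:=|y-z|\in(0,\lambda^2)$. By the definition of the inversion, $|y^{z,\lambda}-z|=\lambda^2/r$, hence $|y^{z,\lambda}|\leq |z|+\lambda^2/r$. Using $u_{z,\lambda}(y)=\bigl(\tfrac{|y-z|}{\lambda}\bigr)^{\alpha-n}u(y^{z,\lambda})$ together with the upper bound $u(y^{z,\lambda})\leq C_1\bigl(1+|y^{z,\lambda}|^{\alpha-n}\bigr)$ from Lemma \ref{lem2}(ii), monotonicity of $t\mapsto t^{\alpha-n}$ (valid since $\alpha>n$), the cancellation $\bigl(\tfrac{r}{\lambda}\bigr)^{\alpha-n}\bigl(\tfrac{\lambda^2}{r}\bigr)^{\alpha-n}=\lambda^{\alpha-n}$, and the crude bound $r|z|+\lambda^2<\lambda^2(1+|z|)$ coming from $r<\lambda^2$, a short computation yields
$$u_{z,\lambda}(y)\leq C_1\,\lambda^{\alpha-n}\bigl(1+(1+|z|)^{\alpha-n}\bigr),$$
with a constant depending only on $z$ (not on $y$ or $\lambda$). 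Since $\alpha-n>0$, the right-hand side tends to $0$ as $\lambda\to0$, so I may pick $\delta_0=\delta_0(z)>0$ with $C_1\delta_0^{\alpha-n}\bigl(1+(1+|z|)^{\alpha-n}\bigr)\leq\tfrac{1}{2C_1}$; then for every $0<\lambda\leq\delta_0$ and $y\in B^{n-1}_{\lambda^2}(z)\setminus\{z\}$ we get $u(y)-u_{z,\lambda}(y)\geq\tfrac{1}{C_1}-\tfrac{1}{2C_1}=:C_3>0$. The argument for $v$ is identical: for $x\in B^{+}_{\lambda^2}(z)$ one automatically has $x\neq z$ (because $x_n>0=z_n$), and with $s:=|x-z|\in(0,\lambda^2)$, $|x^{z,\lambda}-z|=\lambda^2/s$, and $(x^{z,\lambda})_n=(\lambda/s)^2x_n>0$, the same manipulation gives $v_{z,\lambda}(x)\leq C_2\lambda^{\alpha-n}\bigl(1+(1+|z|)^{\alpha-n}\bigr)$; shrinking $\delta_0$ if necessary produces $v(x)-v_{z,\lambda}(x)\geq\tfrac{1}{2C_2}=:C_4>0$.

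The only mildly delicate point, and the step I expect to be the main obstacle to write carefully, is that $y^{z,\lambda}$ escapes to infinity as $y\to z$, so the bound $u(y^{z,\lambda})\leq C_1(1+|y^{z,\lambda}|^{\alpha-n})$ is itself unbounded over the punctured ball; the whole estimate survives only because of the exact cancellation between the conformal factor $(r/\lambda)^{\alpha-n}$ and the growth $(\lambda^2/r)^{\alpha-n}$ of $|y^{z,\lambda}|^{\alpha-n}$, which is precisely what converts the dangerous $r$-dependence into the harmless factor $\lambda^{\alpha-n}$. Everything else is elementary. Note also that $C_3,C_4$ here are independent of $\lambda$, while $\delta_0$ is allowed to depend on $z$, which is all that is needed for the subsequent moving-sphere argument.
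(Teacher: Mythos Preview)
Your proof is correct and follows essentially the same approach as the paper: both use the two-sided bounds of Lemma~\ref{lem2}(ii) to bound $u$ and $v$ from below by absolute constants, bound $u_{z,\lambda}$ and $v_{z,\lambda}$ from above by $C\lambda^{\alpha-n}$ via the exact cancellation $\bigl(\tfrac{r}{\lambda}\bigr)^{\alpha-n}\bigl(\tfrac{\lambda^2}{r}\bigr)^{\alpha-n}=\lambda^{\alpha-n}$, and then choose $\lambda$ small. The paper simplifies by taking $z=0$ (so $|y^{z,\lambda}|\geq1$ and $1+|y^{z,\lambda}|^{\alpha-n}\leq 2|y^{z,\lambda}|^{\alpha-n}$), whereas you keep $z$ general and track the extra $(1+|z|)^{\alpha-n}$ factor; this is a cosmetic difference only.
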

\begin{proof}
Without loss of generality, let $z=0$. For any $y\in B^{n-1}_{\lambda^{2}}(0)\setminus \{0\}$, it is easy to check that $|y^{{0,\lambda}}|\geq1$. Therefore, by Lemma \ref{lem2}, we have
\begin{equation*}\begin{split}
u_{0,\lambda}(y)&=\left(\frac{\lambda}{|y|}\right)^{n-\alpha}u(y^{0,\lambda})\\
&\leq \left(\frac{\lambda}{|y|}\right)^{n-\alpha}\cdot 2C_1|y^{0,\lambda}|^{\alpha-n}\\
&\leq 2C_1\lambda^{\alpha-n}.
\end{split}\end{equation*}
Thus there exists a $\delta_{0}>0$ sufficiently small, one can deduce that, for any $0<\lambda\leq\delta_{0}$, there exists a constant $C_3>0$ such that
\begin{equation*}
u(y)-u_{0,\lambda}(y)\geq \inf_{{B^{n-1}_{\lambda^{2}}(0)}\setminus \{0\}}u(y)-2C_1\lambda^{\alpha-n}>C_3>0, \qquad \forall \, y\in B^{n-1}_{\lambda^{2}}(0)\setminus \{0\}.
\end{equation*}

Similarly, one can derive from Lemma \ref{lem2} that, for any $x\in B^{+}_{\lambda^{2}}(0)$,
\begin{equation*}\begin{split}
v_{0,\lambda}(x)&=\left(\frac{\lambda}{|x|}\right)^{n-\alpha}v(x^{0,\lambda})\\
&\leq \left(\frac{\lambda}{|x|}\right)^{n-\alpha}\frac{2C_2}{|x^{0,\lambda}|^{n-\alpha}}\\
&\leq \frac{2C_2}{\lambda^{n-\alpha}}.
\end{split}\end{equation*}
It follows that there exists a $\delta_{0}>0$ sufficiently small such that, for any $0<\lambda\leq\delta_{0}$, there exists a constant $C_4>0$ such that
\begin{equation*}\begin{split}
v(x)-v_{0,\lambda}(x)&\geq v(x)-\frac{2C_2}{\lambda^{n-\alpha}} \\
&\geq\inf_{B^{+}_{\lambda^{2}}(0)}v(x)-2C_2\lambda^{\alpha-n} \\
&\geq C_4>0, \qquad \forall \, x\in B^{+}_{\lambda^{2}}(0).
\end{split}\end{equation*}
This completes the proof of Lemma \ref{lemaa}.
\end{proof}

\medskip

The following lemma provides a starting point for us to move the spheres.
\begin{lem}\label{lemmastart}
For every $z\in\partial\mathbb{R}^n_{+}$, there exists $\epsilon_0(z)>0$ such that, for all $\lambda\in (0,\epsilon_0(z)]$,
\begin{equation}\label{qibu1}
u(y)\geq u_{z,\lambda}(y), \qquad  \forall \, y\in B^{n-1}_{\lambda}(z)\setminus \{z\},
\end{equation}
and
\begin{equation}\label{qibu2}
v(x)\geq v_{z,\lambda}(x), \qquad \forall \, x\in B_{\lambda}^{+}(z).
\end{equation}
\end{lem}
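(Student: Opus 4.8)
The goal is a "moving-spheres start" lemma: for each fixed $z\in\partial\mathbb{R}^n_+$, when $\lambda$ is small enough, $u\geq u_{z,\lambda}$ on $B^{n-1}_\lambda(z)\setminus\{z\}$ and $v\geq v_{z,\lambda}$ on $B^+_\lambda(z)$. The natural strategy is to combine the strict positivity near $z$ already established in Lemma \ref{lemaa} with an integral narrow-region argument on the annular/spherical shell $B_\lambda(z)\setminus B_{\lambda^2}(z)$, using the representation formulas \eqref{equ11}--\eqref{equ12} and the sign of the kernel \eqref{equ224}.

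First I would normalize $z=0$ and write, for $y\in B^{n-1}_\lambda(0)\setminus\{0\}$, the identity \eqref{equ11}; the key observation is that by the conformality assumptions $k\leq\frac{n+\alpha+2\beta}{\alpha+2\beta-n}$ and $\theta\leq\frac{n+\alpha-2}{\alpha-n}$, the exponents satisfy $n+\alpha+2\beta-(\alpha+2\beta-n)k\geq 0$ and $n+\alpha-2-(\alpha-n)\theta\geq 0$, so that on $B^+_\lambda(0)$ we have $\left(\frac{\lambda}{|x-z|}\right)^{n+\alpha+2\beta-(\alpha+2\beta-n)k}\geq 1$ (and similarly for the other exponent since $|x-z|\leq\lambda$ there). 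Hence the integrands in \eqref{equ11}--\eqref{equ12} split into a manifestly nonnegative part plus an error controlled by where $v_{z,\lambda}<v$ (resp. $u_{z,\lambda}<u$) fails. Set $\Sigma^v_\lambda:=\{x\in B^+_\lambda(0):v(x)<v_{0,\lambda}(x)\}$ and $\Sigma^u_\lambda:=\{y\in B^{n-1}_\lambda(0)\setminus\{0\}:u(y)<u_{0,\lambda}(y)\}$. By Lemma \ref{lemaa}, for $\lambda\leq\delta_0$ these sets are contained in the shell $B_\lambda(0)\setminus B_{\lambda^2}(0)$, which is a narrow region when $\lambda$ is small. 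Estimating $u-u_{0,\lambda}$ from below on $\Sigma^u_\lambda$ by restricting the integral in \eqref{equ11} to $\Sigma^v_\lambda$ (where the integrand is negative) and bounding $|v^{-k}-(\cdots)v_{0,\lambda}^{-k}|\leq C(v-v_{0,\lambda})_-$ using the uniform lower bounds on $v$ from Lemma \ref{lem2}(ii), one obtains an inequality of the form
\begin{equation*}
\|(u-u_{0,\lambda})_-\|_{L^s(\Sigma^u_\lambda)}\leq C\,\Big(\sup_{\Sigma^u_\lambda}\int_{\Sigma^v_\lambda}K(0,\lambda,y,x)\,x_n^{\beta(1-k)}\,dx\Big)\,\|(v-v_{0,\lambda})_-\|_{L^{s}(\Sigma^v_\lambda)},
\end{equation*}
and symmetrically with the roles reversed; composing the two gives a self-improving inequality whose coefficient is a product of two "capacity of the shell" integrals. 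Since $K$ is integrable and the shell $B_\lambda(0)\setminus B_{\lambda^2}(0)$ shrinks to a point as $\lambda\to 0$, these integrals tend to $0$, so the product coefficient is $<1$ for $\lambda$ below some $\epsilon_0(z)$; this forces $(u-u_{0,\lambda})_-\equiv 0$ and $(v-v_{0,\lambda})_-\equiv 0$, i.e. \eqref{qibu1}--\eqref{qibu2}.

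The main obstacle is making the narrow-region estimate quantitative: one must show that $\int_{B^+_\lambda(0)\setminus B_{\lambda^2}(0)}K(0,\lambda,y,x)\,x_n^{\beta(1-k)}\,dx\to 0$ uniformly in $y\in B^{n-1}_\lambda(0)$ as $\lambda\to 0$, and likewise for the boundary integral $\int_{B^{n-1}_\lambda(0)\setminus B^{n-1}_{\lambda^2}(0)}K(0,\lambda,y,x)\,dy\to 0$ uniformly in $x\in B^+_\lambda(0)$. This requires careful use of the factorization $|x-z|^2|x^{z,\lambda}-y|^2-\lambda^2|x-y|^2=(\lambda^2-|x-z|^2)(\lambda^2-|y-z|^2)$ from the proof of Lemma \ref{lemma1} to bound $0<K(0,\lambda,y,x)\le\frac{C(\lambda^2-|x|^2)}{|x-y|^{n-\alpha+2}}\cdot(\text{something bounded})$ on the relevant region, together with the integrability of $|x-y|^{\alpha-n}$ near the diagonal and the constraint $\beta(1-k)+1>0$ which guarantees $x_n^{\beta(1-k)}$ is integrable in a half-ball. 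Once these decay estimates are in hand, the rest is the standard small-coefficient contradiction argument. Finally, an alternative to the $L^s$ formulation: one could argue pointwise by a maximum-principle-in-integral-form ("Spherically Narrow Region Maximum Principle") directly, but the integral-norm version is cleaner because it automatically handles the merely Lebesgue-measurable (non-continuous a priori, though Lemma \ref{lem2} already upgrades to continuous) setting and dovetails with the dilation-to-$\bar\lambda$ step used later in the section.
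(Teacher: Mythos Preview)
Your plan is correct and essentially identical to the paper's proof: localize the bad sets $B^-_{\lambda,u}$, $B^-_{\lambda,v}$ to the spherical shell via Lemma~\ref{lemaa}, use \eqref{equ11}--\eqref{equ12} together with the subcritical exponent conditions $k\le\frac{n+\alpha+2\beta}{\alpha+2\beta-n}$, $\theta\le\frac{n+\alpha-2}{\alpha-n}$ to obtain coupled integral inequalities for the negative parts, and show the coupling constant is $<1$ for $\lambda$ small. The paper implements this in $L^1$ with the cruder pointwise bound $K(z,\lambda,y,x)\leq\bigl|\tfrac{\lambda(x-z)}{|x-z|}-\tfrac{(y-z)|x-z|}{\lambda}\bigr|^{\alpha-n}\leq C'\lambda^{\alpha-n}$ (there is \emph{no} diagonal singularity since $\alpha>n$, so your factorization estimate and the concern about ``integrability of $|x-y|^{\alpha-n}$ near the diagonal'' are unnecessary), and then closes via $|B^{n-1}_\lambda(z)|\leq C\lambda^{n-1}$ and $\int_{B^+_\lambda(z)}x_n^{\beta(1-k)}\,dx\leq C\lambda^{n+\beta(1-k)}$.
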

\begin{proof}
For any $z\in\partial\mathbb{R}^n_{+}$ and $\lambda>0$, define
$$B_{\lambda,u}^-=\{y\in B^{n-1}_{\lambda}(z)\setminus \{z\} \mid u(y)<u_{z,\lambda}(y)\}$$
and
$$B_{\lambda,v}^-=\{x\in B_\lambda^{+}(z) \mid v(x)<v_{z,\lambda}(x)\}.$$
We start the moving sphere procedure by showing that $B_{\lambda,u}^-=B_{\lambda,v}^-=\emptyset$ for sufficiently small $\lambda$.

For any $y \in B_{\lambda,u}^-$, by using \eqref{equ11} and the positivity of $K(z,\lambda,y,x)$, we obtain from $k\leq\frac{n+\alpha+2\beta}{\alpha+2\beta-n}$ that
\begin{equation}\begin{split}\nonumber\\
0&<u_{z,\lambda}(y)-u(y)\\
&=\int_{B^{+}_{\lambda}(z)}K(z,\lambda,y,x)x_{n}^{\beta(1-k)}\left(v^{-k}(x)-\left(\frac{\lambda}{|x-z|}\right)^{n+\alpha+2\beta-(\alpha+2\beta-n)k}
v^{-k}_{z,\lambda}(x)\right)dx\\
&\leq\int_{B_{\lambda,v}^-}K(z,\lambda,y,x)x_{n}^{\beta(1-k)}\left(v^{-k}(x)-\left(\frac{\lambda}{|x-z|}\right)^{n+\alpha+2\beta-(\alpha+2\beta-n)k}
v^{-k}_{z,\lambda}(x)\right)dx\\
&\leq\int_{B_{\lambda,v}^-}K(z,\lambda,y,x)x_{n}^{\beta(1-k)}\left(v^{-k}(x)-v^{-k}_{z,\lambda}(x)\right)dx\\
&\leq k\int_{B_{\lambda,v}^-}K(z,\lambda,y,x)x_{n}^{\beta(1-k)}v^{-k-1}(x)\left(v_{z,\lambda}(x)-v(x)\right)dx.
\end{split}\end{equation}

By Lemma \ref{lemaa}, we infer that
$$B_{\lambda,v}^-\subseteq B^+_\lambda(z)\setminus B^+_{\lambda^{2}}(z).$$
Since $v>0$ is continuous in $\overline{\mathbb{R}^{n}_{+}}$, we conclude that there exists a constant $C'>0$ independent of $\lambda$ such that, for any $y\in B_{\lambda,u}^{-}$ and $x\in B_{\lambda,v}^-$,
\begin{equation}\label{fvshangjie}\begin{split}
&\quad k v^{-k-1}(x)K(z,\lambda,y,x)\\
&=k v^{-k-1}(x)\left(\left(\frac{\lambda}{|x-z|}\right)^{n-\alpha}\frac{1}
{|x^{z,\lambda}-y|^{n-\alpha}}-\frac{1}{|x-y|^{n-\alpha}}\right)\\
&=k v^{-k-1}(x)\left(\left(\frac{|x-z|}{\lambda}\right)^{\alpha-n}
\left|\frac{\lambda^2(x-z)}{|x-z|^2}-(y-z)\right|^{\alpha-n}-|x-y|^{\alpha-n}\right)\\
&=k v^{-k-1}(x)\left(\left|\frac{\lambda(x-z)}{|x-z|}-\frac{(y-z)|x-z|}{\lambda}\right|^{\alpha-n}-|x-y|^{\alpha-n}\right)\\
&\leq C'\lambda^{\alpha-n}.
\end{split}\end{equation}

Then, for sufficiently small $\lambda$, it follows from \eqref{fvshangjie} that
\begin{equation}\begin{split}\label{equ20}
\|u_{z,\lambda}-u\|_{L^1(B_{\lambda,u}^-)}&\leq C'\lambda^{\alpha-n}|B_{\lambda,u}^-|\cdot\left\|x_{n}^{\beta(1-k)}\left(v_{z,\lambda}-v\right)\right\|_{L^1(B_{\lambda,v}^-)}\\
&\leq \frac{1}{2}\left\|x_{n}^{\beta(1-k)}\left(v_{z,\lambda}-v\right)\right\|_{L^1(B_{\lambda,v}^-)}.
\end{split}\end{equation}

Similarly, from \eqref{equ12}, the positivity of $K(z,\lambda,y,x)$, $\theta\leq\frac{n+\alpha-2}{\alpha-n}$ and $\beta(1-k)>-1$, we deduce that there exists a constant $C''>0$ independent of $\lambda$ such that, for sufficiently small $\lambda$,
\begin{equation}\begin{split}\label{equ21}
\left\|x_{n}^{\beta(1-k)}\left(v_{z,\lambda}-v\right)\right\|_{L^1(B_{\lambda,v}^-)}&\leq C''\left(\int_{B_{\lambda,v}^{-}}x_{n}^{\beta(1-k)}dx\right)\cdot\|u_{z,\lambda}-u\|_{L^1(B_{\lambda,u}^-)}\\
&\leq \frac{1}{2}\|u_{z,\lambda}-u\|_{L^1(B_{\lambda,u}^-)}.
\end{split}\end{equation}

From \eqref{equ20} and \eqref{equ21}, we conclude that there exists an $\epsilon_{0}(z)$ small enough such that, for any $0<\lambda\leq\epsilon_{0}(z)$,
$$\|u_{z,\lambda}-u\|_{L^{1}(B_{\lambda,u}^-)}=\left\|x_{n}^{\beta(1-k)}\left(v_{z,\lambda}-v\right)\right\|_{L^1(B_{\lambda,v}^-)}=0.$$
Therefore, the measure of $B_{\lambda,u}^-$ and $B_{\lambda,v}^-$ must be 0, and hence $B_{\lambda,u}^-=B_{\lambda,v}^-=\emptyset$ for any $\lambda\in(0,\epsilon_{0}(z)]$. This completes the proof of Lemma \ref{lemmastart}.
\end{proof}

\medskip

We now continue the moving spheres procedure as long as \eqref{qibu1} and \eqref{qibu2} hold. For each fixed $z\in \partial\mathbb{R}^n_+$, define
\begin{equation}\label{defn}
  \bar{\lambda}(z)=\sup\{\lambda>0 \mid u\geq u_{z,\mu}\,\, \text{in} \,\, B^{n-1}_{\mu}(z)\setminus\{z\},\ \
  v\geq v_{z,\mu}\,\, \text{in} \,\, B^+_{\mu}(z),\,\, \forall \,\, 0<\mu\leq\lambda\}.
\end{equation}
By Lemma \ref{lemmastart}, $\bar{\lambda}(z)$ is well-defined and $0<\bar{\lambda}(z)\leq+\infty$ for any $z\in \partial\mathbb{R}^n_+$.

\medskip

We need the following Lemma, which is crucial in our proof.
\begin{lem}\label{lemmasequali}
Assume $\bar{z}\in \partial\mathbb{R}^n_+$ satisfies $\bar{\lambda}(\bar{z})<+\infty$, then
\begin{equation}\label{start11}
u(y)=u_{\bar{z},\bar{\lambda}(\bar{z})}(y), \qquad \forall \, y\in B^{n-1}_{\bar{\lambda}}(\bar{z})\setminus\{\bar{z}\}
\end{equation}
and
\begin{equation}\label{start12}
v(x)=v_{\bar{z},\bar{\lambda}(\bar{z})}(x), \qquad \forall \, x\in B^{+}_{\bar{\lambda}}(\bar{z}).
\end{equation}
Furthermore, we must have
\begin{equation}\label{ktheta}
k=\frac{n+\alpha+2\beta}{\alpha+2\beta-n} \quad \text{and} \quad \theta=\frac{n+\alpha-2}{\alpha-n}.
\end{equation}
\end{lem}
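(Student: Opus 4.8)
The plan is to argue by contradiction, following the classical dichotomy of the method of moving spheres, but in the integral (Kelvin-transform) framework provided by Lemma~\ref{lemma1}. Fix $\bar z$ with $\bar\lambda := \bar\lambda(\bar z) < +\infty$. By the definition \eqref{defn} and continuity (Lemma~\ref{lem2}), passing to the limit $\mu \uparrow \bar\lambda$ gives the non-strict inequalities $u \geq u_{\bar z,\bar\lambda}$ on $B^{n-1}_{\bar\lambda}(\bar z)\setminus\{\bar z\}$ and $v \geq v_{\bar z,\bar\lambda}$ on $B^+_{\bar\lambda}(\bar z)$. The heart of the matter is to show these are in fact \emph{equalities}; once that is done, the exponent identities \eqref{ktheta} will be forced by matching the scaling factors $\left(\frac{\lambda}{|x-z|}\right)^{n+\alpha+2\beta-(\alpha+2\beta-n)k}$ and $\left(\frac{\lambda}{|y-z|}\right)^{n+\alpha-2-(\alpha-n)\theta}$ in \eqref{equ11}--\eqref{equ12} against $1$, since otherwise the right-hand sides of \eqref{equ11}--\eqref{equ12} would have a definite sign that contradicts $u \equiv u_{\bar z,\bar\lambda}$, $v\equiv v_{\bar z,\bar\lambda}$.

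So suppose, for contradiction, that $u \not\equiv u_{\bar z,\bar\lambda}$ on $B^{n-1}_{\bar\lambda}(\bar z)\setminus\{\bar z\}$ (the case $v\not\equiv v_{\bar z,\bar\lambda}$ is symmetric). First I would use the strict positivity of the kernel $K(\bar z,\bar\lambda,y,x)$ from \eqref{equ224}, together with the representation \eqref{equ11}--\eqref{equ12}, to upgrade the non-strict inequalities to \emph{strict} ones in the interior: either $u > u_{\bar z,\bar\lambda}$ everywhere on $B^{n-1}_{\bar\lambda}(\bar z)\setminus\{\bar z\}$ and $v > v_{\bar z,\bar\lambda}$ everywhere on $B^+_{\bar\lambda}(\bar z)$, or one of $u - u_{\bar z,\bar\lambda}$, $v - v_{\bar z,\bar\lambda}$ vanishes identically — in which case the scaling-factor argument of the previous paragraph already yields \eqref{ktheta} and, after a short bootstrap through \eqref{equ11}--\eqref{equ12}, identity for both functions. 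In the genuinely strict case I would then show $\bar\lambda$ can be enlarged, contradicting maximality. Concretely, for $\lambda$ slightly bigger than $\bar\lambda$ I must re-run the estimates \eqref{equ20}--\eqref{equ21}: decompose the "bad sets" $B^-_{\lambda,u}, B^-_{\lambda,v}$ and show they are empty. By Lemma~\ref{lemaa} these bad sets stay away from $\bar z$, i.e. $B^-_{\lambda,v}\subseteq B^+_\lambda(\bar z)\setminus B^+_{\lambda^2}(\bar z)$, and by the strict interior positivity at $\lambda = \bar\lambda$ plus uniform continuity (Lemma~\ref{lem2}), the portion of the bad set lying inside $B^{n-1}_{\bar\lambda}(\bar z)$ (resp.\ $B^+_{\bar\lambda}(\bar z)$) is squeezed to a set of arbitrarily small measure as $\lambda \downarrow \bar\lambda$; the portion in the thin shell $B_\lambda(\bar z)\setminus B_{\bar\lambda}(\bar z)$ also has small measure. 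With the measure of $B^-_{\lambda,u}$ and $B^-_{\lambda,v}$ small, the contraction-type inequalities
\begin{equation*}
\|u_{z,\lambda}-u\|_{L^1(B^-_{\lambda,u})} \leq C\,\lambda^{\alpha-n}\,|B^-_{\lambda,u}|\cdot\big\|x_n^{\beta(1-k)}(v_{z,\lambda}-v)\big\|_{L^1(B^-_{\lambda,v})}
\end{equation*}
and its companion close up to give both norms $=0$, hence $B^-_{\lambda,u}=B^-_{\lambda,v}=\emptyset$ for $\lambda$ a bit beyond $\bar\lambda$ — contradicting \eqref{defn}. This is exactly the \emph{Spherically Narrow Region Maximum Principle} alluded to in the introduction, and it is the reason no $C^1$ regularity is needed: everything runs on $L^1$ smallness of measure.

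The main obstacle I anticipate is the bookkeeping in that last step: one must control the bad sets \emph{uniformly} as $\lambda \to \bar\lambda^+$, which requires combining three ingredients carefully — (a) Lemma~\ref{lemaa} to keep the bad set off the singularity $\bar z$, (b) the strict interior inequalities at $\lambda=\bar\lambda$ together with the continuity/decay estimates of Lemma~\ref{lem2} to make the "old" region contribute negligibly, and (c) the weight $x_n^{\beta(1-k)}$ with $\beta(1-k)+1>0$ to guarantee that $\int_{B^-_{\lambda,v}} x_n^{\beta(1-k)}\,dx \to 0$. A subtle point is that the factors $\left(\frac{\lambda}{|x-\bar z|}\right)^{n+\alpha+2\beta-(\alpha+2\beta-n)k}$ etc.\ need not be $\leq 1$ once $\lambda>\bar\lambda$ unless we are in the critical exponent case; so I expect the argument to naturally \emph{first} establish identity at $\lambda=\bar\lambda$ via the non-enlargement-means-identity dichotomy, and \emph{then} read off \eqref{ktheta}, rather than proving \eqref{ktheta} up front. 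Finally, once \eqref{start11}--\eqref{start12} and \eqref{ktheta} are in hand, plugging the functional equation $u\equiv u_{\bar z,\bar\lambda}$ (for the one special center $\bar z$) into the general machinery — or, more robustly, combining the information that \emph{some} sphere is critical at \emph{every} center $z$ with $\bar\lambda(z)<\infty$ — leads to the explicit bubble form of $u$ and $v$ stated in Theorem~\ref{theoremfenlei}; that calculus lemma (a function whose Kelvin transform about some sphere equals itself, at every admissible point, is a standard "bubble") I would quote rather than reprove.
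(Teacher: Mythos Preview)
Your proposal is correct and follows essentially the same route as the paper: assume non-identity at $\bar\lambda$, bootstrap to strict positivity via the kernel positivity in \eqref{equ11}--\eqref{equ12}, confine the bad sets $B^-_{\lambda,u}$, $B^-_{\lambda,v}$ to a thin spherical shell for $\lambda$ slightly above $\bar\lambda$, and re-run the $L^1$ contraction estimates \eqref{equ20}--\eqref{equ21} to force them empty, contradicting maximality; then read off \eqref{ktheta} from the vanishing of the integrands in \eqref{equ11}--\eqref{equ12}.

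Two small corrections. First, your appeal to Lemma~\ref{lemaa} to keep the bad sets away from $\bar z$ is misplaced: that lemma only holds for $0<\lambda\leq\delta_0$ small, whereas here $\lambda$ is near $\bar\lambda$, which may be large. The paper does not use Lemma~\ref{lemaa} at this stage; it relies solely on your ingredient (b): the strict positivity of $u-u_{\bar z,\bar\lambda}$ on the compact set $\overline{B^{n-1}_{\bar\lambda-\delta_1}(\bar z)}\setminus\{\bar z\}$, continuity in $\lambda$, and then a direct estimate of $v-v_{\bar z,\lambda}$ on $B^+_{\bar\lambda-\delta_1}(\bar z)$ via the decomposition $I_1+I_2$ of the integral \eqref{equ12}. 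This already confines both bad sets to the shell $B_\lambda\setminus B_{\bar\lambda-\delta_1}$, so no separate argument near $\bar z$ is needed. Second, your worry about the scaling factors $\left(\frac{\lambda}{|x-\bar z|}\right)^{n+\alpha+2\beta-(\alpha+2\beta-n)k}$ not being $\leq 1$ for $\lambda>\bar\lambda$ is unfounded: for $x\in B^+_\lambda(\bar z)$ one has $|x-\bar z|<\lambda$, and the exponent is nonnegative precisely by the hypothesis $k\leq\frac{n+\alpha+2\beta}{\alpha+2\beta-n}$, so the factor is always $\geq 1$ and the inequality chain from Lemma~\ref{lemmastart} goes through verbatim.
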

\begin{proof}
Without loss of generality, we may assume that $\bar{z}=0$. For simplicity of notations, we denote $\bar{\lambda}:=\bar{\lambda}(0)$.

Suppose on the contrary that $u-u_{0,\bar{\lambda}}\geq0$ in $B^{n-1}_{\bar{\lambda}}(0)\setminus\{0\}$ and $v-v_{0,\bar{\lambda}}\geq0$ in $B^+_{\bar{\lambda}}(0)$ but at least one of them is not identically zero. Without loss of generality, we may assume that $v-v_{0,\bar{\lambda}}\geq0$ but $v-v_{0,\bar{\lambda}}$ is not identically zero in $B^+_{\bar{\lambda}}(0)$. Then we will get a contradiction with the definition \eqref{defn} of $\bar{\lambda}$.

We first show that
\begin{equation}\label{positive1}
  u(y)-u_{0,\bar{\lambda}}(y)>0, \,\,\,\,\,\, \forall \, y\in B^{n-1}_{\bar{\lambda}}(0)\setminus\{0\},
\end{equation}
and
\begin{equation}\label{positive2}
  v(x)-v_{0,\bar{\lambda}}(x)>0, \,\,\,\,\,\, \forall \, x\in B^{+}_{\bar{\lambda}}(0).
\end{equation}
Indeed, choose a point $x^{0}\in B^{+}_{\bar{\lambda}}(0)$ such that $v(x^{0})-v_{0,\bar{\lambda}}(x^{0})>0$, by continuity, there exists a small $\eta>0$ and a constant $c_{0}>0$ such that
\begin{equation}\label{vbigger}
B_{\eta}(x^{0})\subset B^+_{\bar{\lambda}}(0) \,\,\,\,\,\, \text{and} \,\,\,\,\,\,
v(x)-v_{0,\bar{\lambda}}(x)\geq c_{0}>0, \,\,\,\, \forall \, x\in B_{\eta}(x^{0}).
\end{equation}

For any $y\in B^{n-1}_{\bar{\lambda}}(0)\setminus\{0\}$, since $K(0,\bar{\lambda},y,x)>0$, one can derive from \eqref{equ11} that
\begin{equation}\label{equ11aa}\begin{split}
&\quad u(y)-u_{0,\bar{\lambda}}(y)\\
&=\int_{B^{+}_{\bar{\lambda}}(0)}K(0,\bar{\lambda},y,x)x_{n}^{\beta(1-k)}\left(\left(\frac{\bar{\lambda}}{|x|}\right)^{n+\alpha+2\beta-(\alpha+2\beta-n)k}
v^{-k}_{0,\bar{\lambda}}(x)-v^{-k}(x)\right)dx\\
&\geq\int_{B_\eta(x^0)}K(0,\bar{\lambda},y,x)x_{n}^{\beta(1-k)}\left(v^{-k}_{0,\bar{\lambda}}(x)-v^{-k}(x)\right)dx\\
&>0,
\end{split}\end{equation}
thus we arrive at \eqref{positive1}. By \eqref{equ12} and the positivity of $K(0,\bar{\lambda},y,x)$, one can immediately obtain \eqref{positive2}.

Pick a small $0<\delta_1<\frac{\bar{\lambda}}{2}$ and define
\begin{equation}\label{2-40}
m_{1}:=\inf_{y\in\overline{B^{n-1}_{\bar{\lambda}-\delta_1}(0)}\setminus\{0\}}\left(u-u_{0,\bar{\lambda}}\right)(y)>0.
\end{equation}
Then, by the continuity of $u-u_{0,\lambda}$ with respect to $\lambda$, there exists a smaller $0<\delta_2<\delta_{1}$ such that, for every $\lambda\in[\bar{\lambda},\bar{\lambda}+\delta_2]$,
\begin{equation}\label{udexiajie}
  u(y)-u_{0,\lambda}(y)\geq\frac{m_{1}}{2}>0, \qquad \forall \, y\in\overline{B^{n-1}_{\bar{\lambda}-\delta_1}(0)}\setminus\{0\}.
\end{equation}

Next, we will show that there exists a constant $m_2>0$ (to be determined later) such that, for all $\lambda\in[\bar{\lambda},\bar{\lambda}+\delta_2]$,
\begin{equation}\label{vdexiajie}
v(x)-v_{0,\lambda}(x)\geq m_2>0, \qquad \forall \, x\in B^+_{\bar{\lambda}-\delta_1}(0).
\end{equation}
By \eqref{equ12} and \eqref{udexiajie}, for any $x\in B^+_{\bar{\lambda}-\delta_1}(0)$, one can deduce that
\begin{equation}\label{equ12dd}\begin{split}
&\quad v(x)-v_{0,\lambda}(x)\\
&=\int_{B^{n-1}_{\lambda}(0)}K(0,\lambda,y,x)\left(\left(\frac{\lambda}{|y|}\right)^{n+\alpha-2-(\alpha-n)\theta}
u^{-\theta}_{0,\lambda}(y)-u^{-\theta}(y)\right)dy\\
&=\int_{B^{n-1}_{\bar{\lambda}-\delta_1}(0)}K(0,\lambda,y,x)\left(\left(\frac{\lambda}{|y|}\right)^{n+\alpha-2-(\alpha-n)\theta}
u^{-\theta}_{0,\lambda}(y)-u^{-\theta}(y)\right)dy\\
&\ \ \ +\int_{B^{n-1}_{\lambda}(0)
\setminus{B^{n-1}_{\bar{\lambda}-\delta_1}(0)}}K(0,\lambda,y,x)\left(\left(\frac{\lambda}{|y|}\right)^{n+\alpha-2-(\alpha-n)\theta}
u^{-\theta}_{0,\lambda}(y)-u^{-\theta}(y)\right)dy\\
&=:I_1(x)+I_2(x).
\end{split}\end{equation}

We first take $I_1(x)$ into account. For any $ y\in \overline{B_{\bar{\lambda}-\delta_1}^{n-1}(0)}\setminus\{0\}$ and $x\in B^+_{\bar{\lambda}-\delta_1}(0)$, we know that the kernel $K(0,\lambda,y,x)>0$ for any $\lambda\in[\bar{\lambda},\bar{\lambda}+\delta_2]$. From \eqref{udexiajie}, we derive that there exists a uniform constant $m_2>0$ depending only on $n$, $\alpha$, $\theta$, $\bar{\lambda}$ and $m_{1}$ such that
\begin{equation}\label{eq-a5}
  I_1(x)\geq 2m_2>0, \qquad \forall \, x\in B^+_{\bar{\lambda}-\delta_1}(0).
\end{equation}

Now we estimate $I_2(x)$. Since $u>0$ is a continuous function on $\partial\mathbb{R}^{n}_{+}$, it follows that, for any $\lambda\in[\bar{\lambda},\bar{\lambda}+\delta_2]$,
\begin{eqnarray}\label{eq-a3}
  &&\quad K(0,\lambda,y,x)\left(\left(\frac{\lambda}{|y|}\right)^{n+\alpha-2-(\alpha-n)\theta}
u^{-\theta}_{0,\lambda}(y)-u^{-\theta}(y)\right) \\
 \nonumber &&\geq -\left(\left|\frac{\lambda x}{|x|}-\frac{y|x|}{\lambda}\right|^{\alpha-n}-|x-y|^{\alpha-n}\right)u^{-\theta}(y) \\
 \nonumber &&\geq -\left(2\bar{\lambda}\right)^{\alpha-n}u^{-\theta}(y) \\
 \nonumber &&\geq -C,  \qquad \forall \, y\in B^{n-1}_{\lambda}(0)\setminus{B^{n-1}_{\bar{\lambda}-\delta_1}(0)}, \,\, \forall \, x\in B^+_{\bar{\lambda}-\delta_1}(0),
\end{eqnarray}
where the uniform constant $C>0$ depends only on $n$, $\alpha$, $\theta$ and $\bar{\lambda}$. Consequently, we have
\begin{equation}\label{eq-a4}
  I_{2}(x)\geq -C\left|B^{n-1}_{\lambda}(0)\setminus{B^{n-1}_{\bar{\lambda}-\delta_1}(0)}\right|
  =\widetilde{C}\left[\lambda^{n-1}-(\bar{\lambda}-\delta_1)^{n-1}\right], \qquad \forall \, x\in B^+_{\bar{\lambda}-\delta_1}(0),
\end{equation}
where the uniform constant $\widetilde{C}>0$ depends only on $n$, $\alpha$, $\theta$ and $\bar{\lambda}$.

Combining \eqref{eq-a4} with \eqref{eq-a5} and \eqref{equ12dd}, we derive that, by choosing $\delta_1>0$ and hence $\delta_2\in(0,\delta_{1})$ smaller if necessary, for any $\lambda\in[\bar{\lambda},\bar{\lambda}+\delta_{2}]$,
\begin{equation}\label{equ123}\begin{split}
v_{0,\lambda}(x)-v(x)&=I_1(x)+I_2(x)\\
&\geq 2m_2-\widetilde{C}\left[\lambda^{n-1}-(\bar{\lambda}-\delta_1)^{n-1}\right] \\
&\geq 2m_2-(n-1)\widetilde{C}\lambda^{n-2}\left[\lambda-(\bar{\lambda}-\delta_1)\right] \\
&\geq 2m_2-(n-1)\widetilde{C}\left(2\bar{\lambda}\right)^{n-2}\left[\delta_1+\delta_2\right] \\
&\geq m_2>0, \qquad \forall \, x\in B^+_{\bar{\lambda}-\delta_1}(0),
\end{split}\end{equation}
thus we have arrived at \eqref{vdexiajie}. As a consequence, one has, for any $\lambda\in[\bar{\lambda},\bar{\lambda}+\delta_{2}]$,
\begin{equation}\label{eq-a6}
  B_{\lambda,u}^{-}\subseteq B^{n-1}_{\lambda}(0)\setminus{B^{n-1}_{\bar{\lambda}-\delta_1}(0)}, \qquad B_{\lambda,v}^{-}\subseteq B^{+}_{\lambda}(0)
\setminus{B^{+}_{\bar{\lambda}-\delta_1}(0)}.
\end{equation}

Therefore, being similar to the proof of Lemma \ref{lemmastart}, one can use the narrow regions $B^{n-1}_{\lambda}(0)\setminus{B^{n-1}_{\bar{\lambda}-\delta_1}(0)}$ and $B^{+}_{\lambda}(0)\setminus{B^{+}_{\bar{\lambda}-\delta_1}(0)}$ instead of $B^{n-1}_{\lambda}(0)\setminus{B^{n-1}_{\lambda^{2}}(0)}$ and $B^{+}_{\lambda}(0)\setminus{B^+_{\lambda^{2}}(0)}$ respectively, and derive estimates \eqref{equ20} and \eqref{equ21} by choosing $\delta_1>0$ and hence $\delta_2\in(0,\delta_{1})$ smaller if necessary. Hence we obtain that $B^{-}_{\lambda,u}=B^{-}_{\lambda,v}=\emptyset$ for all $\lambda\in[\bar{\lambda},\bar{\lambda}+\delta_2]$. That is, for any $\lambda\in[\bar{\lambda},\bar{\lambda}+\delta_2]$,
\begin{equation*}
  u(y)-u_{0,\lambda}(y)\geq0, \qquad \forall \, y\in B^{n-1}_{\lambda}(0)\setminus\{0\}
\end{equation*}
and
\begin{equation*}
  v(x)-v_{0,\lambda}(x)\geq0, \qquad \forall \, x\in B^{+}_{\lambda}(0),
\end{equation*}
which contradicts the definition \eqref{defn} of $\bar{\lambda}$. As a consequence, in the case $0<\bar{\lambda}<+\infty$, we must have
\begin{equation}\label{uhengdeng}
  u_{0,\bar{\lambda}}(y)=u(y), \,\,\,\,\,\, \forall \, y\in B^{n-1}_{\bar{\lambda}}(0)\setminus\{0\}
\end{equation}
and
\begin{equation}\label{vhengdeng}
  v_{0,\bar{\lambda}}(x)=v(x), \,\,\,\,\,\, \forall \, x\in B^{+}_{\bar{\lambda}}(0),
\end{equation}
which imply that \eqref{start11} and \eqref{start12} must hold.

From \eqref{equ11}, \eqref{equ12}, \eqref{start11} and \eqref{start12}, one can derive \eqref{ktheta} immediately. This completes our proof of Lemma \ref{lemmasequali}.
\end{proof}

Now we are ready to give a complete proof of Theorem \ref{theoremfenlei}.

{\bf Proof of Theorem \ref{theoremfenlei}.} We carry out the proof by discussing two different possible cases.

\emph{Case (i).}  $\bar{\lambda}(z)=+\infty$ for all $z\in \partial\mathbb{R}^{n}_+$. Therefore, for all $z\in \partial\mathbb{R}^{n}_+$ and $0<\lambda<+\infty$, we have
$$u_{z,\lambda}(y)\leq u(y), \qquad \forall \,\, y\in B^{n-1}_{\lambda}(z)\setminus\{z\}.$$
Then, by a calculus lemma (Lemma 5.7 in \cite{L}, see also Lemma 11.2 in \cite{LZH}), we conclude that $u\equiv C_{0}>0$ in $\partial\mathbb{R}^{n}_{+}$.

On the other hand, for any $z\in \partial\mathbb{R}^{n}_+$ and $0<\lambda<+\infty$, we also have
$$v_{z,\lambda}(x)\leq v(x),\ \ \ \,\,\, \forall \,\, x\in B^{+}_{\lambda}(z).$$
From a calculus lemma (Lemma 15 in \cite{Ngo2}, see also Lemma 2.2 in \cite{LZ} and Lemma 11.3 in \cite{LZH}), we deduce that $v(x)$ only depends on $x_n$. Thus, it follows from \eqref{intsp} that, for any $x\in\mathbb{R}^{n}_{+}$,
\begin{equation}\label{changshu}\begin{split}
v(x)=v(0,x_n)&=\int_{\partial\mathbb{R}^n_{+}}\frac{C_{0}^{-\theta}}{\left(|y|^2+x_n^2\right)^{\frac{n-\alpha}{2}}}dy,\\
&=\omega_{n-2}C_{0}^{-\theta}x_n^{\alpha-1}\int_0^\infty\frac{r^{n-2}}{\left(r^2+1\right)^{\frac{n-\alpha}{2}}}dr=+\infty,
\end{split}\end{equation}
which is absurd. Thus \emph{Case (i)} is impossible, we only need to consider \emph{Case (ii)}.

\medskip

\emph{Case (ii).} There exists a $\hat{z}\in\partial\mathbb{R}^n_{+}$ such that $\bar{\lambda}(\hat{z})<+\infty$. Then we can deduce from the definition of $\bar{\lambda}(\hat{z})$ that, for any $0<\lambda<\bar{\lambda}(\hat{z})$,
$$u_{\hat{z},\lambda}(y)\leq u(y), \qquad \forall \,\, y\in B^{n-1}_{\lambda}(\hat{z})\setminus\{\hat{z}\},$$
moreover, Lemma \ref{lemmasequali} indicates that $k=\frac{n+\alpha+2\beta}{\alpha+2\beta-n}$, $\theta=\frac{n+\alpha-2}{\alpha-n}$ and
\begin{equation}\label{eq-a7}
  u_{\hat{z},\bar{\lambda}(\hat{z})}(y)=u(y),  \qquad \forall \ y\in\partial\mathbb{R}^n_{+}.
\end{equation}

For any $z\in\partial\mathbb{R}^{n}_{+}$, by the definition of $\bar{\lambda}(z)$, one has, for any $0<\lambda\leq\bar{\lambda}(z)$,
$$u_{z,\lambda}(y)\leq u(y), \qquad \forall \,\, y\in B^{n-1}_{\lambda}(z)\setminus\{z\},$$
that is,
$$u(y)\leq u_{z,\lambda}(y), \qquad  \forall \,\,|y-z|\geq\lambda, \qquad \forall \,\, 0<\lambda\leq\bar{\lambda}(z).$$
It follows immediately that, for any $\lambda\in (0,\bar{\lambda}(z)]$,
\begin{eqnarray}\label{eq-a8}
  && \quad \left[\bar{\lambda}(\hat{z})\right]^{n-\alpha}u(\hat{z})=\liminf_{|y|\rightarrow+\infty}|y|^{n-\alpha}u_{\hat{z},\bar{\lambda}(\hat{z})}(y)
  =\liminf_{|y|\rightarrow+\infty}|y|^{n-\alpha}u(y) \\
 \nonumber &&\leq\liminf_{|y|\rightarrow+\infty}|y|^{n-\alpha}u_{z,\lambda}(y)=\lambda^{n-\alpha}u(z),
\end{eqnarray}
which yields that $\bar{\lambda}(z)<+\infty$ for all $z\in\partial\mathbb{R}^n_{+}$.

Then, from Lemma \ref{lemmasequali}, we infer that, for all $z\in\partial\mathbb{R}^{n}_{+}$,
\begin{equation}\label{4-3}
  u_{z,\bar{\lambda}(z)}(y)=u(y) \quad \text{and} \quad v_{z,\bar{\lambda}(z)}(x)=v(x),   \qquad \forall \ y\in\partial\mathbb{R}^n_{+}, \quad \forall \ x\in\mathbb{R}^n_{+}.
\end{equation}
Since equation \eqref{intsp} is conformally invariant, from the calculus lemma (Lemma 5.8 in \cite{L}, see also Lemma 2.5 in \cite{LZ}, Lemma 11.1 in \cite{LZH} and Theorem 1.4 in \cite{FL1}) and \eqref{4-3}, we deduce that, for any $y\in\partial\mathbb{R}^{n}_+$,
\begin{equation*}
u(y)=c_1\left(\frac{d}{1+d^{2}|y-z_0|^2}\right)^{\frac{n-\alpha}{2}} \qquad \text{and} \qquad v(y,0)=c_2\left(\frac{d}{1+d^{2}|y-z_0|^2}\right)^{\frac{n-\alpha}{2}}
\end{equation*}
for some $c_1>0$, $c_2>0$, $d>0$ and $z_0\in \partial\mathbb{R}^{n}_+$.

\medskip

Now we are to compute the best constant for inequality \eqref{dengjiabudengshi}. To this end, define
$$y^{x_0,\lambda}:=\frac{\lambda^2(y-x_0)}{|y-x_0|^2}+x_0$$
and
$$f_{x_0,\lambda}(y):=\left(\frac{\lambda}{|y-x_0|}\right)^{n+\alpha-2}f(y^{x_0,\lambda}),$$
where $y\in\overline{\mathbb{R}^n_{+}}$, $x_0=(0,-\lambda)\in \mathbb{R}^{n-1}\times \mathbb{R}^{-}$ and $\lambda>0$.
A direct computation gives
$$\|f\|_{L^{\frac{2(n-1)}{n+\alpha-2}}(\partial \mathbb R^n_+)}=\|f_{x_0,\lambda}\|_{L^{\frac{2(n-1)}{n+\alpha-2}}\left(\partial B_{\frac{\lambda}{2}}(x_1)\right)}$$
and
$$\|Tf\|_{L^{\frac{2n}{n-\alpha-2\beta}}(\mathbb R^n_+)}=\|\widehat{T}(f_{x_0,\lambda})\|_{L^{\frac{2n}{n-\alpha-2\beta}}\left(B_{\frac{\lambda}{2}}(x_1)\right)},$$
where $x_1=(0,-\frac{\lambda}{2})\in \mathbb{R}^{n-1}\times \mathbb{R}^{-}$ and
$$\widehat{T}(f_{x_0,\lambda})(\xi):=\int_{\partial B_{\frac{\lambda}{2}}(x_1)}\left(\frac{\lambda}{4}-\frac{1}{\lambda}|\xi-x_1|^{2}\right)^\beta
\frac{ f_{x_0,\lambda}(\eta)}{|\eta-\xi|^{n-\alpha}}d\eta, \qquad \forall \, \xi\in B_{\frac{\lambda}{2}}(x_1).$$
Here we have used the following point-wise identity:
\[\lambda^{-1}|\xi-x_{0}|^{2}\left(\xi^{x_{0},\lambda}\right)_{n}=\frac{\lambda^{2}}{4}-|\xi-x_{1}|^{2}, \qquad \forall \, \xi\in B_{\frac{\lambda}{2}}(x_1).\]

From Theorem \ref{theoremfenlei}, we know that
$$\hat{f}(y):=\left(\frac{\lambda}{|y-x_0|}\right)^{n+\alpha-2}, \qquad \forall \,y\in\partial \mathbb{R}^n_+$$
is an extremal function to inequality \eqref{dengjiabudengshi} for any $\lambda>0$ and $x_0=(0,-\lambda)$. For any $y\in\partial B_{\frac{\lambda}{2}}(x_1)$, we have $\hat{f}_{x_0,\lambda}(y)=1$ and hence
\begin{equation*}\begin{split}
& \quad \|T\hat{f}\|_{L^{\frac{2n}{n-\alpha-2\beta}}(\mathbb R^n_+)}=\|\widehat{T}(\hat{f}_{x_0,\lambda})\|_{L^{\frac{2n}{n-\alpha-2\beta}}\left(B_{\frac{\lambda}{2}}(x_1)\right)}\\
&=\left(\int_{ B_{\frac{\lambda}{2}}(x_1)}\left|\int_{\partial B_{\frac{\lambda}{2}}(x_1)}\left(\frac{\lambda}{4}-\frac{1}{\lambda}|\xi-x_1|^{2}\right)^\beta
\frac{\hat{f}_{x_0,\lambda}(\eta)}{|\eta-\xi|^{n-\alpha}}d\eta\right|^{\frac{2n}{n-\alpha-2\beta}}d\xi\right)^{\frac{n-\alpha-2\beta}{2n}}\\
&=\left(\frac{\lambda}{2}\right)^{\frac{n+\alpha-2}{2}}\left(\int_{ B_1}
\left|\int_{\partial B_1}\left(\frac{1-|\xi|^2}{2}\right)^\beta|\eta-\xi|^{\alpha-n}
d\eta\right|^{\frac{2n}{n-\alpha-2\beta}}d\xi\right)^{\frac{n-\alpha-2\beta}{2n}}.
\end{split}\end{equation*}
On the other hand, one has
$$\|\hat{f}\|_{L^{\frac{2(n-1)}{n+\alpha-2}}(\partial \mathbb R^n_+)}=\|\hat{f}_{x_0,\lambda}\|_{L^{\frac{2(n-1)}{n+\alpha-2}}\left(\partial B_{\frac{\lambda}{2}}(x_1)\right)}=\left(n\nu_n\right)^{\frac{n+\alpha-2}{2(n-1)}}\left(\frac{\lambda}{2}\right)^{\frac{n+\alpha-2}{2}},$$
where $\nu_{n}$ denotes the volume of the unit ball in $\mathbb{R}^{n}$. As a consequence, we have, in the conformally invariant case $p=\frac{2(n-1)}{n+\alpha-2}$ and $q=\frac{2n}{n-\alpha-2\beta}$, the best constant in inequality \eqref{dengjiabudengshi} is
\begin{equation*}\begin{split}
C^*_{n,\alpha,\beta,p}
&=\|\widehat{T}(\hat{f}_{x_0,\lambda})\|_{L^{\frac{2n}{n-\alpha-2\beta}}\left(B_{\frac{\lambda}{2}}(x_1)\right)}\cdot
\|\hat{f}_{x_0,\lambda}\|^{-1}_{L^{\frac{2(n-1)}{n+\alpha-2}}\left(\partial B_{\frac{\lambda}{2}}(x_1)\right)}\\
&=\left(n\nu_n\right)^{-\frac{n+\alpha-2}{2(n-1)}}
\left(\int_{ B_1}\left|\int_{\partial B_1}\left(\frac{1-|\xi|^2}{2}\right)^\beta|\eta-\xi|^{\alpha-n}
d\eta\right|^{\frac{2n}{n-\alpha-2\beta}}d\xi\right)^{\frac{n-\alpha-2\beta}{2n}}.
\end{split}\end{equation*}

This concludes our proof of Theorem \ref{theoremfenlei}.

\section{The proof of Theorem \ref{theorem4}}

In this section, we investigate the necessary condition for the existence of positive solutions to the following integral system:
\begin{equation}\label{integralaa}\begin{cases}
u(y)=\int_{\mathbb{R}^n_+}\frac{x_n^{\beta(1-k)}}{|x-y|^{n-\alpha}}v^{-k}(x)dx, \qquad y\in\partial\mathbb{R}^n_+,\\ \\
v(x)=\int_{\partial\mathbb{R}^n_+}\frac{1}{|x-y|^{n-\alpha}}u^{-\theta}(y) dy, \qquad x\in\mathbb{R}^n_+.
\end{cases}\end{equation}

In order to derive Theorem \ref{theorem4}, it is sufficient for us to prove the following theorem via using Pohozaev type identities.
\begin{thm}\label{theorem4sub}
For $n\geq2$, $\alpha>n$, $\beta\geq0$, $\theta>0$, $k>0$, $\theta\neq1$, $k\neq1$ satisfying $\beta(1-k)+1>0$, assume that the system \eqref{integralaa} admits a pair of positive $C^1$ solutions $(u,v)$, then a necessary condition for $\theta$ and $k$ is
$$\frac{n-1}{\theta-1}+\frac{n}{k-1}=\alpha+\beta-n.$$
\end{thm}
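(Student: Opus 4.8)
The plan is to derive an integral Pohozaev identity for system \eqref{integralaa} and read off the required relation from the scaling balance; here $(u,v)$ always denotes the given pair of positive $C^{1}$ solutions.

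\emph{Step 1 (a priori integrability).} Since $u,v$ are finite‑valued, continuous and positive, evaluating the two equations of \eqref{integralaa} at a fixed point away from the origin and using $\alpha>n$ and $\beta(1-k)+1>0$ gives, by the same reasoning as in Lemma \ref{lem2} (whose proof uses only these facts and not the relation between $\theta$ and $k$), the bounds
\[
\int_{\mathbb{R}^{n}_{+}}(1+|x|^{\alpha-n})\,x_{n}^{\beta(1-k)}v^{-k}(x)\,dx<\infty,\qquad
\int_{\partial\mathbb{R}^{n}_{+}}(1+|y|^{\alpha-n})\,u^{-\theta}(y)\,dy<\infty,
\]
together with two‑sided bounds $c^{-1}(1+|y|^{\alpha-n})\le u(y)\le c(1+|y|^{\alpha-n})$ and $c^{-1}(1+|x|^{\alpha-n})\le v(x)\le c(1+|x|^{\alpha-n})$. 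Setting
\[
I:=\int_{\mathbb{R}^{n}_{+}}\int_{\partial\mathbb{R}^{n}_{+}}\frac{x_{n}^{\beta(1-k)}}{|x-y|^{n-\alpha}}\,u^{-\theta}(y)v^{-k}(x)\,dy\,dx,
\]
the elementary estimate $|x-y|^{\alpha-n}\le C(1+|x|^{\alpha-n})(1+|y|^{\alpha-n})$ and the displayed bounds give $0<I<\infty$, and Fubini together with \eqref{integralaa} yields the identities $I=\int_{\partial\mathbb{R}^{n}_{+}}u^{1-\theta}\,dy=\int_{\mathbb{R}^{n}_{+}}x_{n}^{\beta(1-k)}v^{1-k}\,dx$.

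\emph{Step 2 (the algebraic Pohozaev identity).} Differentiating under the integral sign (legitimate since $|x-y|^{\alpha-n-1}$ is locally integrable for $\alpha>n\ge2$, the tail being controlled by Step 1) gives $\nabla_{y}u(y)=(\alpha-n)\int_{\mathbb{R}^{n}_{+}}x_{n}^{\beta(1-k)}|x-y|^{\alpha-n-2}(y-x)\,v^{-k}(x)\,dx$ and the analogous formula for $\nabla_{x}v$. Inserting these, applying Fubini, and using the pointwise identity $(y-x)\cdot y+(x-y)\cdot x=|x-y|^{2}$, which reduces the combined kernel to $|x-y|^{\alpha-n}$, I obtain
\[
\int_{\partial\mathbb{R}^{n}_{+}}u^{-\theta}(y)\,y\cdot\nabla_{y}u(y)\,dy+\int_{\mathbb{R}^{n}_{+}}x_{n}^{\beta(1-k)}v^{-k}(x)\,x\cdot\nabla_{x}v(x)\,dx=(\alpha-n)\,I .
\]
Since it is not clear a priori that the two double integrals are separately absolutely convergent, in practice this is carried out with a radial cut‑off $B_{R}$ on the outer variable, the resulting tail integrals being estimated by the weighted $L^{1}$ quantities of Step 1 and sent to $0$ along the sequence produced in Step 3.

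\emph{Step 3 (integration by parts).} For the first integral write $u^{-\theta}\nabla_{y}u=\tfrac{1}{1-\theta}\nabla_{y}(u^{1-\theta})$ and apply the divergence theorem on $B^{n-1}_{R}(0)\subset\mathbb{R}^{n-1}$, getting $\int_{B^{n-1}_{R}}y\cdot\nabla_{y}(u^{1-\theta})\,dy=R\int_{\partial B^{n-1}_{R}}u^{1-\theta}\,d\sigma-(n-1)\int_{B^{n-1}_{R}}u^{1-\theta}\,dy$. Because $\int_{0}^{\infty}\!\big(\int_{\partial B^{n-1}_{r}}u^{1-\theta}d\sigma\big)dr=I<\infty$ and $\int_{0}^{\infty}\!\big(\int_{\partial B^{+}_{r}}x_{n}^{\beta(1-k)}v^{1-k}d\sigma\big)dr=I<\infty$, and since any nonnegative $g\in L^{1}(0,\infty)$ satisfies $\liminf_{r\to\infty}rg(r)=0$, there is a common sequence $R_{j}\to\infty$ along which both boundary terms $R_{j}\int_{\partial B^{n-1}_{R_{j}}}u^{1-\theta}d\sigma$ and $R_{j}\int_{\partial B^{+}_{R_{j}}}x_{n}^{\beta(1-k)}v^{1-k}d\sigma$ tend to $0$; letting $j\to\infty$ yields $\int_{\partial\mathbb{R}^{n}_{+}}u^{-\theta}\,y\cdot\nabla_{y}u\,dy=\tfrac{n-1}{\theta-1}I$. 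For the second integral use $x_{n}^{\beta(1-k)}\,x\cdot\nabla_{x}(v^{1-k})=\mathrm{div}_{x}\big(x\,x_{n}^{\beta(1-k)}v^{1-k}\big)-(n+\beta(1-k))\,x_{n}^{\beta(1-k)}v^{1-k}$ and integrate over $B^{+}_{R}(0)$: on the flat part $\{x_{n}=0\}$ of $\partial B^{+}_{R}$ the outer normal is $-e_{n}$, so $x\cdot\nu=-x_{n}$ and the integrand carries the factor $x_{n}^{1+\beta(1-k)}\to0$ (here $1+\beta(1-k)>0$ is used), whence that contribution vanishes, while the spherical part tends to $0$ along $R_{j}$. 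This gives $\int_{\mathbb{R}^{n}_{+}}x_{n}^{\beta(1-k)}v^{-k}\,x\cdot\nabla_{x}v\,dx=\tfrac{n+\beta(1-k)}{k-1}I=\big(\tfrac{n}{k-1}-\beta\big)I$.

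\emph{Step 4 (conclusion and the main obstacle).} Combining Step 2 with the two evaluations of Step 3 gives $(\alpha-n)I=\tfrac{n-1}{\theta-1}I+\big(\tfrac{n}{k-1}-\beta\big)I$, and dividing by $I>0$ produces $\tfrac{n-1}{\theta-1}+\tfrac{n}{k-1}=\alpha+\beta-n$, which is Theorem \ref{theorem4sub} and hence Theorem \ref{theorem4}. The integrability of Step 1 and the divergence‑theorem bookkeeping of Step 3 are routine; I expect the main obstacle to be making Step 2 rigorous, namely justifying the interchange of integrations in the double integral and the finiteness of the two single integrals $\int u^{-\theta}\,y\cdot\nabla_{y}u$ and $\int x_{n}^{\beta(1-k)}v^{-k}\,x\cdot\nabla_{x}v$ when only $C^{1}$ regularity is available with no decay beyond what \eqref{integralaa} itself forces. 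I would resolve this by performing every manipulation with the radial cut‑off, bounding the error terms by the weighted integrals of Step 1, and passing to the limit along the pigeonhole sequence $R_{j}$.
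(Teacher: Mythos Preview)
Your proof is correct and follows the same Pohozaev--identity strategy as the paper: combine the radial derivatives computed directly from \eqref{integralaa} with integration by parts over balls of radius $R$, kill the spherical boundary terms along a pigeonhole sequence $R_{j}\to\infty$, and read off the balance condition from $I>0$.

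The only difference is bookkeeping of the weight $x_{n}^{\beta}$. The paper groups it with $v$ and works with the single function $x_{n}^{\beta}v$: the $\beta$ then appears on the ``kernel'' side via $x\cdot\nabla(x_{n}^{\beta}v)$, giving $(\alpha+\beta-n)I$ directly, and the integration by parts in \eqref{eq-a12} produces only the coefficient $\tfrac{n}{k-1}$. You instead keep the weight $x_{n}^{\beta(1-k)}$ explicit, obtain $(\alpha-n)I$ on the kernel side, and pick up the extra $-\beta$ from the divergence of $x\,x_{n}^{\beta(1-k)}$ in Step~3. The two routes are algebraically equivalent. Your version has the advantage of making transparent exactly where the hypothesis $\beta(1-k)+1>0$ is used---namely, to kill the flat boundary contribution $x_{n}^{1+\beta(1-k)}v^{1-k}$ as $x_{n}\downarrow0$---a point the paper passes over in silence when writing \eqref{eq-a12} (the integrand $(x_{n}^{\beta}v)^{1-k}$ there is not $C^{1}$ up to $\{x_{n}=0\}$ when $\beta(1-k)<0$, so strictly speaking one should cut off at $x_{n}>\varepsilon$ and let $\varepsilon\to0$, exactly as you indicate). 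Your concern in Step~4 about justifying Fubini is not a real obstacle: the weighted $L^{1}$ bounds of Step~1 plus $|x-y|^{\alpha-n}\le C(1+|x|^{\alpha-n})(1+|y|^{\alpha-n})$ control everything, as you already note.
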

\begin{proof}
Assume that $(u,v)\in C^{1}(\partial\mathbb{R}^n_+)\times C^{1}(\mathbb{R}^n_+)$ is a pair of positive solutions to the integral system \eqref{integralaa}, then it follows from Lemma \ref{lem2} that
\begin{equation}\label{eq-a9}
  0<\int_{\partial \mathbb{R}^n_{+}}u^{1-\theta}(y)dy\leq C_{1}\int_{\partial \mathbb{R}^n_{+}}(1+|y|^{\alpha-n})u^{-\theta}(y)dy<+\infty,
\end{equation}
\begin{equation}\label{eq-a10}
0<\int_{\mathbb{R}^n_{+}}x_n^{\beta(1-k)}v^{1-k}(x)dx\leq C_{2}\int_{\mathbb{R}^n_{+}}x_n^{\beta(1-k)}(1+|x|^{\alpha-n})v^{-k}(x)dx<+\infty,
\end{equation}
that is, $(u,x_{n}^{\beta}v)\in L^{1-\theta}(\partial\mathbb{R}^{n}_{+})\times L^{1-k}(\mathbb{R}^{n}_{+})$.

Through integrating by parts, one can derive
\begin{equation}\label{eq-a11}\begin{split}
&\quad \int_{B_R^{n-1}(0)} u^{-\theta}(y)(y\cdot\nabla u(y))dy\\
&=\frac{1}{1-\theta}\int_{B_R^{n-1}(0)} y\cdot\nabla (u^{1-\theta}(y))dy\\
&=\frac{R}{1-\theta}\int_{\partial B_R^{n-1}(0)}u^{1-\theta}(y)d\sigma-\frac{n-1}{1-\theta}\int_{B_R^{n-1}(0)} u^{1-\theta}(x)dx.
\end{split}\end{equation}
Similarly, through integrating by parts, one can also get
\begin{equation}\label{eq-a12}\begin{split}
&\quad \int_{B_R^+(0)}(x_{n}^{\beta}v)^{-k}(x)\left(x\cdot\nabla(x_{n}^{\beta}v)(x)\right)dx \\
&=\frac{R}{1-k}\int_{\partial B_R(0)\cap\mathbb{R}^{n}_{+}}\left(x_{n}^{\beta}v\right)^{1-k}(x)d\sigma-\frac{n}{1-k}\int_{B_R^+(0)}\left(x_{n}^{\beta}v\right)^{1-k}(x)dx.
\end{split}\end{equation}

It follows from $(u,x_{n}^{\beta}v)\in L^{1-\theta}(\partial\mathbb{R}^n_+)\times L^{1-k}(\mathbb{R}^n_+)$ that, there exists a sequence $\{R_{j}\}$ with $R_j\rightarrow+\infty$ such that
$$R_j\int_{\partial B_{R_j}^{n-1}(0)} u^{1-\theta}(y)d\sigma\rightarrow0, \qquad  R_j\int_{\partial B_{R_j}(0)\cap\mathbb{R}^{n}_{+}}\left(x_{n}^{\beta}v\right)^{1-k}(x)d\sigma\rightarrow0.$$
Therefore, by taking $R=R_{j}$ and letting $j\rightarrow+\infty$ in \eqref{eq-a11} and \eqref{eq-a12}, we get
\begin{equation}\label{poh}\begin{split}
&\quad \int_{\partial\mathbb{R}^n_+} u^{-\theta}(y)\left(y\cdot\nabla u(y)\right)dy+\int_{\mathbb{R}^n_+}\left(x_{n}^{\beta}v\right)^{-k}(x)\left(x\cdot\nabla\left(x_{n}^{\beta}v\right)(x)\right)dx \\
&=-\frac{n-1}{1-\theta}\int_{\partial\mathbb{R}^n_+} u^{1-\theta}(x)dx-\frac{n}{1-k}\int_{\mathbb{R}^n_+}\left(x_{n}^{\beta}v\right)^{1-k}(x)dx.
\end{split}\end{equation}

On the other hand, by using \eqref{integralaa}, one can calculate that
\begin{equation*}\begin{split}
\nabla u(y)\cdot y&=\frac{d[u(\rho y)]}{d\rho}\bigg|_{\rho=1}\\
&=-(n-\alpha)\int_{\mathbb{R}^n_+} \frac{x_n^{\beta(1-k)}}{|x-y|^{n-\alpha+2}}\left[(y-x)\cdot y\right]v^{-k}(x)dx,
\end{split}\end{equation*}
and
\begin{equation*}\begin{split}
\nabla\left(x_{n}^{\beta}v\right)(x)\cdot x&=\frac{d[\left(x_{n}^{\beta}v\right)(\rho x)]}{d\rho}\bigg|_{\rho=1}\\
&=-(n-\alpha)\int_{\partial\mathbb{R}^n_+} \frac{x_n^\beta }{|x-y|^{n-\alpha+2}}[(x-y)\cdot x]u^{-\theta}(y)dy\\
&\ \ \ \ +\beta\int_{\partial\mathbb{R}^n_+} \frac{x_n^\beta }{|x-y|^{n-\alpha}} u^{-\theta}(y)dy.
\end{split}\end{equation*}
Consequently, it follows from Fubini's theorem and \eqref{integralaa} that
\begin{equation}\label{eq-a13}\begin{split}
&\quad \int_{\partial\mathbb{R}^n_+}u^{-\theta}(y)\left(y\cdot\nabla u(y)\right)dy+\int_{\mathbb{R}^n_+}\left(x_{n}^{\beta}v\right)^{-k}(x)\left(x\cdot\nabla\left(x_{n}^{\beta}v\right)(x)\right)dx \\
&=(\alpha+\beta-n)\int_{\mathbb{R}^n_+}\int_{\partial\mathbb{R}^n_+} \frac{x_n^{\beta(1-k)}}{|x-y|^{n-\alpha}} u^{-\theta}(y)v^{-k}(x)dydx\\
&=(\alpha+\beta-n)\int_{\partial\mathbb{R}^n_+}u^{1-\theta}(y)dy\\
&=(\alpha+\beta-n)\int_{\mathbb{R}^n_+}\left(x_{n}^{\beta}v\right)^{1-k}(x)dx.
\end{split}\end{equation}

Combining \eqref{eq-a13} with \eqref{poh} implies that $\frac{n-1}{\theta-1}+\frac{n}{k-1}=\alpha+\beta-n$. This finishes our proof of Theorem \ref{theorem4sub}.
\end{proof}

\section*{Acknowledgements}
This work was completed when the third author was visiting the Department of Mathematical Science, Tsinghua University. He thanks the department for its hospitality.

\end{document}